\def\underTilde#1{{\baselineskip=0pt\vtop{\hbox{$#1$}\hbox{$\sim$}}}{}}
\def\underTilde#1{{\baselineskip=0pt\vtop{\hbox{$#1$}\hbox{$\sim$}}}{}}
\def\bSigma{\underTilde{\Sigma}}
\def\bPi{\underTilde{\Pi}}
\def\bDelta{\underTilde{\Delta}}
 \newtheorem{thm}{Theorem}[section]
 \newtheorem{cor}[thm]{Corollary}
 \newtheorem{lem}[thm]{Lemma}
\newtheorem*{theorem}{Theorem}
\newtheorem*{theorem1}{Theorem 1}
\newtheorem*{theorem2}{Theorem 2}
\newtheorem*{theorem3}{Theorem 3}
\newtheorem*{theorem4}{Theorem 4}
\newtheorem*{theorem5}{Theorem 5}
\newtheorem*{theorem6}{Theorem 6}
 \newtheorem{prop}[thm]{Proposition}
 \theoremstyle{definition}
 \newtheorem{defn}[thm]{Definition}
 \newtheorem{ex}[thm]{Example}
 \newtheorem{rem}[thm]{Remark}
 \theoremstyle{remark}
\newtheorem{claim}[thm]{Claim}
 \numberwithin{equation}{section}
\begin{document}

\title{Hierarchies of Beliefs and Measurable Uniformizations}

\author{Stuart Zoble \\ December 1, 2025}

\thanks{{\em Acknowledgments}  We'd like to thank Adam Brandenburger for many illuminating discussions about Game Theory.  We also thank Hugh Woodin and Paul Larson for helpful comments.}

\address{Department of Mathematics, Albert Nerkin School of Engineering, The Cooper Union for the Advancement of Science and Art, New York City}
\email{szoble@gmail.com \\ stuart.zoble@cooper.edu}

\keywords{Infinite Games, Strategic Form Games, Rationalizability, Epistemic Game Theory, Rationality and Common Belief in Rationality, Axiom of Determinacy, Descriptive Set Theory}

\begin{abstract}  We extend the Fundamental Theorem of Epistemic Game Theory to games with Baire class one payoffs and locally compact Polish strategy spaces, and under Projective Determinacy, to games with analytically measurable payoffs and arbitrary Polish strategy spaces.  We show that in full generality, the statement that rationalizable strategies are consistent with rationality and common belief in rationality follows from the Axiom of Real Determinacy, has a characterization in terms of real Gale-Stewart games, fails under the Continuum Hypothesis, and in the framework of interactive epistemics is equivalent to the Measurable Uniformization Principle from the Solovay model.   
\end{abstract}

\maketitle


\section{Introduction and Statement of Results}  The Fundamental Theorem of Epistemic Game Theory reconciles explicit and implicit approaches to formalizing an intuitive notion of rationality in games - namely, the idea that each player chooses a strategy to maximize their expected payoff according to some subjective belief about the other player's strategy choices, and forms this belief in a way that is consistent with the decision problem of each player being common knowledge.\footnote{For our purposes a (two-person strategic form) game is a pair $G = (\pi_1,\pi_2)$ of bounded functions $\pi_i: X_1 \times X_2 \rightarrow \mathbb{R}$ where each $X_i$ is a Polish space.  Each $\pi_i$ extends to $\Delta(X_1) \times \Delta(X_2)$, where $\Delta(X_i)$ is the space of Borel probability measures on $X_i$, by taking expectations (assuming $\pi_i$ is universally measurable).  An element of $\Delta(X_i)$ is referred to as a belief or mixed strategy. A strategy $s \in X_i$ is a best response to $\mu \in \Delta(X_j)$ if $\pi_i(s,\mu) \geq \pi_i(\bar{s},\mu)$ for every $\bar{s} \in X_i$.  In a context where $i$ is used to index a player we often use the index $j$ to represent the other player.}

The epistemic framework employs type structures\footnote{A concept originating in work of Harsanyi \cite{harsanyi}; see \cite{pac} for a philosophical discussion.} designed to explicitly model the idea that strategy choices should be rational in the sense that each player maximizes according to a belief, that each player believes the other is choosing rationally, that each player believes the other player believes this, and so on.  For our purposes, type structures are Polish spaces $T_i$, called type spaces, and homeomorphisms 
$$\lambda_i: T_i \rightarrow \Delta(X_j \times T_j)$$ where $X_i$ is the Polish strategy space of player $i$.\footnote{We will use the construction of the universal type space in \cite{brand2} in which types are explicitly coherent hierarchies of beliefs; see Definition 4.1.}  Thus a type for player $i$ is identified with a belief (probability measure) over the strategies and types of player $j$.  We say a strategy $s \in X_i$ for player $i$ is rational at $t \in T_i$ if $s$ is a best response to $\mbox{marg}_{X_j}(\lambda_i(t))$, and we let $R_i^1$ denote the set of such pairs $(s,t)$ for player $i$.  We say that $s$ satisfies second order rationality at $t$ if $s$ is rational at $t$ and $\lambda_i(t)$ concentrates on the set of pairs in $X_j \times T_j$ at which player $j$ is rational, and so on.  Setting $$R^{n+1}_i = \{ (s,t) \in R^{n}_i \ | \ \lambda_i(t)( R^{n}_j) = 1\}$$ for each player $i$ and $n > 0$, we say that a strategy $s \in X_i$ is consistent with Rationality and Common Belief in Rationality, abbreviated RCBR, if there is a type $t \in T_i$ such that $s$ satisfies $n^{th}$-order rationality at $t$ for every $n > 0$, that is $(s,t) \in R_i^{n}$ for every $n$.  Thus, RCBR requires a witness $t$, which we can think of as encoding a hierarchy of beliefs, justifying the strategy as a viable choice.

The implicit or strategic approach, on the other hand, involves various procedures for iteratively eliminating irrational or unjustified strategies from a game.  The argument is that both players are rational so irrational strategies can be dispensed with.  The players believe this about each other, so strategies that are irrational in the reduced spaces can be similarly dispensed with, and so on until the process converges.   For example, the Rationalizable strategies are obtained by iteratively eliminating never-best-responses strategies.  If $X_i^{n}$ are the strategies remaining for player $i$ after n rounds then $$X_i^{n+1} = \{ s \in X_i^n \ | \ s \mbox{ is a best response to some } \mu \in \Delta(X_j^n)\},$$ and the rationalizable strategies are those that survive every elimination round.  It should be noted that in non-compact games the process can require trans-finitely many rounds before converging, a fact first observed by Lipman in \cite{lipman}.\footnote{The concept of Rationalizability has its orgin in Bernheim \cite{bern} and Pearce \cite{pearce}.  The latter writes that the  solution concept is, "...based on three assumptions: ASSUMPTION (Al): When a player lacks an objective probability distribution over another player's choice of strategy, he forms a subjective prior that does not contradict any of the information at his disposal. ASSUMPTION (A2): Each player maximizes his expected utility relative to his subjective priors regarding the strategic choices of others. ASSUMPTION (A3): The structure of the game (including all participants' strategies and payoffs, and the fact that each player satisfies Assumptions (A1) and (A2)) is common knowledge..."}  For another example, the Iteratively Undominated or IU strategies are obtained in similar fashion by removing strategies at each stage that are strictly dominated.\footnote{A strategy $s \in X_i$ is strictly dominated if there is $\mu \in \Delta(X_i)$ if $\pi_i(s,t) < \pi_i(\mu,t)$ for every $t \in X_j.$}

The fundamental theorem holds that in finite games (or games with compact strategy spaces and continuous payoffs), the two approaches coincide, specifically the rationalizable strategies, either in the more or less stringent sense (best response relative to all alternatives or relative to remaining alternatives)\footnote{For every $s \in X_i$ \ $\pi(s,\mu) \geq \pi(\bar{s},\mu)$, or for every $s \in X_i^n$ \ $\pi(s,\mu) \geq \pi(\bar{s},\mu)$.}, and the IU strategies\footnote{See Prop. 2.3 for a proof that never-best-response strategies are strictly dominated in the compact and continuous case (the reverse implication is immediate).  This often cited part of the equivalence holds for finite games but does not generalize; see \cite{cz}.}, both have an epistemic basis in terms of RCBR.  In \cite{brand}, {\em The Language of Game Theory}, Brandenburger writes:\footnote{See also "Theorem 1 (Fundamental Theorem of Epistemic Game Theory)" of \cite{perea}, as well as the handbook article \cite{dekel} and the book \cite{perea2}.}


\begin{quote}  "Given a game and an IU strategy for each player, we can build a belief model so that these strategies are consistent with RCBR in the model.  With this, we can say that the epistemic condition of RCBR not only implies that IU strategies will be be played, but actually identifies this set (and no smaller set) of strategies. We get a characterization of the epistemic condition of RCBR.  This characterization result - which we call the Fundamental Theorem of EGT [epistemic game theory] - has been proved a number of times in a variety of different forms." 
\end{quote} 

Note that if $R_i$ denotes the set of strategies for player $i$ which are consistent with RCBR, and $s \in R_i$, then $s$ is a best response to a belief $\mu$ with $\mu(R_j) = 1$.  The strategy rectangle $R_i \times R_j$ is said to have the best response property and is therefore easily seen by induction to survive the procedures of the implicit approaches (in fact the rationalizable strategies are characterized as the maximal strategy rectangle with the best response property).  Thus, the non-trivial direction in the fundamental theorem involves showing that strategies that survive the elimination procedures have a type witnessing RCBR.  

\begin{theorem} (Fundamental Theorem of EGT) The Rationalizable strategies and the Iteratively Undominated strategies, either in the more or less stringent sense, coincide with the strategies which are consistent with Rationality and Common Belief in Rationality in games with continuous payoffs and compact Polish strategy spaces.\footnote{This result has its origins in Tan and Werlang \cite{tw} and Brandenburger and Dekel \cite{brand4} (see the comments by Perea in \cite{perea}), but the statement doesn't seem to appear in the early literature in this form.  We prove this in Theorem 2.10 below; see also Theorem 3.6 for a more abstract version.}
\end{theorem}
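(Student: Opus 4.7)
My plan is to decompose the theorem into three tasks: (i) the easy containment that every strategy consistent with RCBR survives each elimination procedure; (ii) the equivalence in the compact continuous setting between never-best-response and strict domination, which identifies the IU and Rationalizable sets and collapses the distinction between the more and less stringent versions; and (iii) the substantive direction, producing from each rationalizable strategy a type in the universal type space of \cite{brand2} at which it satisfies $n^{th}$-order rationality for every $n$.

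For (i), as remarked just before the theorem statement, if $s \in R_i$ is witnessed by a type $t$ with $(s,t) \in R_i^n$ for all $n$, then $s$ is a best response to a belief $\mu \in \Delta(X_j)$ with $\mu(R_j) = 1$; hence $R_1 \times R_2$ has the best response property and an obvious induction places $R_i$ inside every $X_i^n$. Task (ii) is Proposition 2.3, whose proof uses Hahn--Banach separation on the Banach space of continuous functions on the opponent's compact strategy space; this equates the elimination rounds for the two procedures and shows the two stringency variants agree on the common fixed point.

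For (iii), let $R_i^\infty$ denote the Rationalizable set of player $i$, which is nonempty and compact, being the intersection of a decreasing sequence of nonempty compact sets (the iteration terminates at stage $\omega$ by compactness). By the best response property of the rectangle $R_1^\infty \times R_2^\infty$ the correspondence
$$B_i(s) = \{ \mu \in \Delta(R_j^\infty) : s \text{ is a best response to } \mu \}$$
is nonempty-valued, and continuity of $\pi_i$ together with compactness of $\Delta(R_j^\infty)$ give it closed graph. The Kuratowski--Ryll-Nardzewski selection theorem therefore supplies a Borel measurable selector $\beta_i : R_i^\infty \to \Delta(R_j^\infty)$. Given $s \in R_i^\infty$ I then build a coherent hierarchy of beliefs $t(s) = (\mu_s^1, \mu_s^2, \ldots)$ by setting $\mu_s^1 = \beta_i(s)$ and letting $\mu_s^{n+1}$ be the pushforward of $\beta_i(s)$ under the measurable map $s' \mapsto (s', \mu^1_{s'}, \ldots, \mu^n_{s'})$. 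Coherence is immediate from the construction, so $t(s)$ defines a point in the universal type space of \cite{brand2}, and a routine induction on $n$, using that $\beta_i(s)$ concentrates on $R_j^\infty$ and that each $s' \in R_j^\infty$ is itself equipped with the type $t(s')$, shows $(s,t(s)) \in R_i^n$ for all $n$.

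The main obstacle is the measurable selection $\beta_i$ and the verification that the resulting hierarchy is a bona fide Borel object in the universal type space: one needs $s \mapsto \beta_i(s)$ to be Borel in the weak-$*$ topology on $\Delta(R_j^\infty)$, and each pushforward step to preserve measurability so that coherence holds at the level of measurable structure. In the compact continuous setting these are routine consequences of upper semicontinuity of the best response correspondence and continuity of the pushforward operation, but they are precisely the steps whose breakdown in non-compact or discontinuous settings motivates the descriptive set-theoretic extensions pursued in the rest of the paper.
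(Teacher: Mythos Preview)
Your proof is correct, but for part (iii) you take a different route than the paper's main argument in Theorem~2.10. The paper does not build the witnessing type explicitly; instead it defines auxiliary sets $\bar{R}_i^n \subseteq X_i \times T_i$ (rationality restricted to beliefs concentrating on the rationalizable rectangle), shows by induction that each is closed with $\mathrm{proj}_{X_i}(\bar{R}_i^n) = \text{RAT}_i(G)$ using Lubin's measure extension lemma (Lemma~2.11) to lift a justifying belief $\mu$ to $\nu \in \Delta(X_j \times T_j)$ with $\nu(\bar{R}_j^n)=1$, and then invokes compactness of the decreasing sections $\{t : (s,t) \in \bar{R}_i^n\}$ to extract a single type in the intersection. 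This argument uses only the existence of the homeomorphisms $\lambda_i$ and never looks inside the type space. Your approach---a Borel selector $\beta_i$ from the best-response correspondence via Kuratowski--Ryll-Nardzewski, followed by an explicit pushforward construction of the coherent hierarchy---is precisely the alternative the paper records in Remark~2.13 and works out in Remark~4.11; it is more constructive but requires the explicit belief-hierarchy realization of $T_i$ from Definition~4.1 to verify common certainty of coherency. Two minor points: the paper proves Proposition~2.3 via an auxiliary zero-sum game and Glicksberg's equilibrium theorem rather than Hahn--Banach, and the identification $\text{RAT}=\text{MRAT}$ is argued directly (a global best response to any $\mu \in \Delta(\text{RAT}_j^\beta)$ already lies in $\text{RAT}_i^\beta$), not as a corollary of Proposition~2.3.
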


Our motivation here is to extend this theorem in the setting of infinite (more precisely, non-compact) games.  As we pass to infinite strategy spaces there is a divergence in the implicit approaches on complexity grounds alone.  In the case of integer games, as the prototypical non-compact case, while the epistemic notion is analytic, the solution concepts based on iterative deletion procedures (specifically those that are memoryless - that is, relative to surviving alternatives - or those based on notions of dominance) are generally neither analytic nor co-analytic with the exception of the notion of Rationalizability in the more stringent sense (see \cite{cz} for this analysis).  With respect to Rationalizability in the more stringent sense, henceforth just Rationalizability, Arieli established the equivalence with RCBR in the category of continuous games in \cite{arieli}.

\begin{theorem} (Arieli) Rationalizability is equivalent to Rationality and Common Belief in Rationality for any game with continuous bounded payoffs and Polish strategy spaces. \end{theorem}

We will generalize this equivalence in the abstract setting of what we call interactive epistemics, with relations between strategies and beliefs of the form $$\text{E}_i \subseteq X_i \times \Delta(X_j)$$ as the primary object of study, and with games as the motivating application via the best response relation $$\text{E}^{G}_i = \{ (s,\mu) \ | \ s \mbox{ is a best response to } \mu \mbox{ in G}\} \subseteq X_i \times \Delta(X_j),$$ by defining the notions of E-Rationalizable strategies, $$\text{RAT}(\text{E}) \subseteq X_1 \times X_2,$$ and strategies consitent with "E and Common Belief in E" $$\text{RCBR}(\text{E}) \subseteq X_1 \times X_2,$$ for such relations.\footnote{When $\text{E} = \text{E}^{\text{G}}$ is the best response relation of a game $\text{G}$ we have $\text{RAT}(\text{G}) = \text{RAT}(\text{E})$ and $\text{RCBR}(\text{G}) = \text{RCBR}(\text{E})$ are the rationalizable and RCBR strategies of the game.}  

The Fundamental Theorem in this setting asserts that E-rationalizable strategies satisfy E and common belief in E when E is a pair of closed relations and the underlying Polish spaces are compact (see Theorem 3.6).   Arieli's theorem can be reformulated and understood as the analytic case in this framework.  We will prove that 
$$\text{RAT}(\text{E}) = \text{RCBR}(\text{E})$$ for $\text{E} \in \bSigma^1_1$, and for $\text{E} \in \bPi^1_2$ assuming $\bPi^1_3$-Determinacy, allowing us to extend the fundamental theorem to a wider class of games.\footnote{We will show that $\text{E}^{\text{G}}$ is Borel for Baire class one payoff functions on locally compact Polish spaces (Threorem 7.9), and that it can be $\bPi^1_1$-complete for Baire class 2 payoffs (Proposition 7.17).  We seem to need $\bPi^1_3$-determinacy even for Borel measurable games, as we need $\bPi^1_3$ to be a ranked class, and for $\bSigma^1_3$ relations to have measurable uniformizations (see Theorem 7.8 and Theorem 7.14). }

\begin{theorem1} Rationalizability is equivalent to Rationality and Common Belief in Rationality for any game with bounded Baire class one payoffs played on locally compact Polish strategy spaces.  \end{theorem1}

\begin{theorem2} Assuming $\bPi^1_3$-Determinacy, Rationalizability is equivalent to Rationality and Common Belief in Rationality for any game with bounded analytically measurable payoffs played on Polish strategy spaces.  Under Projective Determinacy, the equivalence holds for games with bounded projective payoffs played on Polish strategy spaces. 
\end{theorem2}

This equivalence can be understood in full generality as a kind of regularity property, and is fundamentally connected with consequences of Determinacy Axioms, specifically measurable uniformization.  Indeed, a determinacy argument, which we now sketch, yields a third equivalent characterization of the intuitive notion of rationality in terms of the existence of winning strategies in Gale-Stewart games.  

Given a strategy $s \in X_i$ of the underlying game, consider a real Gale-Stewart game $G_s$ where player $I$ attempts to justify $s$ by playing pairs $(\mu_{2n},b_{2n})$ at even stages, and player $II$ challenges player I by playing $s_{2n+1}$ at odd stages, under the requirements that $b_{2n}$ is Borel with $\mu_{2n}(b_{2n}) = 1$, $s_{2n+1} \in b_{2n}$ and $s_{2n+1}$ is a best response to $\mu_{2n+2}$ (with $s$ a best response to $\mu_0$ at the beginning), with player $II$ winning if player $I$ is unable to play at any point.  
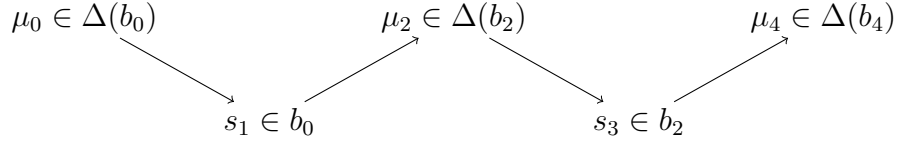
\begin{figure}[h]
\centering
    \begin{tikzpicture}[%
            node distance = 12mm,
            state/.style = {%
                   inner sep = 0pt,
                minimum size = 5mm,
                thick,
            },
            beta/.style = {%
            node distance = 2mm,
                inner sep = 1pt,
            },
            auto,
        ]

        \node (a2) [state] {$\mu_0 \in \Delta(b_0)$};
        \node (a3) [below right=of a2,state] {$s_1 \in b_0$};
         \node (a4) [above right=of a3,state] {$\mu_2 \in \Delta(b_2)$};
                 \node (a5) [below right=of a4,state] {$s_3 \in b_2$};
         \node (a6) [above right=of a5,state] {$\mu_4 \in \Delta(b_4)$};
           \draw [->] (a2) -- (a3);
            \draw [->] (a3) -- (a4);
             \draw [->] (a4) -- (a5);
              \draw [->] (a5) -- (a6);

        \end{tikzpicture}
\caption{The Game $G_s$} 
\end{figure}

This game is quasi-determined with player $I$ having a winning quasi-strategy if and only if $s$ is Rationalizable.  Under Determinacy, player $I$ can be assumed to have a winning strategy and, as all sets are measurable, such a strategy can naturally be converted into a coherent hierarchy of beliefs (a type) witnessing that $s$ is consistent with RCBR. 

A more direct argument for the equivalence uses uniformization and measurability hypotheses which are consequences of $\mathsf{AD}_{\mathbb{R}}$.  As mentioned, the rationalizable strategies of a game can be characterized as the largest set of strategies with the property that each strategy for player $i$ in the set - call these $S_i \subseteq X_i$ - is a best response to a belief concentrating on the strategies for the other player in the set.  Given a uniformization $$\mu^i:S_i \rightarrow \Delta(S_j)$$ of this best response relation in strategy-belief space for each player $i$, we can lift, under measurability assumptions, to a map $$t^i:S_i \rightarrow T_i$$ which associates a type for player $i$ to each rationalizable strategy for that player in such a way that player $i$ can play a rationalizable strategy $s \in S_i$ believing at $t^i(s)$ that player $j$ plays some rationalizable strategy $\bar{s} \in S_j$, and has type given by $t^j(\bar{s})$, according to the distribution $\mu^i(s)$ over such $\bar{s}$.  In other words,  
$$\mbox{marg}_{X_j}(\lambda_i(s)) = \mu^i(s) \mbox{ and }
\lambda_i(t^i(s))(t^j) = 1$$ 
\noindent for each player $i$ and $s \in R_i$ 
where $t^j$ is identified with its graph.  This is sufficient for RCBR to hold for the Rationalizable strategies.

We will reduce the hypothesis for the argument above to the Measure Uniformization Principle $\mathsf{MUP}$ from the Solovay model (see Theorem 1 of \cite{solovay} and also \cite{rais}):

\begin{quote} ($\mathsf{MUP}$) A family $\{A_x \ | \ x \in B \}$ of nonempty sets of reals indexed by a set $B \subseteq \mathbb{R}$ of positive Lebesgue measure can be uniformized almost everywhere by a Borel function,
\end{quote}

\noindent which we show is equivalent to a measure extension axiom $\mathsf{MEA}$ abstracted from a paper of Lubin \cite{lubin}, used in the analytic case in \cite{arieli}:
\begin{quote} ($\mathsf{MEA}$) If $X,Y$ are Polish space, $A \subseteq X \times Y$ and $\mu \in \Delta(X)$ with $\mu(\mbox{proj}_{X}(A)) = 1$ then there is $\nu \in \Delta(X \times Y)$ with $\nu(A) = 1$ and $\mu = \mbox{marg}_X(\mu)$. 
\end{quote}

We show that this hypothesis is optimal in the setting of interactive epistemics, and entails an additional characterization in terms of winning strategies in real Gale-Stewart games.  A key point is that a measurable winning strategy in $G_s^{\text{E}}$ gives rise to a type that is a witness to $s \in \text{RCBR}(\text{E})$, whereas $s \in \text{RAT}(\text{E})$ is equivalent to the existence of a winning quasi-strategy.

\begin{theorem3}  The following consequences of $\mathsf{AD}_{\mathbb{R}}$ are equivalent.
\begin{enumerate}
    \item The Measure Uniformization Principle $\mathsf{MUP}$
    \item The Measure Extension Axiom $\mathsf{MEA}$
    \item $\text{RAT}(\text{E}) = \text{RCBR}(\text{E})$ for 
    for all relations $E$ between strategies and beliefs on underlying Polish spaces
    \item All sets are Lebesgue measurable, and for any relations $E$ between strategies and beliefs on underlying Polish spaces, a strategy $s$ belongs to $\text{RAT}_i(\text{E})$ if and only if player $I$ has a winning strategy the game $G^{\text{E}}_s$.
\end{enumerate}
\end{theorem3}
\begin{theorem4}  Under the Measure Uniformization Principle, Rationalizablity is equivalent to Rationality and Common Belief in Rationality in any game with bounded payoffs and Polish strategy spaces.  Both properties are characterized by player I having a winning strategy in the auxiliary Gale-Stewart game.
\end{theorem4}

Finally, we will give a counterexample under the Continuum Hypothesis, and conclude that the equivalence, even for universally measurable best-response relations, is independent of set theory without the axiom of choice.

\begin{theorem5}  Assuming the Continuum Hypothesis there is a game with Polish strategy spaces, bounded payoffs, and universally measurable best response relations with a rationalizable strategy that is not consistent with rationality and common belief in rationality.  \end{theorem5}
This hypothesis can be reduced to the existence of a non-measurable set if we put no restrictions on the game.  That is, under the assumption that rationalizable strategies are consistent with rationality and common belief in rationality in any game with bounded payoffs played on Polish strategy spaces, we show that all sets of reals are Lebesgue measurable.  

\begin{theorem6}  The statement that Rationalizablity is equivalent to Rationality and Common Belief in Rationality in any game with bounded payoffs and Polish strategy spaces is equiconsistent with the existence of an inaccessible cardinal.  \end{theorem6}

\section{Background and the Fundamental Theorem of EGT}  A basic framework for a decision problem consists of a strategy or choice space $X$, a space of uncertainty $Y$, and a bounded payoff function $$\pi:X \times Y \rightarrow \mathbb{R}.$$  We will always assume that $X$ and $Y$ are Polish.   We extend $\pi$ to $$\pi:\Delta(X) \times \Delta(Y) \rightarrow \mathbb{R}$$ by taking expectations, that is, $$\pi(\mu,\nu) = \int \int \pi \ d\mu \ d \nu,$$  
where $\Delta(X)$ and $\Delta(Y)$ denote the space of Borel probability measures on $X,Y$ respectively,  assuming $\pi$ is universally measurable.

\begin{defn}  We say a strategy $s \in X$ is a best response strategy if there is $\mu \in \Delta(Y)$ such that $$\pi(s,\mu) \geq \pi(t,\mu)$$
\noindent for every $t \in X$.  Otherwise we call $s$ a never-best-response strategy.  We say that a strategy $s \in X$ is strictly dominated if there is $\mu \in \Delta(X)$ such that $$\pi(s,y) < \pi(\mu,y)$$ \noindent for every $y \in Y$. \end{defn}
 
\begin{defn}   We call a pair $G = (\pi_1,\pi_2)$
a (two-person strategic form) game if each $\pi_i$ is a bounded real-valued function defined on $X_1 \times X_2$ for Polish spaces $X_1$ and $X_2$.  
Each $\pi_i$ extends to $\Delta(X_1) \times \Delta(X_2)$ by taking expectations (under the assumption that $\pi_i$ is universally measurable, which will always be the case in what follows).  The space $X_i$ is the strategy space for player $i$, and an element of $\Delta(X_i)$ is referred to as a belief or mixed strategy. In a context where $i$ is used to index a player, we often use the index $j$ to represent the other player (and we sometimes regard each $\pi_i$ as being defined on $X_i \times X_j$ as it is more convenient).  A pair $(\sigma_1,\sigma_2)$ with each $\sigma_i \in \Delta(X_i)$is a Nash equilibrium of a game $G$ if 
$$\pi_1(\sigma_1,\sigma_2) \geq \pi_1(\mu,\sigma_2) \mbox { and } \pi_2(\sigma_1,\sigma_2) \geq \pi_2(\sigma_1,\nu)$$
\noindent for every $\mu \in \Delta(X_1)$ and $ \nu \in \Delta(X_2)$. 
Finally, regarding $X_j$ and the space of uncertainty for player $i$, the notions from Definition 2.1 carry over to games.  \end{defn}

\begin{prop}  Suppose $X,Y$ are compact and $\pi \in C(X \times Y)$.  A strategy $s \in X$ is a never-best response for the decision problem $\pi$ if and only if $s$ is strictly dominated.\footnote{This notion traces back to \cite{arrow} and 3.2.1 of \cite{vandamme}. See also \cite{pearce} for the finite case, and \cite{zimper} for the compact case.} \end{prop}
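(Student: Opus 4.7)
The easy direction is that strict dominance implies never-best-response: if $\mu \in \Delta(X)$ satisfies $\pi(s,y) < \pi(\mu,y)$ for every $y \in Y$, then integrating against any $\nu \in \Delta(Y)$ gives $\pi(s,\nu) < \pi(\mu,\nu)$, so $s$ cannot be a best response to any belief. The content of the proposition lies in the converse, and my plan is to prove it by a Hahn--Banach separation argument in the Banach space $C(Y)$.

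The plan is to introduce the convex set
\[
K = \{\pi(\mu,\cdot) - \pi(s,\cdot) \ | \ \mu \in \Delta(X)\} \subseteq C(Y),
\]
which is convex because $\mu \mapsto \pi(\mu,\cdot)$ is affine, and the open convex cone
\[
P = \{f \in C(Y) \ | \ f(y) > 0 \text{ for all } y \in Y\}.
\]
By construction, $s$ is strictly dominated if and only if $K \cap P \neq \emptyset$. So assume $s$ is not strictly dominated; then $K$ and $P$ are disjoint convex subsets of $C(Y)$ with $P$ open, and the geometric Hahn--Banach theorem produces a nonzero continuous linear functional $\ell$ on $C(Y)$ and a constant $c$ with $\ell(f) \leq c$ for all $f \in K$ and $\ell(g) \geq c$ for all $g \in P$. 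Taking $\mu = \delta_s$ shows $0 \in K$, and $0 \in \overline{P}$, so $c = 0$.

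Next I would convert $\ell$ into a probability measure. Since $\ell \geq 0$ on $P$, it is nonnegative on the positive cone $\overline{P}$, so $\ell$ is a positive linear functional on $C(Y)$ with $Y$ compact; the Riesz representation theorem then provides a finite positive Borel measure $\nu_0$ on $Y$ with $\ell(f) = \int f \, d\nu_0$, and $\nu_0 \neq 0$ because $\ell \neq 0$. Normalizing, put $\nu = \nu_0 / \nu_0(Y) \in \Delta(Y)$. The inequality $\ell(f) \leq 0$ on $K$ translates into
\[
\int \pi(\mu,y) - \pi(s,y) \, d\nu(y) \leq 0, \quad \text{i.e.,} \quad \pi(\mu,\nu) \leq \pi(s,\nu),
\]
for every $\mu \in \Delta(X)$, and in particular for every $\mu = \delta_t$ with $t \in X$. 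Thus $s$ is a best response to $\nu$, contradicting the assumption that $s$ is a never-best-response.

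The main obstacle is essentially the passage from the abstract separating functional back to a genuine probability measure witnessing that $s$ is a best response. This relies crucially on compactness of $Y$ and continuity of $\pi$: compactness makes Riesz representation available and keeps $K$ a well-behaved subset of $C(Y)$, while continuity ensures $\pi(\mu,\cdot) \in C(Y)$ so that we can work inside $C(Y)$ in the first place. Both hypotheses are needed; without them one passes to noncompact or non-continuous settings where this equivalence is known to fail, as the paper notes in its discussion of the infinite-game case (e.g.\ \cite{cz}).
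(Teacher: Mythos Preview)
Your argument is correct. The paper, however, takes a different route: following Pearce, it forms the auxiliary zero-sum game with payoff $\bar{\pi}(s,t)=\pi(s,t)-\pi(s_0,t)$, invokes Glicksberg's Nash existence theorem to obtain an equilibrium $(\sigma,\tau)$, and then reads off directly that $\sigma$ strictly dominates $s_0$. So the paper produces the dominating mixed strategy from a fixed-point theorem, whereas you produce the justifying belief $\nu$ from Hahn--Banach separation plus Riesz representation, proving the contrapositive. Both are classical and the two lines of argument are well known to be dual routes to minimax-type equivalences. Your approach is arguably more self-contained, since it avoids the Kakutani/Glicksberg machinery and isolates exactly where compactness of $Y$ and continuity of $\pi$ enter (namely, so that $K\subseteq C(Y)$ and Riesz applies); the paper's approach has the virtue of exhibiting the dominating strategy $\sigma$ explicitly as an equilibrium object, which is conceptually pleasant but hides the convexity/separation content behind equilibrium existence.
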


\begin{proof}  Let $s_0 \in X$ be a strategy in the decision problem $\pi$ which is a never-best-response.  Here we adapt Pearce's argument (see Lemma 3 of \cite{pearce}) from the finite to show that $s_0$ must be strictly dominated.  We define an auxiliary game 
$(\bar{\pi}_{X},\bar{\pi}_{Y})$ as follows.  Let $\bar{pi}_{X}(s,t) = \pi(s,t) - \pi(s_0,t)$ so that 
$$\bar{\pi}_{X}(\sigma,\tau) = \pi_X(\sigma,\tau)-\pi_X(s_0,\tau)$$ for mixed strategies $\sigma \in \Delta(X)$ and $\tau \in \Delta(Y)$.  We set 
$$\bar{\pi}_{Y} = - \bar{\pi}_{X}.$$  
\noindent Now let $(\sigma,\tau)$ be a Nash equilibrium of this game, which exists by the Glicksberg generalization of the Kakutani Fixed Point Theorem (see section 2 of \cite{glicksberg}).  It is easy to see that $\sigma$ strictly dominates $s_0$.  For any $t \in Y$ we have $$\bar{\pi}_{X}(\sigma,t) \geq \bar{\pi}_{X}(\sigma,\tau)$$ by the Nash condition for the $Y$ player, and 
$$\bar{\pi}_{X}(\sigma,\tau) > \bar{\pi}_{X}(s_0,\tau) = 0$$ by the Nash condition for the $X$ player and assumption on $s_0$.  Thus $\pi(\sigma,t) > \pi(s_0,t)$ as desired. \end{proof}

We now define several solution concepts obtained by iteratively eliminating unjustified or irrational strategies.
On the one hand we consider strategies that survive trans-finite iterated deletion of never-best-responses relative to all alternatives, which we call the more stringent procedure, and those that survive iterated deletion of never-best-responses relative only to surviving alternatives, which we call the less stringent, or memory-less, procedure.  These are variations on the concept of Rationalizability, which has its origins in Bernheim \cite{bern} and Pearce \cite{pearce}. It was observed by Lipman in \cite{lipman} that such iterated deletion procedures could require trans-finitely many steps in order to converge in the non-compact setting.

\begin{defn}  For a game $G = (\pi_1,\pi_2)$, the respective successor-stage survival conditions for rationalizability in the more stringent sense ($\text{RAT}$) and in the less stringent sense ($\text{MRAT}$ with $\text{M}$ standing for "memory-less") are 

\begin{equation} \tag{\text{RAT}} 
\begin{multlined} X^{\alpha+1}_i = \{ s \in X^{\alpha}_i \ | \ \exists \mu \in \Delta(X_j)  \ \ \mu(X_j^{\alpha}) = 1   
\mbox{ and } \\ \forall t \in X_{i}^{0} \ \pi_i(s,\mu) \geq \pi_i(t,\mu) \}
\end{multlined}
\end{equation}
\begin{equation} \tag{\text{MRAT}} 
\begin{multlined} X^{\alpha+1}_i = \{ s \in X^{\alpha}_i \ |  \ \exists \mu \in \Delta(X_j)  \ \ \mu(X_j^{\alpha}) = 1  
\mbox{ and }  \\ \forall t \in X_{i}^{\alpha} \ \pi_i(s,\mu) \geq \pi_i(t,\mu) \}
\end{multlined}
\end{equation}
\noindent where $X_i = X_i^0$ is the starting strategy set for player $i$, and $j$ denotes the other player.
Intersections are taken at limit stages, $$X_i^{\alpha} = \bigcap_{\beta < \alpha} X_i^{\beta}.$$
\noindent If $\alpha$ is least such that $X_i^{\alpha+1} = X_i^{\alpha}$ for both $i$ we say $X^{\alpha}_i$ are the strategies that survive the procedure for player $i$ and define the associated solution concept $\text{C}$ (either $C = \text{RAT}$ or $C = \text{MRAT}$) by $$\text{C}(G)_i = X_{i}^{\alpha}.$$ 
We can use $\alpha(\text{C}(G))$ to denote this ordinal and $\text{C}(G)^{\beta}_i$ to denote the strategies for player $i$ which survive to stage $\beta$.  \end{defn}

\begin{rem}  We will identify any Borel measure $\mu \in \Delta(X)$ on a Polish space $X$ with its completion, defined on the $\sigma$-algebra generated by the Borel sets of $X$ together with the null sets of $\mu$, the so-called $\mu$-measurable sets (see 17A of \cite{kechris}). \end{rem}

\begin{rem} With respect to the general definition of rationalizability, a variation of our definition is used in \cite{arieli}.  There, the measure $\mu$ in Definition 2.8 is only required to be a Borel measure on the set $X_j^{\alpha}$, that is, $\mu \in \Delta(X_j^{\alpha})$ rather than a measure on $X_j$ with $\mu(X_j^{\alpha}) = 1$.  Such a measure $\mu \in \Delta(X_j^{\alpha})$ generates a unique measure $\hat{\mu} \in \Delta(X_j)$ (defined by $\hat{\mu}(B) = \mu(B \cap X_j^{\alpha})$) even if $X_j^{\alpha}$ is not $\mu$-measurable (the definitions are equivalent if $X_j^{\alpha}$ is $\mu$-measurable) and the results of this paper would go through with only cosmetic changes if we were to use this definition.  We have opted however to require $\mu(X_j^{\alpha}) = 1$ in the definition.  \end{rem}

\begin{rem} We say that $B_1 \times B_2$ has the best response property, where each $B_i \subseteq X_i$, if for each $i$ every $s \in B_i$ is a best response to some $\mu \in \Delta(X_j)$ with $\mu(B_j) = 1$ in the sense that $$\pi_i(s,\mu) \geq \pi_i(\bar{s},\mu)$$ \noindent for every $\bar{s} \in X_i$. Let $$\text{B}_i(G) = \bigcup \{ B_i \subseteq X_i \ | \ \exists B_j \subseteq X_j \ (B_i \times B_j \mbox{ has the best response property)} \},$$
\noindent Then $\text{B}_1(G) \times \text{B}_2(G)$ has the best response property itself, and thus is the maximal such set.  It is clear that $\text{B}_i(G) = \text{RAT}_i(G)$ for each $i$. Indeed, the set $\text{RAT}_1(G) \times \text{RAT}_2(G)$
has the best response property and strategies in $\text{B}_1(G) \cup \text{B}_2(G)$ survive every stage of elimination.
\end{rem}

\begin{ex}   Let $\leq$ be a pre-wellordering of a Polish space $X$ (for example $X = \omega$) of length $\alpha$ which is suitably measurable.  Set $X_1 = X_2 = X$ and define $$\pi_1(x,y) =
\begin{cases}
1  &  y < x\\
0 & \mbox{otherwise}   \\

\end{cases}
$$      Let $\pi_2(x,y) = \pi_1(y,x)$.  It is easy to see that $\alpha(\text{RAT}(G)) = \alpha$ where $G = (\pi_1,\pi_2)$.   Let $\psi:X \rightarrow \alpha$ be $<$-order preserving and surjective.  A straightforward induction shows that at every stage $\gamma < \alpha$ we have $$\text{RAT}_i^{\gamma} = \psi^{-1}(\alpha \setminus \gamma)$$ for each $i$.
\end{ex}

We also consider strategies that survive the iterated removal of strictly dominated strategies in either the more or the less stringent sense, the latter removing strategies at each stage that are strictly dominated relative to remaining strategies for the opponent by a mixed strategy that concentrates only on remaining strategies for the player.

\begin{defn}  The respective successor stage survival conditions for the solution concepts $\text{IU}$ and $\text{MIU}$, for iteratively undominated and iteratively undominated in the memory-less sense respectively, are given by

\begin{equation} \tag{\text{IU}} 
X^{\alpha+1}_i = \{ s \in X^{\alpha}_i \ | \ \forall \mu \in \Delta(X_i^{0}) \ \exists t \in X_{j}^{\alpha} \ \pi_i(s,t) \geq \pi_i(\mu,t) \}, \mbox{ and}
\end{equation}
\begin{equation} \tag{\text{MIU}}
X^{\alpha+1}_i = \{ s \in X^{\alpha}_i \ | \ \forall \mu \in \Delta(X_i^{\alpha}) \ \exists t \in X_{j}^{\alpha} \ \pi_i(s,t) \geq \pi_i(\mu,t) \}.
\end{equation}

\noindent We again start from $X_i^{0} = X_i$ for each $i$, with intersections taken at limit stages as in Definition 2.4.  \end{defn}

\begin{defn} For each player $i$ there are Polish spaces $T_i$, called the universal type space, and  homeomorphisms $$\lambda_i:T_i \rightarrow \Delta(X_j \times T_j).$$  
The construction of $T_i$, based on \cite{brand2}, is given in Definition 4.1 below (see also \cite{dekel} or \cite{brand} for additional background).  We set $$\text{RCBR}^1_i(G) = \{(s,t) \in X_i \times T_i \ | \ s \mbox{ is a best response to } \mbox{marg}_{X_j}\lambda_i(t) \},$$ and for $n>0$, $$\text{RCBR}_i^{n+1}(G) = \{(s,t) \in X_i \times T_i \ | \ \lambda_i(t)(\text{R}_j^{n}(G)) = 1\}.$$ 
We call $$\text{RCBR}_i(G) = \{ s \in X_i \ | \ \exists t \in T_i \ \forall n \ (s,t) \in \text{RCBR}_i^n(G)\}$$ 
the set of strategies for player $i$ which are consistent with Rationality and Common Belief in Rationality.  \end{defn}

\begin{lem} (Lubin)  If $X,Y$ are Polish spaces, $A \subseteq X \times Y$ is analytic and $\mu \in \Delta(X)$ with $$\mu(\mbox{proj}_{X}(A)) = 1,$$ then there is $\nu \in \Delta(X \times Y)$ with $\nu(A) = 1$ and $\mu = \mbox{marg}_X(\mu)$. \end{lem}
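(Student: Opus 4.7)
The plan is to produce $\nu$ as a pushforward of $\mu$ along a measurable uniformization of the analytic set $A$. First I would observe that $\mbox{proj}_X(A)$ is analytic, being the projection of an analytic set, and hence universally measurable, so the hypothesis $\mu(\mbox{proj}_X(A)) = 1$ makes sense as stated. The main technical input I want to apply is the Jankov--von Neumann uniformization theorem (see 18.A of \cite{kechris}), which produces a universally measurable function $f: \mbox{proj}_X(A) \rightarrow Y$ whose graph is contained in $A$, that is, $(x,f(x)) \in A$ for every $x \in \mbox{proj}_X(A)$.

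Next I would extend $f$ arbitrarily to all of $X$ by fixing some value $y_0 \in Y$ off $\mbox{proj}_X(A)$, and define $\phi: X \rightarrow X \times Y$ by $\phi(x) = (x, f(x))$. Since $f$ is $\mu$-measurable, so is $\phi$, and I would then define $\nu$ on the Borel $\sigma$-algebra of $X \times Y$ by $\nu(B) = \mu(\phi^{-1}(B))$. The verification that $\nu$ is a well-defined Borel probability measure is routine transfer of countable additivity from $\mu$. The desired conclusions then drop out: $\phi^{-1}(A) \supseteq \mbox{proj}_X(A)$ gives $\nu(A) \geq \mu(\mbox{proj}_X(A)) = 1$, while for Borel $C \subseteq X$, $\phi^{-1}(C \times Y) = C$ gives $\mbox{marg}_X(\nu)(C) = \mu(C)$.

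The real content lies in invoking Jankov--von Neumann; it is precisely the analyticity of $A$ that guarantees a $\mu$-measurable selector whose graph lies in $A$. If $A$ were only universally measurable, no such selector need exist in $\mathsf{ZFC}$, which is why the generalization beyond analytic sets requires the stronger principles $\mathsf{MUP}$ and $\mathsf{MEA}$ foreshadowed in the introduction. Thus everything except the uniformization step is bookkeeping, and I would expect no substantive obstacle once the correct form of the Jankov--von Neumann theorem is brought to bear.
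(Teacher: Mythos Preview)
Your proposal is correct and follows essentially the same route as the paper: invoke the Jankov--von Neumann uniformization theorem to obtain a universally measurable selector $f$ for $A$, then take $\nu$ to be the pushforward of $\mu$ along $x \mapsto (x,f(x))$. The only cosmetic difference is that you explicitly extend $f$ to all of $X$ before pushing forward, whereas the paper works directly on $\mbox{proj}_X(A)$; both are fine since $\mu(\mbox{proj}_X(A)) = 1$.
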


\begin{proof}  This is proved in \cite{lubin} and stated in this form as Lemma 6 of \cite{arieli}.  By the Jankov-von Neumann Uniformization Theorem (see 18.1 and 29.9 of \cite{kechris}) there is $$f: \mbox{proj}_{X}(A) \rightarrow Y$$ which is analytically measurable (and hence universally measurable).  We define $$\nu(S) = \{ x \ | \ (x,f(x)) \in S \}$$ for Borel $S \subseteq X \times Y$.  It can be easily shown that $\nu$ is the desired measure.
\end{proof}

\begin{thm} (Fundamental Theorem of EGT) Suppose $G$ is a game with compact Polish strategy spaces with continuous payoffs.  Then  $$\text{RAT}(G) = \text{MRAT}(G) = \text{IU}(G) = \text{MIU}(G) = \text{RCBR}(G).$$
The iterated deletion procedures converge at the end of the finite stages.
\end{thm}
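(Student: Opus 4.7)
My plan is to handle the chain of equalities in three groups: (i) convergence at stage $\omega$ and the equivalences among $\text{RAT}$, $\text{MRAT}$, $\text{IU}$, $\text{MIU}$, (ii) the easy inclusion $\text{RCBR}(G) \subseteq \text{RAT}(G)$, and (iii) the main inclusion $\text{RAT}(G) \subseteq \text{RCBR}(G)$.

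For convergence, in the compact continuous setting the best-response correspondence is upper semi-continuous with closed graph, so each $X_i^\alpha$ is closed in the compact space $X_i^0$, and the descending chain has non-empty compact intersection $X_i^\omega$. Given $s \in X_i^\omega$, a weakly convergent subsequence of witnessing beliefs $\mu_n \in \Delta(X_j^n)$ yields $\mu \in \Delta(X_j^\omega)$ to which $s$ is a best response, so $X_i^{\omega+1} = X_i^\omega$. For the four-way equivalence, Proposition 2.3 applied at stage $\alpha+1$ to the compact continuous decision problems $\pi_i : X_i^0 \times X_j^\alpha \to \bbR$ and $\pi_i : X_i^\alpha \times X_j^\alpha \to \bbR$ yields $\text{RAT}^\alpha = \text{IU}^\alpha$ and $\text{MRAT}^\alpha = \text{MIU}^\alpha$ by induction on $\alpha$. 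The inclusion $\text{RAT} \subseteq \text{MRAT}$ is immediate; the reverse reduces to $\text{MIU} \subseteq \text{IU}$ via the standard replacement argument, iteratively substituting each $t \in X_i^0 \setminus X_i^\alpha$ in the support of a dominating mixture by the mixture that eliminated $t$, which terminates in the compact continuous case.

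For $\text{RCBR}(G) \subseteq \text{RAT}(G)$, if $(s,t) \in R_i^n$ for every $n$, then $\mu := \text{marg}_{X_j}(\lambda_i(t))$ assigns mass one to $\text{RCBR}_j(G)$ and $s$ is a best response to $\mu$. Hence $\text{RCBR}_1(G) \times \text{RCBR}_2(G)$ has the best response property, and by Remark 2.7 is contained in the maximal such rectangle $\text{RAT}_1(G) \times \text{RAT}_2(G)$.

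The main technical step is $\text{RAT}(G) \subseteq \text{RCBR}(G)$. Let $S_i = \text{RAT}_i(G)$, which is compact. The set
$$C_i = \bigl\{(s,\mu) \in S_i \times \Delta(X_j) : \mu(S_j) = 1 \text{ and } s \text{ is a best response to } \mu \bigr\}$$
is closed in $S_i \times \Delta(X_j)$ by upper semi-continuity and projects onto $S_i$ by Remark 2.7, so by Kuratowski-Ryll-Nardzewski it admits a Borel measurable selection $s \mapsto \mu^i(s) \in \Delta(S_j)$. I build an auxiliary type structure on $T_i^* = S_i$ by defining $\lambda_i^*(s) \in \Delta(X_j \times S_j)$ as the pushforward of $\mu^i(s)$ along the diagonal embedding $\bar{s} \mapsto (\bar{s},\bar{s})$, so that $\text{marg}_{X_j}(\lambda_i^*(s)) = \mu^i(s)$ and $\lambda_i^*(s)$ concentrates on the identity graph of $S_j$. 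A short induction on $n$ then shows that each $s \in S_i$ satisfies $n^{th}$-order rationality at itself in this structure. The universality of the Brandenburger-Dekel type space (Definition 4.1 below) yields a measurable hierarchy-preserving map $\tau^i : S_i \to T_i$ with $(s, \tau^i(s)) \in R_i^n$ for every $n$, giving $s \in \text{RCBR}_i(G)$. The hardest part is verifying that this embedding carries $n^{th}$-order rationality through the universal type space construction, which requires a careful induction using both the measurability of $\mu^i$ and the coherence of the finite-order belief hierarchies.
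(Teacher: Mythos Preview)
Your overall structure is sound, but two points warrant comment.

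\textbf{Part (i), the replacement argument.} Your reduction of $\text{MRAT} \subseteq \text{RAT}$ to $\text{MIU} \subseteq \text{IU}$ via iterative substitution is more delicate than you indicate. In the finite case the replacement terminates because there are finitely many stages and finitely many strategies; in the compact continuous setting a dominating mixture $\mu \in \Delta(X_i^0)$ may assign mass to uncountably many eliminated strategies, and each substitution may reintroduce mass outside $X_i^\alpha$, so the assertion that this ``terminates in the compact continuous case'' is not obvious and needs a genuine limiting argument. The paper bypasses this entirely with a one-line minimality argument for $\text{RAT} = \text{MRAT}$: take $\beta$ least with some $s \in \text{MRAT}^{\beta+1}_i \setminus \text{RAT}^{\beta+1}_i$ and witnessing measure $\mu$; any global best response $t \in X_i^0$ to $\mu$ already lies in $\text{RAT}^\beta_i = \text{MRAT}^\beta_i$ (since $\mu$ certifies survival at every earlier stage), so the $\text{MRAT}$ condition forces $\pi_i(s,\mu) \geq \pi_i(t,\mu)$, making $s$ a global best response after all. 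This is both shorter and avoids any termination issue.

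\textbf{Part (iii), the main inclusion.} Here you take a genuinely different route from the paper's proof of this theorem. The paper works entirely inside the universal type space: it defines auxiliary closed sets $\bar{R}_i^n \subseteq \text{RCBR}_i^n$ restricted to $\text{RAT}_i$ and to beliefs concentrating on $\text{RAT}_j$, shows inductively via Lubin's measure-extension lemma (Lemma 2.11) that each $\bar{R}_i^n$ projects onto $\text{RAT}_i$, and then extracts a single witnessing type as a point in the intersection of the decreasing compact fibers $\{t : (s,t) \in \bar{R}_i^n\}$. Your approach---Borel selection into an auxiliary type structure on $S_i$, then embed into the universal space---is essentially the strategy the paper pursues later in Remark 4.11 (and under stronger hypotheses in Theorem 4.9), where the hierarchy $\delta_i^n(s)$ is built explicitly as iterated pushforwards of $\mu^i(s)$. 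Your appeal to ``universality'' as a black box is exactly the step that needs unpacking: the paper does not state an abstract embedding theorem, so you would have to write out the map $\tau^i$ concretely and verify coherence and $(s,\tau^i(s)) \in R_i^n$, which is what Remark 4.11 does. Your route has the virtue of yielding a canonical type per rationalizable strategy; the paper's compactness-intersection argument is more self-contained, never leaves the universal space, and does not require a selection theorem.
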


\begin{proof}  To see $$\text{RAT}(G) = \text{MRAT}(G),$$ assume otherwise and let $\beta$ be least with 
some 
$$s \in \text{MRAT}_i^{\beta+1}(G) \setminus \text{RAT}^{\beta+1}(G).$$  Let $t \in X_i$ be a best response to a measure $\mu \in \Delta(\text{RAT}^{\beta}_j)$ that witnesses $s \in \text{MRAT}_i^{\beta+1}(G)$.  Then $\mu$ witnesses that $t$ survives until stage $\beta$ hence $t \in \text{RAT}_i^{\beta}(G)$, a contradiction.  Now, by induction, using Proposition 2.3, and noting that each $\text{RAT}^{\beta}_i(G)$ is closed and hence compact, we have $$\text{IU}^{\beta}_i(G) = \text{RAT}^{\beta}_i(G)$$ and 
$$\text{MIU}^{\beta}_i(G) = \text{MRAT}^{\beta}_i(G).$$ 

Thus all four procedures coincide.  To see that $\text{RAT}(G)$, and hence the other procedures, converges at the end of the finite stages, suppose $s \in \text{RAT}^{\omega}_i(G)$ and for each $n$, $s$ is a best response to some $\mu_n \in \Delta(X_j)$ with $\mu_n(\text{RAT}^n_j(G)) = 1$.  By compactness, some sub-sequence converges to some $\mu \in \Delta(X_j)$.  Thus $\mu(\text{RAT}^n_j(G)) = 1$ for each $n$ as these sets are closed so $\mu( \text{RAT}^{\omega}_j(G)) = 1$ and it follows from continuity that $s$ is a best response to $\mu$.

We now note, as mentioned in the introduction, for each $i$ and $s \in \text{RCBR}_i(G)$ there is a witness $t$ such that $s$ is a best response to $\mbox{marg}_{X_j}\lambda_i(t)$ and $$\mbox{marg}_{X_j}\lambda_i(t)(\text{RCBR}_j(G)) = 1$$ from which it follows by induction that $\text{RCBR}_i(G) \subseteq \text{RAT}_i(G)$ (see also Lemma 3.5 below).  Thus, it remains to show $\text{RAT}_i(G) \subseteq \text{RCBR}_i(G)$.  We note that each $\text{RAT}_i(G)$ is nonempty and compact as an intersection of nonempty compact sets (this uses the Portmanteau Theorem 17.20 of \cite{kechris}).  

\noindent Let $$\bar{\text{R}}_{i}^{1} = \{ (s,t) \in \text{RCBR}^1_i \ | \ s \in \text{RAT}_i \mbox{ and }\mbox{marg}_{X_j} (\lambda_i(t))( \text{RAT}_j) = 1 \}$$ and for $n>0$ let $$\bar{\text{R}}^{n+1}_i = \{(s,t) \in \bar{R}^{n}_i \ | \ \lambda_i(t)(\bar{\text{R}}^{n}_j) = 1\}.$$  A straightforward induction, again  using the Portmanteau Theorem and the continuity of the marginal function $$\mbox{marg}_{X_j} : \Delta(X_j \times T_j) \rightarrow \Delta(X_j),$$ shows that $\bar{\text{R}}^{n}_i$ is closed for each $i$ and $n$.  It is immediate that each $\bar{\text{R}}^{n}_i \subseteq \text{RCBR}_i^{n}$.  We will show that $$\text{RAT}_i(G) = \mbox{proj}_{X_i}(\bar{\text{R}}^{n}_i)$$ by induction.

If $s \in \text{RAT}_i$ there is $\mu \in \Delta(X_j)$ such that $\mu(\text{RAT}_j(G)) = 1 $ such that $s$ is a best response to $\mu$.  We can apply Lemma 2.11 to lift $\mu$ to $\nu \in \Delta(X_j \times T_j)$ with $\mbox{marg}_{X_j}(\nu) = \mu$.  Then, as $\lambda_i$ is surjective, we find $t$ with $\lambda_i(t) = \nu$ so that 
$(s,t) \in \bar{\text{R}}^{1}_i$.  The induction step follows similarly with $\bar{\text{R}}_j^n$ replacing $X_j \times T_j$ as the lifting target. That is, given $s \in \text{RAT}_i$ and $\mu$ as above, we lift to $\nu \in \Delta(X_j \times T_j)$ with $\mbox{marg}_{X_j}(\nu) = \mu$ and $\nu(\bar{\text{R}}_j^n)=1$.  Again because $\lambda_i$ is surjective, we find $t$ with $\lambda_i(t) = \nu$ and, because the $\bar{\text{R}}^{n}_j$ are decreasing in $n$, we conclude that  $(s,t) \in \bar{\text{R}}^{n+1}_i$.
Thus for each $s \in \text{RAT}_i(G)$ the sets $\{ t \ | \ (s,t) \in \bar{\text{R}}^{n}_i \}$ are decreasing, compact and nonempty, and so have nonempty intersection.  This establishes the result. \end{proof}

\begin{rem}  We used Lemma 2.11 in the proof of the Fundamental Theorem but 
there are easier uniformization results in the compact case.  For example, we can view $\bar{\text{R}}_j^{n}$ as a closed-valued correspondence from $\text{RAT}_j(G)$ to $T_j$ which is easily seen to be upper hemi-continuous (it has a closed graph; see 17.11 of \cite{ali}) and apply the Kuratowski and Ryll-Nardzewski Selection Theorem (see Thm. 5.2.1 of \cite{sri} or 18.13 of \cite{ali}) to produce a Borel map 
$$\phi_j^n:\text{RAT}_j(G) \rightarrow T_j$$ with $$(\bar{s},\phi_j^n(\bar{s})) \in \bar{\text{R}}_j^n$$ for every $\bar{s} \in \text{RAT}_j(G)$.  We can then define the Borel measure $\nu \in \Delta(X_j \times T_j)$ by $$\nu(S) = \mu( \{ \bar{s} \ \in X_j \ | \ (\bar{s},\phi_j^n(\bar{s})) \in S \} ).$$
Thus $\nu(\bar{\text{R}}_j^n) = 1$ and $\mbox{marg}_{X_j}(\nu) = \mu$ so there is $t$ with $\lambda_i(t) = \nu$ and $(s,t) \in \bar{\text{R}}_i^{n+1}$ as desired.\footnote{See also 12.13, 18.18, and 35.46 of \cite{kechris}.  There is a Borel map $c$ defined on the space of compact subsets of $T_j$ which selects a member of each such set.  The sets $\{t \in T_j \ | \ (s,t) \in \bar{\text{R}}_j^n \}$ are compact and nonempty for $s \in \text{RAT}_j(G)$ so
 $$\phi_j^n(s) = c( \{t \in T_j \ | \ (s,t) \in \bar{\text{R}}_j^n \} )$$ 
defines a Borel uniformization of $\bar{\text{R}}_j^n$ on $\text{RAT}_j(G)$.} \end{rem}

\section{Interactive Epistemics}
Assume $X_1$ and $X_2$ are Polish spaces and for each $i$ let $\Delta(X_i)$ denote the Polish space of Borel probability measures on $X_i$.  As before we think of $X_i$ as the strategy or choice space for player $i$, and in a context where $i$ is used, we sometimes use $j$ to represent the other player.  Let $E_i \subseteq X_i \times \Delta(X_j)$ for each $i$, and let $E$ denote the pair $(E_1,E_2)$.  We think of $E_i$ as a relation between strategies and (first-order) beliefs for player $i$. This will be our basic object of study.

\begin{defn}  A strategy rectangle $B_1 \times B_2 \subseteq X_1 \times X_2$ is called $\text{E}$-justified if for every $s \in B_i$ there is $\mu \in \Delta(X_j)$ with $(s,\mu) \in \text{E}_i$ such that $\mu(B_j) = 1$.
Thus, setting $$\text{RAT}(\text{E})_i = \bigcup \{ \ \mbox{proj}_{X_i}(B) \ | \ B \mbox{ is an } \text{E}\mbox{-justified rectangle} \ \}$$
we see that $${\text{RAT}(\text{E})} = \text{RAT}(\text{E})_1 \times \text{RAT}(\text{E})_2$$ is itself an $\text{E}$-justified rectangle and hence the maximal one.  We say that a strategy $s \in \text{RAT}(\text{E})_i$ is $\text{E}$-rationalizable for player $i$ (cf. Remark 2.6). 
\end{defn}

\begin{rem} The sets $\text{RAT}(\text{E})_i$ can also be characterized by iteratively deleting non-$\text{E}$-justified strategies as follows. We 
set $\text{RAT}(\text{E})_i^{0}= X_i$, take intersections at limit stages, and set 
$$\text{RAT}(\text{E})_i^{\alpha+1} = \{ s \in \text{RAT}(\text{E})_i^{\alpha} \ | \ \mu(\text{RAT}(\text{E})^{\alpha}_j) = 1 \mbox{ for some } \mu \mbox{ with } (s,\mu) \in \text{E}_i \}.$$ We set $\alpha(E)$ equal to the least $\alpha$ such that $\text{RAT}(\text{E})^{\alpha}_i = \text{RAT}(\text{E})_i^{\alpha+1}$ for both $i$.  It is easy to see that $$\text{RAT}(\text{E})^{\alpha(E)}_1 \times \text{RAT}(\text{E})^{\alpha(E)}_2  = \text{RAT}(\text{E})_1 \times \text{RAT}(\text{E})_2.$$  Indeed, the strategies in $\text{RAT}(\text{E})_1 \times \text{RAT}(\text{E})_2$ survive iterated deletion and $$\text{RAT}(\text{E})^{\alpha(E)}_1 \times \text{RAT}(\text{E})^{\alpha(E)}_2$$ is itself $\text{E}$-justified by definition.  \end{rem}

\begin{defn}   For each player $i$ there are Polish spaces $T_i$, called the universal type space for player $i$, and 
homeomorphisms  (we use the construction in \cite{brand2}; see Definition 4.1 below) $$\lambda_i:T_i \rightarrow \Delta(X_j \times T_j).$$    The relations $\text{E}_i$ give rise to sets $$\text{RCBR}(\text{E})^1_i = \{(s,t) \in X_i \times T_i \ | \ (s,\mbox{marg}_{X_j}\lambda_i(t)) \in \text{E}_i \},$$ and for $n >1$ 

$$\text{RCBR}(\text{E})_i^{n+1} = \{(s,t) \in \text{RCBR}(\text{E})_i^{n} \ | \ \lambda_i(t)(\text{RCBR}(\text{E})_j^{n}) = 1\}.$$  We set 
$$\text{RCBR}(\text{E})_i  = \{ s \in X_i \ | \ \exists t \in T_i \ (s,t) \in \bigcap_{n>0}  \text{RCBR}(\text{E})_i^n \}$$ and 
$$\text{RCBR}(\text{E}) = \text{RCBR}(\text{E})_1 \times \text{RCBR}(\text{E})_2.$$ We say that $\text{RCBR}(\text{E})_i$ are the strategies for $i$ which are consistent with "$\text{E}$ and common belief in $\text{E}$". 
\end{defn}

\begin{rem}  Recall that a subset $A \subseteq X$ of a Polish space is universally measurable if $A$ is measurable by the completion of $\mu$ for every $\mu \in \Delta(X)$ (see 17A of \cite{kechris}).  In Definition 3.3, if we assume that each $\text{E}_i$ is universally measurable, then $\text{RCBR}(\text{E})^1_i$ is universally measurable as a continuous pre-image of a universally measurable set and the universal measurability of $\text{RCBR}(\text{E})_i^{n+1}$ follows from Cor. 7.46 of \cite{shreve}. \end{rem}

\begin{lem}  $\text{RCBR}(\text{E})$ is $E$-justified and hence $\text{RCBR}(\text{E}) \subseteq \text{RAT}(\text{E}).$  \end{lem}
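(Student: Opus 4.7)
The plan is direct and only mildly technical. I fix a player $i$ and take $s \in \text{RCBR}(\text{E})_i$ with witnessing type $t \in T_i$, so that $(s,t) \in \text{RCBR}(\text{E})_i^n$ for every $n > 0$. The candidate first-order belief justifying $s$ is the marginal $\mu := \text{marg}_{X_j}\lambda_i(t)$. I need to verify the two conditions in the definition of an $\text{E}$-justified rectangle applied to $\text{RCBR}(\text{E})_1 \times \text{RCBR}(\text{E})_2$: namely $(s,\mu) \in \text{E}_i$, and $\mu(\text{RCBR}(\text{E})_j) = 1$.

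The first condition is immediate: it is precisely the assertion that $(s,t) \in \text{RCBR}(\text{E})_i^1$, which holds by hypothesis. For the second, I would observe that the sets $\text{RCBR}(\text{E})_j^n$ are nested decreasing in $n$ (each successor stage adds a further full-measure restriction on $\lambda_j(\cdot)$), and that for each $n \geq 1$ the hypothesis $(s,t) \in \text{RCBR}(\text{E})_i^{n+1}$ gives $\lambda_i(t)(\text{RCBR}(\text{E})_j^n) = 1$. A countable intersection of sets of full probability then has full probability, so $\lambda_i(t)\!\left(\bigcap_{n>0} \text{RCBR}(\text{E})_j^n\right) = 1$. By the definition of $\text{RCBR}(\text{E})_j$, the $X_j$-projection of $\bigcap_{n>0} \text{RCBR}(\text{E})_j^n$ is exactly $\text{RCBR}(\text{E})_j$, so pushing forward under $\text{marg}_{X_j}$ yields $\mu(\text{RCBR}(\text{E})_j) = 1$.

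The inclusion $\text{RCBR}(\text{E}) \subseteq \text{RAT}(\text{E})$ is then a one-line consequence of Definition 3.1: having shown $\text{RCBR}(\text{E})$ is $\text{E}$-justified, it must be a subrectangle of the maximal $\text{E}$-justified rectangle $\text{RAT}(\text{E})$.

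The only point requiring a sentence of care is measurability. The set $\text{RCBR}(\text{E})_j \subseteq X_j$ is the projection of a universally measurable set (Remark 3.4) and need not itself be Borel. However, any Borel subset of $\bigcap_{n>0}\text{RCBR}(\text{E})_j^n$ of full $\lambda_i(t)$-measure projects to an analytic, hence universally measurable, subset of $\text{RCBR}(\text{E})_j$ of full $\mu$-measure; thus $\text{RCBR}(\text{E})_j$ is $\mu$-measurable in the completed sense of Remark 2.5 with $\mu(\text{RCBR}(\text{E})_j) = 1$, which is exactly what the definition of $\text{E}$-justified requires. This is the only genuinely subtle step, and it is handled by the completion convention already in force.
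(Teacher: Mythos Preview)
Your proof is correct and follows essentially the same route as the paper: fix a witnessing type, set $\mu = \mbox{marg}_{X_j}\lambda_i(t)$, get $(s,\mu)\in\text{E}_i$ from level $n=1$, take the countable intersection of the $\text{RCBR}(\text{E})_j^n$ to get full $\lambda_i(t)$-measure, and handle the measurability of $\text{RCBR}(\text{E})_j$ by passing to a Borel subset of full measure and projecting to an analytic subset. The paper's proof is the same argument with the measurability step written out a bit more tersely.
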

\begin{proof} Let $s$ be a strategy for player $i$ which has a type $t \in T_i$ at which $\text{E}$ and common belief in $\text{E}$ holds.   
We have $$\lambda_i(t)(\bigcap_{n>0} \text{RCBR}(\text{E})^n_j) = 1$$ so $\mu(\text{RCBR}(\text{E})_j) = 1$ where $\mu = \mbox{marg}_{X_j}(\lambda_i(t))$. To see this let $$B \subseteq \bigcap_{n>0} \text{RCBR}(\text{E})^n_j$$ be Borel with $\lambda_i(t)(B) = 1$.  Then $$\mbox{proj}_{X_j}(B) \subseteq \text{RCBR}(\text{E})_j$$ is analytic and 
$\mu(\mbox{proj}_{X_i}(B)) =1$ so $\mu(\text{RCBR}(\text{E})_j) = 1$.
We have $(s,\mu) \in \text{E}_i$ by assumption.  \end{proof}

\begin{thm}  Suppose each $X_i$ is compact and each $E_i \subseteq X_i \times \Delta(X_j)$ is closed.  Then $$\text{RAT}(\text{E}) = \text{RCBR}(\text{E}).$$
\end{thm}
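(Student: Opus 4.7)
The plan is to mimic the proof of Theorem 2.10 line-by-line, since the relations $\text{E}_i$ play exactly the structural role that the best-response relations played there, with closedness of $\text{E}_i$ replacing the hypothesis of continuity of payoffs that was used implicitly through the closed graph of best response. By Lemma 3.5 we already have $\text{RCBR}(\text{E}) \subseteq \text{RAT}(\text{E})$, so only the reverse inclusion needs proof.

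First I would verify by transfinite induction along the iterated-deletion description in Remark 3.2 that each $\text{RAT}(\text{E})^\alpha_i$, and hence $\text{RAT}(\text{E})_i$ itself, is closed (therefore compact) in $X_i$. The successor stage uses closedness of $\text{E}_i$, compactness of $\Delta(X_j)$ from Prokhorov's theorem, and the upper semi-continuity of $\mu \mapsto \mu(F)$ for closed $F$ from the Portmanteau theorem (17.20 of \cite{kechris}); limit stages are immediate from the intersection definition.

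Next, in direct parallel with the proof of Theorem 2.10, set
$$\bar{\text{R}}^1_i = \{(s,t) \in \text{RCBR}(\text{E})^1_i \ | \ s \in \text{RAT}(\text{E})_i \mbox{ and } \mbox{marg}_{X_j}(\lambda_i(t))(\text{RAT}(\text{E})_j) = 1\}$$
and for $n > 0$
$$\bar{\text{R}}^{n+1}_i = \{(s,t) \in \bar{\text{R}}^n_i \ | \ \lambda_i(t)(\bar{\text{R}}^n_j) = 1\}.$$
A simultaneous induction using continuity of the marginal map $\mbox{marg}_{X_j}: \Delta(X_j \times T_j) \rightarrow \Delta(X_j)$ together with Portmanteau shows each $\bar{\text{R}}^n_i$ is closed, and by construction $\bar{\text{R}}^n_i \subseteq \text{RCBR}(\text{E})^n_i$. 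The main claim is $\mbox{proj}_{X_i}(\bar{\text{R}}^n_i) = \text{RAT}(\text{E})_i$ for every $n$, proved by induction on $n$. For $s \in \text{RAT}(\text{E})_i$ with witness $\mu$ satisfying $(s,\mu) \in \text{E}_i$ and $\mu(\text{RAT}(\text{E})_j) = 1$, the base case asks only for some $t$ with $\mbox{marg}_{X_j}(\lambda_i(t)) = \mu$, which exists by surjectivity of $\lambda_i$. For the induction step the hypothesis gives $\mu(\mbox{proj}_{X_j}(\bar{\text{R}}^n_j)) = 1$, so Lubin's Lemma 2.11 lifts $\mu$ to $\nu \in \Delta(X_j \times T_j)$ with $\mbox{marg}_{X_j}(\nu) = \mu$ and $\nu(\bar{\text{R}}^n_j) = 1$; surjectivity of $\lambda_i$ then supplies $t$ with $\lambda_i(t) = \nu$, and the fact that the $\bar{\text{R}}^n_j$ are decreasing yields $(s,t) \in \bar{\text{R}}^{n+1}_i$.

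Finally, since the universal type spaces $T_i$ are themselves compact in this compact regime (as projective limits of compact spaces in the Brandenburger-Dekel construction of Definition 4.1), for each fixed $s \in \text{RAT}(\text{E})_i$ the fibers $\{t \in T_i \ | \ (s,t) \in \bar{\text{R}}^n_i\}$ form a decreasing sequence of nonempty compact subsets of $T_i$, hence have nonempty intersection, producing a type $t$ with $(s,t) \in \bigcap_{n > 0} \text{RCBR}(\text{E})^n_i$. The main obstacle is the closedness bookkeeping at each level $\bar{\text{R}}^n_i$, but this goes through routinely once $\text{E}_i$ is assumed closed; the rest is a transcription of the Theorem 2.10 argument with the abstract relation $\text{E}_i$ in place of the best-response relation of a game.
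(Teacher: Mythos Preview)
Your proposal is correct and follows essentially the same route as the paper's own proof: define the auxiliary closed sets $\bar{\text{R}}^n_i$ (the paper calls them $S^n_i$), show by induction that they project onto $\text{RAT}(\text{E})_i$ using Lubin's lemma for the lifting, and finish with a compactness argument on the decreasing fibers. You are slightly more explicit than the paper in justifying that each $\text{RAT}(\text{E})^\alpha_i$ is closed and that the type spaces $T_i$ are compact when the $X_i$ are---both points the paper uses but glosses over.
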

\begin{proof}  By Lemma 3.5 we only need to establish the reverse inclusion.  Again, we can assume that each $\text{RAT}(\text{E})_i$ is nonempty, and is therefore  compact as an intersection of nonempty compact sets.  Let $$S_{i}^{1} = \{ (s,t) \in \text{RCBR}(\text{E})_i^1 \ | \ s \in \text{RAT}(\text{E})_i \mbox{ and } \mbox{marg}_{X_j} (\lambda_i(t))( \text{RAT}(\text{E})_j) = 1 \}$$ and for $n>0$ let $$S^{n+1}_i = \{(s,t) \in S^{n}_i \ | \ \lambda_i(t)(S^{n}_j) = 1\}.$$  A straightforward induction using the assumption on $\text{E}$ and the Portmanteau Theorem (17.20 of \cite{kechris}) shows that $S^{n}_i$ is closed for each $i$ and $n$.  We show that $$\text{RAT}(\text{E})_i = \mbox{proj}_{X_i}(S^{n}_i)$$ by induction.  Since $S_i^{n} \subseteq \text{RCBR}(\text{E})_i^{n}$ the result follows as in Theorem 2.11.  For the base case, if $s \in \text{RAT}(\text{E})_i$ there is $\mu \in \Delta(X_j)$ such that $(s,\mu) \in \text{E}_i$ and $\mu(\text{RAT}(\text{E})_j) = 1$.  Here we can just lift to $\nu \in \Delta(X_j \times T_j)$ defined by $\nu(S \times T_j) = \mu(S)$ so that $\mbox{marg}_{X_j}(\nu) = \mu$.  There must be $t \in T_i$ with $\lambda_i(t) = \nu$ so $(s,t) \in S^{1}_i$.  For the induction step, we can lift the same $\mu$ to $\nu$ on $X_j \times T_j$ by Lemma 2.11 with $\nu(\text{RCBR}(\text{E})^{n}_j) = 1$
and again find $t \in T_i$ with $\lambda_i(t) = \nu$ so that $\lambda_i(t) (\text{RCBR}(\text{E})^{n}_j) = 1$. 
Thus $(s,t) \in S^{n+1}_i$, as the $\text{RCBR}(\text{E})^{n}_j$ are decreasing.

As in Theorem 2.11, for $s \in \text{RAT}(\text{E})_i$, the set of $t$ with $(s,t) \in S_i^n$ for every $n$ is an intersection of decreasing sequence of nonempty compact sets, and thus there is $t \in T_i$ with $$(s,t) \in \bigcap_{n<\omega} S_i^{n} \subseteq \text{RCBR}(\text{E})^n_i$$ so $s \in \text{RCBR}(\text{E})_i$.   \end{proof}

\begin{cor}  Rationalizability is equivalent to Rationality and Common Belief in Rationality for continuous games played on compact Polish spaces.
\end{cor}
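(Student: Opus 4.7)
The plan is to reduce this to Theorem 3.6 by taking $\text{E}^G_i \subseteq X_i \times \Delta(X_j)$ to be the best-response relation of $G$, namely $\text{E}^G_i = \{(s,\mu) \mid \pi_i(s,\mu) \geq \pi_i(\bar{s},\mu) \text{ for every } \bar{s} \in X_i\}$. Unpacking Definition 3.1 and Definition 3.3 with this choice of $\text{E}$, the $\text{E}^G$-justified rectangles are precisely the strategy rectangles with the best response property of Remark 2.6, so $\text{RAT}(G) = \text{RAT}(\text{E}^G)$, and the sets $\text{RCBR}(\text{E}^G)^n_i$ coincide with $\text{RCBR}^n_i(G)$ from Definition 2.10, yielding $\text{RCBR}(G) = \text{RCBR}(\text{E}^G)$.

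With this translation in place, the only remaining task is to verify that each $\text{E}^G_i$ is closed in $X_i \times \Delta(X_j)$, after which Theorem 3.6 immediately gives the desired equality. First I would show that the bilinear extension $(s,\mu) \mapsto \pi_i(s,\mu) = \int \pi_i(s,t)\,d\mu(t)$ is jointly continuous on $X_i \times \Delta(X_j)$: since $\pi_i$ is continuous on the compact product $X_1 \times X_2$ it is uniformly continuous, so fixing $\epsilon > 0$ and a convergent net $(s_\alpha,\mu_\alpha) \to (s,\mu)$, eventually $\sup_{t}|\pi_i(s_\alpha,t) - \pi_i(s,t)| < \epsilon$, while the Portmanteau Theorem (17.20 of \cite{kechris}) applied to the continuous bounded function $\pi_i(s,\cdot)$ gives $\int \pi_i(s,t)\,d\mu_\alpha \to \int \pi_i(s,t)\,d\mu$. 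Combining these yields joint continuity.

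From joint continuity the map $\mu \mapsto \sup_{\bar{s} \in X_i}\pi_i(\bar{s},\mu)$ is continuous on $\Delta(X_j)$ (the supremum being attained by compactness of $X_i$), so $\text{E}^G_i = \{(s,\mu) \mid \pi_i(s,\mu) - \sup_{\bar s}\pi_i(\bar s,\mu) \geq 0\}$ is closed. Theorem 3.6 then yields $\text{RAT}(\text{E}^G) = \text{RCBR}(\text{E}^G)$, i.e.\ $\text{RAT}(G) = \text{RCBR}(G)$, which is the statement of the corollary.

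The only step requiring any real care is the joint continuity of $(s,\mu) \mapsto \pi_i(s,\mu)$, and even there the argument is standard once one exploits uniform continuity of $\pi_i$ on the compact product together with weak convergence of measures; no additional measurability or uniformization machinery (such as Lemma 2.11) is required in this compact-continuous regime, since the lifting of $\mu$ to $\nu \in \Delta(X_j \times T_j)$ needed inside Theorem 3.6 can be carried out trivially by the product construction in the base case and by Lemma 2.11 in the induction step.
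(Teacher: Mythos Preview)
Your proof is correct and follows essentially the same approach as the paper: define the best-response relations $\text{E}^G_i$, identify $\text{RAT}(G)$ and $\text{RCBR}(G)$ with $\text{RAT}(\text{E}^G)$ and $\text{RCBR}(\text{E}^G)$, verify that each $\text{E}^G_i$ is closed, and invoke Theorem 3.6. The paper merely asserts the closedness of $\text{E}^G_i$ whereas you supply a detailed argument via joint continuity of $(s,\mu)\mapsto\pi_i(s,\mu)$; your final paragraph is slightly muddled (you say Lemma 2.11 is not needed and then say it is used in the induction step of Theorem 3.6), but this is commentary on the internals of Theorem 3.6 rather than part of the corollary's proof, so it does no harm.
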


\begin{proof}  Suppose each $\pi_i:X_1 \times X_2 \rightarrow \mathbb{R}$ is a bounded payoff function so that $G = (\pi_1,\pi_2)$ is a strategic form game.   Let 
$$\text{E}_1 = \text{E}^G_1 = \{ (s, \mu) \in X_1 \times \Delta(X_2) \ | \ \forall t \in X_1 \ \pi_1(s,\mu) \geq \pi_1(t,\mu) \}$$
$$\text{E}_2 = \text{E}^G_2 = \{ (s, \mu) \in X_2 \times \Delta(X_1) \ | \ \forall t \in X_2 \ \pi_2(\mu,s) \geq \pi_2(\mu,t) \}$$  denote the best response relation for each player, and set $\text{E} = (\text{E}_1,\text{E}_2)$  Thus $$\text{RAT}(\text{E}) = \text{RAT(G)} \mbox{ and } \text{RCBR}(\text{E}) = \text{RCBR(G)}.$$

\noindent Observe that the relations $\text{E}_i$ as above are closed if each $\pi_i$ is continuous, and so the result follows from Theorem 3.6.  \end{proof}

\section{Belief Hierarchies and Determinacy I}

\noindent We continue to use the notation of Section 3.  Given Lemma 3.5, we are interested in the inclusion 
$$\text{RAT}(\text{E})_1 \times \text{RAT}(\text{E})_2 \subseteq \mathbf{\text{RCBR}(\text{E})}_1 \times \mathbf{\text{RCBR}(\text{E})}_2,$$ and assumptions under which it holds for a pair of universally measurable relations $\text{E}= (\text{E}_1,\text{E}_2)$.  In this section, we show that the inclusion follows from the Axiom of Real Determinacy.  We will now need the construction of the universal type space from \cite{brand2}.

\begin{defn}
The spaces $T_i$ and maps $\lambda_i$ referenced in the definition of $\text{RCBR}$ are called the universal type structure in the literature.    A type is a certain hereditarily coherent sequence of measures.   Adapting \cite{brand2}, for each i set $X^0_i = X_j$ and for $n>0$ define 
$$X^n_i = X^{n-1}_i \times \Delta(X_j^{n-1}).$$
Thus $$X^n_i = X_j \times \Delta(X_j^0) \times \Delta(X_j^1) \times ... \times \Delta(X_j^{n-1}).$$  We think of $X_i^n$ as the nth order space of uncertainty for player i, and $\Delta(X_i^n)$ represents the space of $n^{th}$-order beliefs for player $i$.  Thus $\delta \in \Delta(X_i^n)$ is a measure for player $i$ over the strategies for player $j$ and the beliefs of player $j$ at every level up to and including level $n-1$.  For each i, let $$T^0_i = \prod_{n>0} \Delta(X^{n-1}_i)$$
\noindent which is Polish in the product topology.  $T_i \subset T_i^0$ will be a certain closed subspace.
Let $T^{1}_i$ consist of all sequences $$\delta_i = (\delta_i^1,\delta_i^2,...) \in T^0_i,$$ that is each $\delta_i^n \in \Delta(X_i^{n-1})$ for $n>0$, which are coherent in the sense that $$\mbox{marg}_{X^{n-2}_i}(\delta^n_i) = \delta^{n-1}_i$$ for every $n>1$.
$T^{1}_i$ is a closed subspace of $T^{0}_i$ and hence Polish.  For each $\delta_i \in T^{1}_i$ there is $\delta^{*}_i \in \Delta(X_j \times T^0_j)$ with $\delta^{n+1}_i = \mbox{marg}_{X_i^n} \delta^{*}_i$ for every $n$ by the Kolmogorov Extension Theorem.  This defines a homeomorphism  $$\lambda_i :T^{1}_i \rightarrow \Delta(X_j \times T_j^{0})$$
\noindent given by $\lambda_i(\delta_i) = \delta^{*}_i$.  For $k>1$ define $$T^{k}_i = \{ \delta_i \in T^{1}_{i} \ | \ \delta_i^{*}(X_j \times T_j^{k-1})=1 \}$$
and let $$T_i = \bigcap_{k<\omega} T_i^k,$$
\noindent which is closed and hence Polish.  For $\delta_i \in T_i$ we have 
$\delta_i^{*}(X_j \times T_j) =1$ so 
it follows that $$\lambda_i :T_i \rightarrow \Delta(X_j \times T_j)$$ 
\noindent is a homeomorphism (see Prop. 1 and Prop. 2 of \cite{brand2}).  The elements of $T_i$ are called types for player $i$ and are said to satisfy common certainty of coherency.  \end{defn}

\begin{rem} (Cylinder Sets) Suppose $\delta_i \in T_i$ is a type.  Suppose $B$ is a Borel subset of  $$X_i^n = X_j \times \Delta(X_j^0) \times \Delta(X_j^1) \times ... \times \Delta(X_j^{n-1}).$$  Then 
$$\lambda_i(\delta_i)( \{ (t,\delta_j) \in X_j \times T_j \ | \ (t,\delta_j^1,...,\delta_j^n) \in B \} ) = \delta_i^{n+1}(B).$$  This is by definition of the Kolmogorov Extension in our setting.  The measure $\lambda_i(\delta_i)$ is uniquely determined by its values on the cylinder sets.

\end{rem}

\begin{rem}  The version of the Kolmogorov Extension Theorem we use here only makes use of the Axiom of Dependent Choice.  \end{rem}

\begin{defn} (Hereditary Coherence) We say a measure, specifically an $n^{th}$ order belief, is hereditarily coherent if it concentrates on coherent sequences of hereditarily coherent measures.  Similarly, we say a type is hereditarily coherent if it is a coherent sequence of hereditarily coherent measures.  More precisely, we say $(s,\delta_j^1,...,\delta_j^n) \in X_i^n$ is coherent if $$\mbox{marg}_{X^{k-2}_j}(\delta^k_j) = \delta^{k-1}_j$$ for $k=2,...,n$.  We say $\delta_i^{n+1} \in \Delta(X_i^n)$ concentrates on coherent sequences if the set of coherent sequences $(s,\delta_j^1,...,\delta_j^{n}) \in X_i^n$ as above is assigned measure one by $\delta_i^{n}$.  Now we inductively define $\delta^{n+1}_i$ to be hereditarily coherent, for $n \geq 2$, as follows.  For $n=2$ we require only that $\delta^{n+1}_i$ concentrate on coherent sequences.   For $n > 2$ we require that it concentrates on coherent sequences of hereditarily coherent measures.  
\end{defn}

\begin{lem}  (Local Version of Common Certainty of Coherency) Suppose $$\delta_i = (\delta_i^1,\delta_i^2,...) \in T_i^1$$ is a coherent sequence of measures.  Then $\delta_i \in T_i$, that is $\delta_i$ satisfies common certainty of coherency, if and only if each $\delta_i^n$ is a hereditarily coherent measure.  \end{lem}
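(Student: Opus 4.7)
The plan is to use the cylinder-set formula of Remark 4.2 to translate between $\lambda_i(\delta_i)$-measure of subsets of $X_j\times T_j^0$ and $\delta_i^{n+1}$-measure of their initial-segment projections to $X_i^n$, and to prove both implications by a joint induction carried out symmetrically across the two players. Throughout, let $H_n\subseteq X_i^n$ denote the set of coherent sequences $(s,\delta_j^1,\ldots,\delta_j^n)$ for which each $\delta_j^k$ with $3\le k\le n$ is itself hereditarily coherent, so that $\delta_i^{n+1}$ being hereditarily coherent amounts to $\delta_i^{n+1}(H_n)=1$, and let $\widetilde{H}_n\subseteq X_j\times T_j^0$ denote the corresponding cylinder.

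For the forward direction, suppose $\delta_i\in T_i$, so that $\lambda_i(\delta_i)(X_j\times T_j)=1$. I would prove by induction on $n\geq 2$ that $\delta_i^{n+1}$ is hereditarily coherent. The cylinder formula gives $\delta_i^{n+1}(H_n)=\lambda_i(\delta_i)(\widetilde{H}_n)$, so it is enough to check $X_j\times T_j\subseteq\widetilde{H}_n$. For any $(s,\delta_j)\in X_j\times T_j$, coherence of $(\delta_j^1,\ldots,\delta_j^n)$ is immediate from $T_j\subseteq T_j^1$, and hereditary coherence of each $\delta_j^k$ for $3\le k\le n$ is the inductive hypothesis applied to player $j$ at level $k-1$.

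For the backward direction, assume each $\delta_i^n$ is hereditarily coherent. I would prove by induction on $k\geq 1$ that $\lambda_i(\delta_i)(X_j\times T_j^k)=1$, and then take the countable intersection to obtain $\delta_i\in T_i$. The base $k=1$ uses that $T_j^1=\bigcap_{n\geq 2}\{\delta_j:\mathrm{marg}_{X_j^{n-2}}(\delta_j^n)=\delta_j^{n-1}\}$, and each condition in the intersection pulls back via the cylinder formula to a coherence condition on $(s,\delta_j^1,\ldots,\delta_j^n)$ that has $\delta_i^{n+1}$-measure one by hereditary coherence. For the step $k\to k+1$, the cylinder formula applied at each $n\geq 3$ yields $\lambda_i(\delta_i)(\{(s,\delta_j):\delta_j^n\text{ is hereditarily coherent}\})=1$, since hereditary coherence of $\delta_i^{n+1}$ forces its concentration set to have each $\delta_j^n$ hereditarily coherent. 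Intersecting over $n$ and combining with the $k=1$ case, $\lambda_i(\delta_i)$-almost every $\delta_j$ is coherent with every $\delta_j^n$ hereditarily coherent; the stage-$k$ inductive hypothesis applied to player $j$ then gives $\lambda_j(\delta_j)(X_i\times T_i^k)=1$, so $\delta_j\in T_j^{k+1}$.

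The main obstacle is the apparent circularity in the backward direction, where to conclude $\lambda_i(\delta_i)$-almost every $\delta_j$ lies in $T_j$ is precisely the backward implication for player $j$. The resolution is to prove the stratified statements $\lambda_i(\delta_i)(X_j\times T_j^k)=1$ by induction on $k$, in parallel for both players, rather than proving the full equivalence at once: the step at level $k+1$ only requires the same statement at level $k$ for the opposing player, which the joint induction supplies. Everything else is bookkeeping with cylinder sets and countable additivity.
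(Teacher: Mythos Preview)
Your proof is correct and follows essentially the same approach as the paper's: both relate the stratification $T_i^{k}$ to levels of hereditary coherence via the cylinder-set identity of Remark~4.2, with the induction carried out jointly for the two players. The only organizational difference is that the paper introduces auxiliary sets $M_k$ (measures that are ``hereditarily coherent to depth $k$'') and proves the single equivalence $T_i^{k+1}=\{\text{coherent sequences of measures in }M_k\}$ by induction on $k$, thereby obtaining both directions at once, whereas you run the forward and backward directions as separate inductions; the content is the same.
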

\begin{proof} Let $M_1$ be the set of $n^{th}$-order beliefs, for any $n$, which concentrate on coherent sequences.  Let $M_{k+1}$ denote the set of $n^{th}$-order beliefs for any $n$ which types concentrate on coherent sequences of measures in $M_k$.  Then it is easy to see by induction, using Remark 4.2, that $T_i^{k+1}$, from Definition 4.1, is equal to the set of coherent sequences of measures in $M_{k}$.  Moreover, it is easy to see that the intersection $T_i$ is precisely the set of coherent sequences of hereditarily coherent measures. \end{proof}

\begin{defn} (Local Version of RCBR) We need a local reformulation of Definition 3.3, that is the notion of a type $\delta_i$ witnessing $s \in \text{RCBR}(\text{E})_i$.  We start by defining 
$$\text{RCBR}(\text{E})_i^{*,n} \subseteq X_i \times T_i \upharpoonright (n+1) \subset X_i \times (\Delta(X_i^0) \times \Delta(X_i^1) \times ... \times \Delta(X_i^n))$$ for $n\geq 1$.  For $n=1$ we define $\text{RCBR}(\text{E})_i^{*,1} \subseteq X_i \times \Delta(X_j)$ by $$\text{RCBR}(\text{E})_i^{*,1} = \text{E}_i.$$  Similarly we can define 
$$\text{RCBR}(\text{E})_i^{*,2} \subseteq X_i \times \Delta(X_j) \times \Delta(X_j \times \Delta(X_i) ) $$ by $\text{RCBR}(\text{E})_i^{*,2} =$ $$ \{ (s,\mu,\nu) \ | \ (s,\mu) \in \ \text{RCBR}(\text{E})_i^{*,1} \wedge \mu = \mbox{marg}_{X_j}(\nu) \wedge \nu (\text{RCBR}(\text{E})_j^{*,1}) = 1 \} = $$ $$ \{ (s,\mu,\nu) \ | \ (s,\mu) \in \ \text{E}_i \wedge \mu = \mbox{marg}_{X_j}(\nu) \wedge \nu (\text{E}_j) = 1 \}    $$
\noindent For the general case, we define $\text{RCBR}(\text{E})_i^{*,n}$ to be the set of hereditarily coherent (see Definition 4.4) sequences $(s,\delta_i^1,...,\delta_i^n)$ with $$(s,\delta_i^1,...,\delta_i^{n-1}) \in  \text{RCBR}(\text{E})_i^{*,n-1}$$ satisfying $$\delta_i^{n} (\text{RCBR}(\text{E})_j^{*,n-1}) = 1 .$$ \end{defn}

\begin{rem}  Note that if $(s,\delta_i^1,...,\delta_i^{n-1})$ is hereditarily coherent, and $\delta_i^{n} \in \Delta(X_i^{n-1})$ concentrates on hereditarily coherent measures and coheres in that $$\mbox{marg}_{X_i^{n-2}}(\delta_i^{n}) =  \delta_i^{n-1},$$ then 
$(s,\delta_i^1,...,\delta_i^{n})$ is hereditarily coherent.  
\end{rem}

\begin{lem}  Suppose $s \in X_i$ and $\delta_i = (\delta_i^1, \delta_i^2, \delta_i^3,...) \in T_i$.  Then for every $n>0$, $$(s,\delta_i) \in \text{RCBR}(\text{E})_i^n \mbox{  if and only if  } (s,\delta_i^1,...,\delta_i^n) \in \text{RCBR}(\text{E})_i^{*,n}.$$

\noindent Moreover, if $s \in X_i$ and $(s,\mu_i^1,...,\mu_i^n) \in  \text{RCBR}(\text{E})_i^{*,n}$ then there is $\delta_i$ with $$(s,\delta_i) \in \text{RCBR}(\text{E})_i^n$$ and $(\mu_i^1,...,\mu_i^n) = (\delta_i^1,...,\delta_i^n).$

\end{lem}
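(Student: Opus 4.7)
The approach is a double induction on $n$, with the cylinder-set identity of Remark 4.2 as the bridge between the global type-level conditions $\text{RCBR}(\text{E})_i^n$ and the local finite-level conditions $\text{RCBR}(\text{E})_i^{*,n}$.

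For the first equivalence, I will induct on $n$, carrying the argument in parallel for both players. The base case $n=1$ is immediate from the construction of $\lambda_i$ in Definition 4.1: the first-order marginal $\text{marg}_{X_j}\lambda_i(\delta_i)$ is precisely $\delta_i^1$, so $(s,\delta_i)\in \text{RCBR}(\text{E})_i^1$ is the same as $(s,\delta_i^1)\in \text{E}_i = \text{RCBR}(\text{E})_i^{*,1}$. For the inductive step I will decompose $(s,\delta_i)\in \text{RCBR}(\text{E})_i^{n+1}$ into the conjunction of $(s,\delta_i)\in \text{RCBR}(\text{E})_i^n$ (which the inductive hypothesis for player $i$ handles) and $\lambda_i(\delta_i)(\text{RCBR}(\text{E})_j^n)=1$. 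For the latter, the inductive hypothesis for player $j$ identifies $\text{RCBR}(\text{E})_j^n$ with the cylinder set $\{(t,\delta_j)\in X_j\times T_j : (t,\delta_j^1,\dots,\delta_j^n)\in \text{RCBR}(\text{E})_j^{*,n}\}$, and Remark 4.2 then equates its $\lambda_i(\delta_i)$-measure with $\delta_i^{n+1}(\text{RCBR}(\text{E})_j^{*,n})$. Hereditary coherence of each $\delta_i^k$ is automatic from $\delta_i\in T_i$ via Lemma 4.5 and the unnumbered remark after Definition 4.6, completing the match with the definition of $\text{RCBR}(\text{E})_i^{*,n+1}$.

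For the moreover clause I will run a parallel induction on $n$. In the base case I pick any $\tau\in T_j$, form $\nu=\mu_i^1\otimes\delta_\tau\in \Delta(X_j\times T_j)$, and set $\delta_i=\lambda_i^{-1}(\nu)$, so $\delta_i^1=\mu_i^1$ and the first part at level $1$ gives $(s,\delta_i)\in \text{RCBR}(\text{E})_i^1$. For the inductive step, given $(s,\mu_i^1,\dots,\mu_i^n)\in \text{RCBR}(\text{E})_i^{*,n}$, I will apply Lubin's Lemma (Lemma 2.11) to lift $\mu_i^n$ to a measure $\nu\in \Delta(X_j\times T_j^0)$ with $\text{marg}_{X_i^{n-1}}\nu=\mu_i^n$ concentrating on the set $A=\{(t,\delta_j)\in X_j\times T_j : (t,\delta_j^1,\dots,\delta_j^{n-1})\in \text{RCBR}(\text{E})_j^{*,n-1}\}$. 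The inductive hypothesis of the moreover for player $j$ at level $n-1$ ensures that $\text{proj}_{X_i^{n-1}}(A)=\text{RCBR}(\text{E})_j^{*,n-1}$, which carries $\mu_i^n$-measure one by hypothesis, so Lubin applies. Setting $\delta_i=\lambda_i^{-1}(\nu)\in T_i$, the coherence of the given sequence forces $\delta_i^k=\text{marg}_{X_i^{k-1}}\nu=\mu_i^k$ for $k\leq n$, and Part 1 at level $n$ yields $(s,\delta_i)\in \text{RCBR}(\text{E})_i^n$.

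The principal obstacle will be the measurability needed for Lubin's Lemma: the cylinder $A$ must be at least analytic, which I expect to obtain by an auxiliary induction from the standing hypothesis that $\text{E}$ is universally measurable, together with stability of this property under cylindrical lifting and the iterative construction of $\text{RCBR}(\text{E})_j^{*,n}$ (cf.\ Remark 3.4). A secondary subtlety is that $\nu$ must concentrate on $X_j\times T_j$ rather than merely $X_j\times T_j^0$; this becomes automatic once $A\subseteq X_j\times T_j$, which in turn rests on the inductive moreover guaranteeing that every sequence in $\text{RCBR}(\text{E})_j^{*,n-1}$ extends to a full type in $T_j$.
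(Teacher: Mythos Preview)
Your Part 1 argument is correct and is exactly the paper's induction, with the cylinder identity of Remark 4.2 translating the type-level concentration condition $\lambda_i(\delta_i)(\text{RCBR}(\text{E})_j^n)=1$ into the finite-level condition $\delta_i^{n+1}(\text{RCBR}(\text{E})_j^{*,n})=1$.

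For the moreover clause your plan works in outline but creates a self-inflicted difficulty, and this is precisely the ``principal obstacle'' you flag. You ask the Lubin lift of $\mu_i^n$ to concentrate on
\[
A=\{(t,\delta_j)\in X_j\times T_j : (t,\delta_j^1,\dots,\delta_j^{n-1})\in \text{RCBR}(\text{E})_j^{*,n-1}\},
\]
which forces you to show $A$ is analytic; this depends on $\text{E}$, and for merely universally measurable $\text{E}$ it can genuinely fail (universal measurability does not yield analyticity, so Lemma 2.11 as stated does not apply). The paper avoids this entirely: it lifts $\mu_i^n$ only to the \emph{closed} set $X_j\times T_j\subseteq X_j\times T_j^0$, which is analytic irrespective of $\text{E}$. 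Your own inductive moreover hypothesis for player $j$ at level $n-1$ already gives $\text{RCBR}(\text{E})_j^{*,n-1}\subseteq \text{proj}_{X_i^{n-1}}(X_j\times T_j)$, and since $\mu_i^n(\text{RCBR}(\text{E})_j^{*,n-1})=1$ by hypothesis, $\mu_i^n$ is supported on that (analytic) projection and Lubin applies with no assumption on $\text{E}$ whatsoever. After setting $\delta_i=\lambda_i^{-1}(\nu)$ and reading off $\delta_i^k=\mu_i^k$ for $k\le n$ from coherence, you already plan to invoke Part 1 at level $n$ to conclude $(s,\delta_i)\in\text{RCBR}(\text{E})_i^n$; that step does all the work, so there is no need to bake the $\text{RCBR}$ constraint into the lift target, and the analyticity obstacle simply evaporates.
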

\begin{proof}  By induction.  For the case $n = 1$ we have $\mbox{marg} (\lambda_i(\delta_i)) = \delta_i^1$ 
which suffices.  For $n=2$, the condition becomes  $\lambda_i(\delta_i)( \text{RCBR}(\text{E})_i^1) = 1$ which is equivalent to $$\delta_i^2(\text{RCBR}(\text{E})_j^{*,1}) ) = 1.$$  And so on.  For the second part, suppose $(s,\mu) \in \text{RCBR}(\text{E})_i^{*,1}$.  We can lift $\mu$ to $\hat{\mu} \in \Delta(X_j \times T_j)$ by Lemma 2.11 which must be $\lambda_i(\delta_i)$ for some $\delta_i \in T_i$.  We will have $\mu = \delta_i^1$ so $(s,\delta_i) \in \text{RCBR}(\text{E})_i^1$.  The general case is the same.
\end{proof}

\begin{thm}  Assume $\mathsf{AD}_{\mathbb{R}}$.  Suppose that $\text{E}$ is a pair of relations between strategies and beliefs on underlying Polish spaces.  Then $$ \text{RAT}(\text{E}) = \text{RCBR}(\text{E}).$$  \end{thm}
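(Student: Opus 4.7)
The reverse containment $\text{RCBR}(\text{E}) \subseteq \text{RAT}(\text{E})$ is already Lemma 3.5, so the task is to prove $\text{RAT}(\text{E}) \subseteq \text{RCBR}(\text{E})$. The plan is to exploit two principal features of $\mathsf{AD}_{\mathbb{R}}$: every subset of a Polish space is universally measurable (so every map between Polish spaces is universally measurable, and pushforwards of Borel probabilities along such maps remain Borel probabilities), and every relation on a product of Polish spaces admits a uniformization. Using these I will construct, in parallel for both players, universally measurable type selectors $t^j : \text{RAT}(\text{E})_j \to T_j$ and then cash them in via Lemma 4.8.

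First, for each player $j$, I fix a universally measurable uniformization
$$u_j : \text{RAT}(\text{E})_j \to \Delta(X_i)$$
of the restricted best-response relation $\{(s,\mu) : (s,\mu) \in \text{E}_j,\ \mu(\text{RAT}(\text{E})_i) = 1\}$, extended arbitrarily to $X_j$. Such a $u_j$ exists because $\text{RAT}(\text{E})$ is an $\text{E}$-justified rectangle, so the displayed relation projects onto $\text{RAT}(\text{E})_j$, and $\mathsf{AD}_{\mathbb{R}}$ delivers the uniformization. Then I define, recursively on $n \geq 1$ and for both players at the same stage, maps $\delta_j^n : \text{RAT}(\text{E})_j \to \Delta(X_j^{n-1})$ by
$$\delta_j^1(s) := u_j(s), \qquad \delta_j^{n+1}(s) := (\Phi_i^n)_{*}\, u_j(s),$$
where $\Phi_i^n : X_i \to X_j^n$ sends $r \mapsto (r, \delta_i^1(r), \ldots, \delta_i^n(r))$ (defined on $\text{RAT}(\text{E})_i$ and extended arbitrarily elsewhere). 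Each $\Phi_i^n$ is universally measurable, so the pushforward is a Borel probability on $X_j^n$.

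Two properties are then verified by induction on $n$. Coherence $\mbox{marg}_{X_j^{n-1}}(\delta_j^{n+1}(s)) = \delta_j^n(s)$ is immediate by dropping the last coordinate in $\Phi_i^n$. Hereditary coherence of each $\delta_j^n(s)$ follows because $u_j(s)$ concentrates on $\text{RAT}(\text{E})_i$, where by induction the tuples $(r, \delta_i^1(r), \ldots, \delta_i^n(r))$ are coherent sequences of hereditarily coherent measures. By the local version of common certainty of coherency (Lemma 4.5), the resulting sequences $t^j(s) := (\delta_j^1(s), \delta_j^2(s), \ldots)$ belong to $T_j$. For the local RCBR condition $(s, \delta_j^1(s), \ldots, \delta_j^n(s)) \in \text{RCBR}(\text{E})^{*,n}_j$, the base case $n=1$ is the defining property of $u_j$, and the inductive step reads
$$\delta_j^{n+1}(s)\bigl(\text{RCBR}(\text{E})^{*,n}_i\bigr) = u_j(s)\bigl(\{r : (r, \delta_i^1(r), \ldots, \delta_i^n(r)) \in \text{RCBR}(\text{E})^{*,n}_i\}\bigr) = 1,$$
since the event contains $\text{RAT}(\text{E})_i$ by the inductive hypothesis and has $u_j(s)$-measure one. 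Lemma 4.8 then converts this into $(s, t^j(s)) \in \text{RCBR}(\text{E})^n_j$ for every $n$, so $s \in \text{RCBR}(\text{E})_j$.

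The main obstacle is the simultaneous character of the recursion: defining $\delta_j^{n+1}$ requires $\delta_i^n$ of the \emph{other} player, and vice versa, so each level must be completed for both players before the next level is started. Secondary technical worries --- that the pushforward of a Borel probability along a universally measurable map is Borel, and that the events above are $u_j(s)$-measurable so the measure equalities make sense --- dissolve under the universal measurability $\mathsf{AD}_{\mathbb{R}}$ grants. A purely game-theoretic alternative via the auxiliary real Gale--Stewart game $G_s^{\text{E}}$ sketched in the introduction would also work: under $\mathsf{AD}_{\mathbb{R}}$ a winning strategy for Player I exists and, being automatically measurable, encodes the required hierarchy of beliefs.
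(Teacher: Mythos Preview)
Your proposal is correct and follows essentially the same approach as the paper: both fix a uniformizing belief selector $u_j$ (the paper's $\mu^j$) via $\mathsf{AD}_{\mathbb{R}}$-uniformization, then build the hierarchy $\delta_j^{n}(s)$ as the pushforward of $u_j(s)$ along $r\mapsto (r,\delta_i^1(r),\ldots,\delta_i^{n-1}(r))$, invoking universal measurability to ensure these pushforwards are Borel probabilities, and use Lemma~4.5 for common certainty of coherency. The only cosmetic difference is that the paper concludes by computing $\lambda_i(t^i(s))$ on cylinder sets to show $\lambda_i(t^i(s))(\mathrm{graph}(t^j))=1$ and then runs the induction $t^i\subseteq\text{RCBR}(\text{E})^n_i$ directly, whereas you invoke the local characterization (Definition~4.6 and Lemma~4.8); these are equivalent packagings of the same verification.
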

\begin{proof} We will use two consequences of $\mathsf{AD}_{\mathbb{R}}$, uniformizations and measurability.

\begin{enumerate}
    \item If $X$ is Polish and $\mu \in \Delta(X)$ then every subset of $X$ is $\mu$-measurable
    \item If $X,Y$ are Polish and $A \subseteq X \times Y$ then there is $B \subseteq A$ such that for every $x \in X$ if there is $y$ with $(x,y) \in A$ then there is a unique $y$ with $(x,y) \in B$
\end{enumerate}

\noindent That all sets are Lebesgue measurable follows from $\mathsf{AD}$ (see 27.9 of \cite{kanamori}), and hence (1) follows by the argument of 17.41 and 22.10 of \cite{kechris}, while uniformization (see \cite{larson} and Theorem 6.8 of \cite{larson2}) is actually equivalent to $\mathsf{AD}_{\mathbb{R}}$ in the presence of $\mathsf{AD}$.  By Lemma 3.5, we may assume that each $\text{RAT}(\text{E})_i$ is nonempty.  By (2) there is a uniformization map $$\mu^i: \text{RAT}(\text{E})_i \rightarrow \Delta(X_j)$$ 
with $(s,\mu^i(s)) \in \text{E}_i$ and $$\mu^i(s) (\text{RAT}(\text{E})_j) = 1$$ for every $s \in \text{RAT}(\text{E})_i$.  We will show that this map lifts to a map $$t^i: \text{RAT}(\text{E})_i \rightarrow T_i$$ with the property $$\lambda_i(t^i(s))(S) = \mu^{i}(s)(\mbox{proj}_{X_j}(S \cap t^j))$$ for any Borel set $S \subseteq X_j \times T_j$.  Here we are identifying $t^j \subset X_j \times T_j$ with its graph.  So in particular $$\mu^i(s) = \mbox{marg}_{X_j}(\lambda_i(t^i(s))).$$  Given this, it follows that $t^i(s)$ witnesses $s \in \text{RCBR}(\text{E})_i$.  To see this, we show by induction on $n$ that $$t^{i} \subseteq \text{RCBR}(\text{E})^n_i$$ for each $i$, again identifying $t^i$ with its graph.  The base case $n=1$ follows as $$(s,\mbox{marg}_{X_j}(\lambda_i(t_i(s))) = (s,\mu^i(s)) \in \text{E}_i.$$  For $n>1$ we have $(s,t^i(s)) \in \text{RCBR}(\text{E})_i^{n}$ by induction hypothesis, and since $\lambda_i(t^i(s)) (t^j) = 1$ we have $$\lambda_i(t^i(s))(\text{RCBR}(\text{E})_j^{n}) = 1$$ also by induction hypothesis, hence $(s,t^i(s)) \in \text{RCBR}(\text{E})_i^{n+1}$.  

It remains to define the lift maps $t^i$.   For each $j$ and $s \in \text{RAT}(\text{E})_j$ define $\delta_j^1(s) = \mu^j(s)$ and by induction on $k$, for each $j$, define for a Borel set $S$,
$$\delta_j^{k+1}(s)(S) = \mu^j(s)( \{ t \ | \    (t,\delta_i^1(t),...,\delta_i^{k}(t)) \in S \}.$$  By our measurability hypothesis, $\delta_j^{k+1}(s) \in \Delta(X_j^{k})$ is a Borel measure.  Set $$t^j(s) = (\delta_j^{1}(s),\delta_j^{2}(s),\delta_j^{3}(s),..).$$  
It is straightforward to check that $$\mbox{marg}_{X_j^{k-1}}(\delta_j^{k+1}) = \delta_j^{k}$$
for each $j$ and $k$ so by induction and Lemma 4.5, $t^j(s)$ satisfies common certainty of coherence, and we conclude that $t^j(s) \in T_j$.

We want to show that $$\lambda_i(t^i(s))(S) = \mu^{i}(s)(\mbox{proj}_{X_j}(S \cap t^j))$$ for any Borel set 
$S \subseteq X_j \times T_j$.  Note that $\lambda_i$ is obtained by the Kolmogorov Extension Theorem (see 17.16 of \cite{kechris} and Remark 4.2) and so it is immediate that $\lambda_i(t^i(s))(t_j) = 1$ as $t^j$ is a countable intersection of cylinder sets assigned measure one by the marginals.  That is $$\lambda_i(t^i(s))( \hat{C}_n ) = \delta^{n+1}_i(s)(C_n) = \mu^i(s)(\text{RAT}(\text{E})_j) = 1$$ where $$C_{n} = \{ (t, \delta_j^1(t),...,\delta_j^n(t)) \ | \ t \in \text{RAT}(\text{E})_j \}$$ and $$\hat{C}_n = \{ (t,\delta_j) \in X_j \times T_j \ | \  \forall k \leq n \ \delta_j^k = \delta_j^k(t) \} .$$  Thus $$\lambda_i(t^i(s))( \bigcap_{n<\omega} \hat{C}_n ) = \lambda_i(t^i(s))(t^j) = 1.$$

\noindent It similarly follows that $$\lambda_i(t^i(s))(S) = \lambda_i(t^i(s))(S \cap t^j)) = \mu^{i}(s)(\mbox{proj}_{X_j}(S \cap t^j))$$ for cylinder sets and by extension for Borel sets.
\end{proof}

\begin{cor}   $\mathsf{AD}_{\mathbb{R}}$ implies that Rationalizability is equivalent to Rationality and Common Belief in Rationality for any strategic form game with bounded payoffs played on Polish strategy spaces.  \end{cor}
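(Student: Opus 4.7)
The plan is to reduce directly to Theorem 4.8 by specializing to the best response relation of the game, mirroring the reduction carried out in Corollary 3.7. Given a game $G = (\pi_1,\pi_2)$ with bounded payoffs on Polish strategy spaces $X_1,X_2$, I would form the best response relations
$$\text{E}^G_i = \{(s,\mu) \in X_i \times \Delta(X_j) \ | \ \forall t \in X_i \ \pi_i(s,\mu) \geq \pi_i(t,\mu) \},$$
and set $\text{E} = (\text{E}^G_1,\text{E}^G_2)$. By construction $\text{RAT}(\text{E}) = \text{RAT}(G)$ and $\text{RCBR}(\text{E}) = \text{RCBR}(G)$, so the corollary reduces entirely to the conclusion $\text{RAT}(\text{E}) = \text{RCBR}(\text{E})$ supplied by Theorem 4.8.

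The one preliminary point to verify is that the relations $\text{E}^G_i$ are well-defined under $\mathsf{AD}_{\mathbb{R}}$, i.e., that the extension of each bounded $\pi_i$ to $X_i \times \Delta(X_j)$ (and to $\Delta(X_i) \times \Delta(X_j)$) by integration makes sense. This requires $\pi_i$ to be $\mu$-measurable for every $\mu \in \Delta(X_1 \times X_2)$. But $\mathsf{AD}_{\mathbb{R}}$ entails $\mathsf{AD}$, which by the argument of 17.41 and 22.10 of \cite{kechris} recalled in the proof of Theorem 4.8 implies that every subset of a Polish space is universally measurable. Since $\pi_i$ is bounded, Fubini then legitimizes the iterated integral $\pi_i(\mu,\nu) = \int\!\int \pi_i \, d\mu \, d\nu$ and the best response condition is well-posed.

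The main obstacle, such as it is, was already absorbed into Theorem 4.8 itself: the uniformization of the best response correspondence and the measurable lifting to the hierarchy of beliefs. Here it remains only to invoke that theorem. Thus the proof is essentially a one-line reduction: apply Theorem 4.8 to $\text{E}^G$, and then substitute $\text{RAT}(\text{E}^G) = \text{RAT}(G)$ and $\text{RCBR}(\text{E}^G) = \text{RCBR}(G)$ to obtain $\text{RAT}(G) = \text{RCBR}(G)$ for every such game $G$.
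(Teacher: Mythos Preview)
Your proof is correct and follows exactly the paper's approach (the paper's proof reads in full ``Just like Corollary 3.7''), namely specializing to the best response relation $\text{E}^G$ and invoking the preceding theorem; your added remark about universal measurability of the payoffs under $\mathsf{AD}_{\mathbb{R}}$ is a welcome clarification. One minor correction: the theorem you cite as ``Theorem 4.8'' is numbered 4.9 in the paper (4.8 is a lemma).
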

\begin{proof}  Just like Corollary 3.7.  \end{proof}

\begin{rem}  The proofs of the Fundamental Theorem from Theorem 
2.12 and 3.6 used only the existence of a type structure.  We now reprove Theorem 3.6 using belief hierarchies.  Let 
$$C^i:\mathcal{K}(\Delta(X_i)): \rightarrow \Delta(X_i)$$ be a Borel map (see 12.13 of \cite{kechris}) on the Polish space of compact subsets of $\Delta(X_i)$ that selects an element from any non-empty compact set of measures, noting that each $\Delta(X)_i)$ is compact.  By the Portmanteau Theorem and assumption on $\text{E}_i$, $\text{RAT}(\text{E})_i$ is compact and so the map 
$$\mu^i(s) = C^j( \{ \mu \in \Delta(X_j) \ | \ (s,\mu) \in \text{E}_i \wedge \mu( \text{RAT}(\text{E})_j) = 1 \} )$$ for $s \in \text{RAT}(\text{E})_i$ is Borel Measurable.  Define $\delta_i^1(s) = \mu^{i}(s)$ for each such $s$ and each $i$.  By induction we define 

$$\delta^{n+1}_i(s) = (\text{id}_{X_j},\delta_j^1,...,\delta_j^n)_{*} (\delta_i^1(s))$$
meaning that $\delta^{n+1}(s)$ is the pushforward of the measure $\delta_i^1(s)$ by the map sending $t \in \text{RAT}(\text{E})_j$ to 
$$(\text{id}_{X_j}(t),\delta_j^1(t),...,\delta_j^n(t)).$$
We then define $t^i(s) \in T_i$ by $$t^i(s) = (\delta_i^1(s),\delta^2_i(s),...)$$
for $s \in \text{RAT}(\text{E})_i$, and observe that $$\lambda_i(t^i(s))( \{ (t,\delta_j^1(t),\delta_j^2(t),...) \ | \ t \in \text{RAT}(\text{E})_j \}) = \lambda_i(t^i(s))( \text{graph}(t^j) ) = 1,$$ as in Theorem 4.9., and that $\mbox{marg}_{X_j}(\lambda_i(t^i(s))) = \delta_i^1(s) = \mu^i(s)$.  It then follows that $\text{graph}(t^i) \subseteq \text{RCBR}(\text{E})_i$ as desired.  \end{rem}

\section{Belief Hierarchies and Determinacy II}

We continue the analysis of the last section by defining a real Gale-Stewart game which characterizes $\text{E}$-rationalizability.  We will see that $\text{RCBR}(\text{E})$ follows from the existence of a measurable winning strategy which gives another proof of the fundamental theorem under Determinacy.

\begin{defn}  Suppose that $\text{E}$ is a pair of  relations between strategies and beliefs on underlying Polish spaces $X_i$.  Given $s \in X_i$ let $G^{\text{E}}_s$ denote the following two-player game.  For $n < \omega$, player I plays $(\mu_n,b_n)$ and then II plays $s_{n+1}$.  Setting $s_0 = s$, for each $n$ we must have
\begin{enumerate}
    \item  If $n$ is even then $\mu_n \in \Delta(X_j)$, $(s_n,\mu_n) \in \text{E}_i$ and $b_n \subseteq X_j$ is Borel with $\mu_n(b_n) = 1$
    \item  If $n$ is odd then $\mu_n \in \Delta(X_i)$, $(s_n,\mu_n) \in \text{E}_j$ and $b_n \subseteq X_i$ is Borel with $\mu_n(b_n) = 1$ 

    \item $s_{n+1} \in b_n$
\end{enumerate}

\noindent That is, the first player to break these rules loses.  I wins if the play continues for every stage.  Note that if for any $n$, the strategy $s_{n+1}$ is such that there is no $\mu_{n+1}$ with $(s_{n+1},\mu_{n+1})$ in the appropriate relation $E_i$, then II necessarily wins (in the case that $\text{E}$ is the best response relation from a game this would happen if $s_{n+1}$ is not a best response strategy).  Thus, a play of the game where I wins is a infinite sequence $$(s_0,(\mu_0,b_0),s_1,(\mu_1,b_1),s_2,(\mu_2,b_2),s_3,....)$$ according to the rules above.  This game is easily coded as a real game, since Borel sets and Borel measures can be coded by reals in some fixed manner.  \end{defn}

\begin{prop} Suppose that $\text{E}$ is a pair of  relations between strategies and beliefs on underlying Polish spaces $X_i$, and $s \in X_i$. Then the game $G^{E}_s$ is quasi-determined, and I has winning quasi-strategy if and only if $s \in \text{RAT}(\text{E})_i$.  \end{prop}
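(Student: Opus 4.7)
The plan is to prove quasi-determinacy and the two directions of the characterization separately. Quasi-determinacy follows from a Zermelo-style rank analysis applicable to closed real games and requiring only Dependent Choice. I would define $V$ as the least class of positions closed under: (a) every I-turn position at which I has no legal move lies in $V$; (b) every I-turn position all of whose legal successors lie in $V$ lies in $V$; and (c) every II-turn position with some legal successor in $V$ lies in $V$. This is a transfinite induction on rank. Player II has a winning quasi-strategy on $V$ by playing any legal move into a lower-ranked position of $V$. On the complement $V^c$ the dual closure properties hold automatically: at any I-turn in $V^c$ some legal move stays in $V^c$ (else the position would lie in $V$ by clause (b)), and at any II-turn in $V^c$ every legal II-move stays in $V^c$ (else the position would lie in $V$ by clause (c)); hence the quasi-strategy ``stay in $V^c$'' witnesses a win for I, as it always provides a legal move.

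For the direction $s \in \text{RAT}(\text{E})_i \Rightarrow$ I has a winning quasi-strategy, I would prescribe a quasi-strategy maintaining the invariant that the current $s_n$ is $\text{E}$-rationalizable for the player to whom it belongs. At such a position, the definition of $\text{RAT}(\text{E})$ provides a witness $\mu_n$ with $(s_n,\mu_n)$ in the appropriate $\text{E}$-relation and $\mu_n$-measure one on the opponent's rationalizable set (interpreted in the completion, per Remark 2.5). By inner regularity of the completion of a Borel probability measure on a Polish space, there is a Borel set $b_n$ contained in the opponent's $\text{RAT}(\text{E})$ with $\mu_n(b_n)=1$, and I plays $(\mu_n,b_n)$. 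Any legal II-response $s_{n+1}\in b_n$ lies in the opponent's $\text{RAT}(\text{E})$, restoring the invariant.

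For the converse, set $B_i = \{t \in X_i : \text{I has a winning quasi-strategy in } G^{\text{E}}_t\}$ and $B_j$ analogously. I would show $B_1 \times B_2$ is $\text{E}$-justified, which by the maximality of $\text{RAT}(\text{E})$ from Definition 3.1 and Remark 3.2 gives $s \in B_i \subseteq \text{RAT}(\text{E})_i$. Given $t \in B_i$ with winning quasi-strategy $Q$, pick any first move $(\mu_0, b_0)$ of I from $Q$; for every $s_1 \in b_0$ the subtree of $Q$ below $(t,(\mu_0,b_0),s_1)$ is a winning quasi-strategy for the continuation, which is precisely the game $G^{\text{E}}_{s_1}$ after relabeling the player indices, so $s_1 \in B_j$. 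Hence $b_0 \subseteq B_j$; since $b_0$ is Borel with $\mu_0(b_0)=1$ the set $B_j$ has $\mu_0$-measure one in the completion, and combined with $(t,\mu_0) \in \text{E}_i$ this is $\text{E}$-justification of $t$.

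The main technical point, and essentially the only one requiring care, is the conversion at each I-move between the possibly non-Borel rationalizability set (which is all the definition $\text{RAT}(\text{E})$ directly provides) and the Borel witness $b_n$ of full measure that the game's rules demand, supplied by inner regularity of the completion. The remainder is bookkeeping: the symmetric role-swap at each stage of $G^{\text{E}}_s$ and the behavior of quasi-strategies under position-restriction.
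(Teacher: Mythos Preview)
Your argument is correct, but it is organized differently from the paper's. The paper does not invoke a general Gale--Stewart quasi-determinacy argument at all; instead it gets quasi-determinacy as a byproduct of building explicit winning quasi-strategies in both cases. The forward direction (if $s\in\text{RAT}(\text{E})_i$ then I has a winning quasi-strategy) is exactly your invariant argument. For the converse, however, the paper does not use your $\text{E}$-justification route: it shows directly that when $s\notin\text{RAT}(\text{E})_i$, player II has a winning quasi-strategy, by exploiting the elimination rank from Remark~3.2. Given any legal move $(\mu_0,b_0)$ by I, the set $b_0$ cannot be contained in $\text{RAT}(\text{E})_j$, so II picks $s_1\in b_0$ of least possible elimination rank $\gamma_0$; at the next stage $b_1$ cannot be contained in $\text{RAT}(\text{E})_i^{\gamma_0}$, and II again picks an element of minimal rank $\gamma_1<\gamma_0$; the descending ordinal sequence forces I to run out of legal moves in finitely many steps.

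Your decomposition trades that rank descent for two ingredients: a black-box Zermelo argument for closed real games, plus the observation (which the paper records separately as Definition~5.4) that $\{t:\text{I wins }G^{\text{E}}_t\}$ is $\text{E}$-justified and hence contained in $\text{RAT}(\text{E})$. This is a perfectly good alternative and arguably more modular, since it isolates quasi-determinacy from the specific structure of $\text{RAT}(\text{E})$. The paper's route, on the other hand, makes the connection to the transfinite elimination procedure explicit and avoids having to verify the general closed-game quasi-determinacy machinery.
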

\begin{proof}  Suppose $s \in \text{RAT}(\text{E})_i$.  We define a winning quasi-strategy $\sigma$ for player $\text{I}$.  By assumption there is $\mu \in \Delta(X_j)$ such that $(s,\mu) \in \text{E}_i$ and $\mu( \text{RAT}(\text{E})_j) = 1$.  For each such $\mu$ there is a Borel set $b$ such that $b \subseteq \text{RAT}(\text{E})_j$ and $\mu(b) = 1$.  Let $\sigma(\emptyset)$ be the collection of such pairs $(\mu,b)$.  For any such pair $(\mu_0,b_0)$ and any $s_1 \in b_0$ we must have $s_1 \in \text{RAT}(\text{E})_j$ so again there is $\mu_1 \in \Delta(X_i)$ with $(s_1,\mu_1) \in \text{E}_j$ and $\mu( \text{RAT}(\text{E})_i) = 1$.  Thus there is Borel set $b_1$ such that $b_1 \subseteq \text{RAT}(\text{E})_i$ and $\mu_1(b_1) = 1$.  We can collect all such pairs $(\mu_1,b_1)$ into $\sigma( ((\mu_0,b_0),s_1))$.  Continuing in this way we produce a winning quasi-strategy (see 3E of \cite{mosch2}) for player $\text{I}$.

Now suppose $s \notin \text{RAT}(\text{E})_i$ and I plays $(\mu_0,b_0)$ with $(s_0,\mu_0) \in \text{E}_i$ and $\mu_0(b_0) =1$.
We must have $$b_0 \setminus \text{RAT}(\text{E})_j \neq \emptyset$$ so there is a least $\gamma = \gamma_0$ with 
$$b_0 \cap  (\text{RAT}(\text{E})^{\gamma}_j \setminus   \text{RAT}(\text{E})^{\gamma+1}_j) \neq \emptyset$$ and we can collect all such elements $s_1$ in this set into a quasi-strategy for $\text{II}$, that is we set $$\tau((\mu_0,b_0)) = b_0 \cap  (\text{RAT}(\text{E})^{\gamma}_j \setminus   \text{RAT}(\text{E})^{\gamma+1}_j).$$ 

Now let $(\mu_1,b_1)$ be a legal play for $\text{I}$ following some $s_1 \in \tau((\mu_0,b_0)).$  We must have $$b_1 \setminus \text{RAT}(\text{E})^{\gamma_0}_i \neq \emptyset$$ where $\gamma_0$ is the ordinal associated to $\mu_0,b_0$ as above.  So there is a least ordinal $\gamma_1 < \gamma_0$ be such that there is $$s_2 \in \text{RAT}(\text{E})^{\gamma_1}_i \setminus \text{RAT}(\text{E})^{\gamma_{1}+1}_i$$ and we can collect all such $s_2$ into the strategy, that is 

$$\tau( (\mu_0,b_0),s_1, (\mu_1,b_1) ) = b_1 \cap (\text{RAT}(\text{E})^{\gamma_1}_i \setminus \text{RAT}(\text{E})^{\gamma_{1}+1}_i).$$

\noindent  Proceeding in this way we must eventually be able to play $s_n$ which has no accompanying measure in the appropriate $\text{E}_i$, and hence we have defined a winning quasi-strategy for player $\text{II}$.
\end{proof}
\begin{rem}  In $\mathsf{ZFC}$, that is assuming the Axiom of Choice, as one normally does, we can produce a winning strategy (as opposed to quasi-strategy) by the argument above.  Hence, in $\mathsf{ZFC}$, a strategy $s$ is Rationalizable if and only if player $I$ wins the Gale-Stewart game $G^{E}_s$ where $\text{E}$ is the pair of best response relations from the game.  The point is that for Prop. 5.2, we work in $\mathsf{ZF+DC}$, that is in set theory with the Axiom of Dependent Choice $\mathsf{DC}$, which is consistent with determinacy axioms.  \end{rem}

\begin{defn}   It is immediate that the set of strategies $s$ for which I has a winning strategy in $G^{E}_s$ is an $\text{E}$-justified set in the sense of Definition 3.1.  Let us denote these strategies by $\text{RAT}(\text{E})^{*}_i$ for player $i$ and set $\text{RAT}(\text{E})^* = \text{RAT}(\text{E})^{*}_1 \times \text{RAT}(\text{E})^{*}_2$.  Thus $$\text{RAT}(\text{E})^{*} \subseteq \text{RAT}(\text{E})$$ without assuming the axiom of choice. \end{defn}

\begin{prop}  Assume $\mathsf{AD}_{\mathbb{R}}$.  Then $\text{RAT}(\text{E})^{*} = \text{RAT}(\text{E})$.  \end{prop}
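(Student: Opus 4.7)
The plan is to combine the quasi-determinacy result from Proposition 5.2 with Real Determinacy to upgrade winning quasi-strategies to winning strategies. The inclusion $\text{RAT}(\text{E})^{*} \subseteq \text{RAT}(\text{E})$ is already recorded in Definition 5.4 (and holds in $\mathsf{ZF}+\mathsf{DC}$), so only the reverse inclusion requires $\mathsf{AD}_{\mathbb{R}}$.

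First, I would code the game $G^{\text{E}}_s$ as a genuine Gale-Stewart game on $\mathbb{R}^\omega$. At an even round player I's move $(\mu_n, b_n)$ consists of a Borel probability measure and a Borel subset of the appropriate Polish space, both of which can be coded by reals through a fixed Borel-code system; at an odd round player II's move $s_{n+1}$ is an element of a Polish space, again codable by a real. The legality rules (1)--(3) and the condition that I wins if and only if the play continues forever translate into a fixed set $A \subseteq \mathbb{R}^\omega$. Under $\mathsf{AD}_{\mathbb{R}}$, the game with payoff set $A$ is determined: one of the two players has a winning strategy.

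Second, I would show that if $s \in \text{RAT}(\text{E})_i$ then player II has no winning strategy in $G^{\text{E}}_s$. By Proposition 5.2, player I has a winning quasi-strategy $\sigma$. Suppose for contradiction that player II has a winning strategy $\tau$. Using $\mathsf{DC}$, recursively build a single play in which, at each of I's turns, I picks some move in $\sigma$ applied to the current position, and at each of II's turns, II plays $\tau$ of the current position. The resulting play is consistent with $\sigma$, so it is a win for I, and it is consistent with $\tau$, so it is a win for II, contradicting that every play has a unique winner. Hence II has no winning strategy, and by determinacy player I has one, which means $s \in \text{RAT}(\text{E})^{*}_i$.

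The main obstacle here is almost entirely conceptual rather than technical: once the game is correctly coded as a real Gale-Stewart game so that $\mathsf{AD}_{\mathbb{R}}$ applies, the rest is the standard pattern for upgrading quasi-strategies under determinacy. The only points requiring care are the coding step, the book-keeping that Borel measures and Borel sets (together with the measurability constraint $\mu_n(b_n)=1$) all live inside a Polish parameter space whose elements are coded by reals, and the observation that the quasi-strategy-vs-strategy argument goes through in $\mathsf{ZF}+\mathsf{DC}$, which is the background theory in which $\mathsf{AD}_{\mathbb{R}}$ is formulated.
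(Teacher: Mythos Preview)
Your proposal is correct and follows essentially the same approach as the paper: invoke Proposition 5.2 to obtain a winning quasi-strategy for I when $s \in \text{RAT}(\text{E})_i$, apply $\mathsf{AD}_{\mathbb{R}}$ to the coded real game, and observe that the player with the winning quasi-strategy must be the one with the winning strategy. The paper's proof is a single sentence that leaves implicit both the coding (already noted in Definition 5.1) and the quasi-strategy-versus-strategy contradiction via $\mathsf{DC}$ that you spell out; your version is simply a more detailed rendering of the same argument.
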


\begin{proof}   Under $\mathsf{AD}_{\mathbb{R}}$, each game $G^{E}_s$ is determined, and so the equivalence follows from Proposition 5.2 as the player with a winning quasi-strategy must be the player with the winning strategy. \end{proof}

\begin{thm}  Assume all sets are Lebesgue measurable.
Then $$\text{RAT}(\text{E})^{*} \subseteq 
\text{RCBR}(\text{E}).$$   
\end{thm}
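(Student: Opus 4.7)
Given $s \in \text{RAT}(\text{E})^*_i$ with a winning strategy $\sigma$ for $\text{I}$ in $G^{\text{E}}_s$, the plan is to convert $\sigma$ into a type $t \in T_i$ witnessing $s \in \text{RCBR}(\text{E})_i$ by pushing forward, level by level, the beliefs $\sigma$ prescribes along the game tree. For each legal finite sequence $\vec{s} = (s_1,\ldots,s_n)$ of $\text{II}$-moves consistent with $\sigma$, write $\sigma(\vec{s}) = (\mu_n^{\vec{s}}, b_n^{\vec{s}})$. The rules force $(s_n,\mu_n^{\vec{s}}) \in \text{E}_{i_n}$, where $i_n$ is the current player at stage $n$ (alternating from $i$), and $\mu_n^{\vec{s}}(b_n^{\vec{s}}) = 1$. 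Moreover the sub-strategy of $\sigma$ rooted at any $\vec{s}$ is itself a winning strategy for $\text{I}$ in $G^{\text{E}}_{s_n}$, which is what drives the recursion below.

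Define measures $\delta^{\vec{s},k} \in \Delta(X_{i_n}^{k-1})$ by induction on $k$, simultaneously for all such $\vec{s}$. Set $\delta^{\vec{s},1} = \mu_n^{\vec{s}}$, and having defined all $\delta^{\cdot,j}$ for $j \le k$, let $\delta^{\vec{s},k+1}$ be the pushforward of $\mu_n^{\vec{s}}$ along the map
\[
s_{n+1} \mapsto (s_{n+1},\, \delta^{\vec{s}\frown(s_{n+1}),1},\, \ldots,\, \delta^{\vec{s}\frown(s_{n+1}),k}).
\]
This map is defined on the $\mu_n^{\vec{s}}$-full Borel set $b_n^{\vec{s}}$. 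Under the Lebesgue-measurability hypothesis, every subset of a Polish space is universally measurable (by the argument of 17.41 and 22.10 of \cite{kechris}), so every function between Polish spaces is measurable with respect to any Borel probability measure, and each pushforward is a bona fide Borel measure. Set $t := (\delta^{\emptyset,1},\delta^{\emptyset,2},\ldots)$.

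Coherence $\mbox{marg}_{X_{i_n}^{k-1}}(\delta^{\vec{s},k+1}) = \delta^{\vec{s},k}$ is immediate from the definition, and hereditary coherence follows by induction on $k$ because $\delta^{\vec{s},k+1}$ concentrates on tuples whose lower-level components are hereditarily coherent by inductive hypothesis. Lemma 4.5 then places each sequence $(\delta^{\vec{s},1},\delta^{\vec{s},2},\ldots)$ in $T_{i_n}$; in particular $t \in T_i$. The RCBR conditions are then established by a second simultaneous induction on $n$: for every reachable $\vec{s}$ of length $m$, $(s_m,\delta^{\vec{s},1},\ldots,\delta^{\vec{s},n}) \in \text{RCBR}(\text{E})_{i_m}^{*,n}$, with $s_0 := s$ and $i_0 := i$ when $\vec{s} = \emptyset$. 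The base case is the legal-move condition $(s_m,\mu_m^{\vec{s}}) \in \text{E}_{i_m}$; the inductive step uses $\mu_m^{\vec{s}}(b_m^{\vec{s}}) = 1$ together with the inductive claim applied at each one-step extension $\vec{s}\frown(s_{m+1})$ to conclude $\delta^{\vec{s},n+1}(\text{RCBR}(\text{E})_{i_{m+1}}^{*,n}) = 1$. Specializing to $\vec{s} = \emptyset$ and invoking Lemma 4.7 yields $s \in \text{RCBR}(\text{E})_i$.

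The main obstacle is measurability: without Lebesgue-measurability of all sets, the maps $s_{n+1} \mapsto \delta^{\vec{s}\frown(s_{n+1}),k}$ built by the recursion are only guaranteed to be defined pointwise, and the tower of pushforwards need not define honest Borel measures, let alone a coherent type. The hypothesis is precisely what is needed to close this gap, which is why the step from quasi-strategies to strategies passes through measurability in exactly the same way that Theorem 4.9 passed through uniformization.
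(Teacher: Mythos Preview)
Your proposal is correct and follows essentially the same approach as the paper: both convert a winning strategy into a type by recursively defining higher-order beliefs as pushforwards of the first-move measure along the map sending a next $\text{II}$-move to the belief hierarchy generated by the sub-strategy rooted there, using the measurability hypothesis to ensure these pushforwards are genuine Borel measures. Your indexing by finite $\text{II}$-move sequences $\vec{s}$ is slightly more explicit than the paper's sub-strategy notation $\tau_t$, but the construction, the coherence verification via Lemma~4.5, and the inductive check of the local $\text{RCBR}^{*,n}$ conditions are the same; note only that your final citation should be Lemma~4.8 rather than~4.7.
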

\begin{proof}  We show that under the measurability hypothesis, a winning strategy in $G_s^{\text{E}}$ gives rise to a type certifying $s \in \text{RCBR}(\text{E})_i$.  By induction we will define a $n^{th}$-order belief $\delta_{i,\tau}^n$ for every $i$, $s \in \text{RAT}(\text{E})_i^{*}$, and winning  strategy $\tau$ in $G_{s}^{E}$.  Let $(\nu,b)$ be the first play of such a $\tau$.  Thus, $\nu \in \Delta(X_j)$ and $(s,\nu) \in \text{E}_i$. Given $t \in b$ let $\nu(t)$ be the first play of the strategy $\tau_{t}$ obtained by following $\tau$ after the play $t$.  Note that $\tau_t$ is winning in $G_{t}^{E}$ by assumption.  We begin by setting $\delta^1_{i,\tau} = \nu$.  For $S \subseteq X_j \times \Delta(X_i)$ we define $\delta^2_{i,\tau}$ by $$\delta^2_{i,\tau}(S) = \delta^1_{i,\tau}( \{t \in b \ | \ (t,\delta_{j,\tau_{t}}^1)      \in S  \}  ) =\nu ( \{t \ | \ (t,\nu(t))           \in S  \}  ).$$  This is where measurability is used.  We define $\delta^3_{i,\tau}$ for $S \subseteq X_j \times \Delta(X_i) \times \Delta( X_i \times \Delta(X_j) )$ by $$\delta^3_{i,\tau}(S) = \delta^1_{i,\tau}(\{t \ | \ (t,\delta^1_{j,\tau_{t}},  \delta^2_{j,\tau_{t}}   )      \in S  \} ),$$
\noindent and in the general case 
$$\delta^n_{i,\tau}(S) = \delta^1_{i,\tau}(\{t \ | \ (t,\delta^1_{j,\tau_{t}},...,\delta^{n-1}_{j,\tau_{t}}   )      \in S  \} ) . $$

\noindent This defines a sequence of measures $t = (\delta^1_{i,\tau},..., \delta^n_{i,\tau},...)$ which can easily seen to satisfy common certainty of coherency (see Lemma 4.5 and Remark 4.7).  We need to show that $t$ witnesses $s \in \text{RCBR}(\text{E})_i$ (see Definition 3.3, Definition 4.6 and Lemma 4.8).  We have 
$$(s, \mbox{marg}_{X_j} \lambda_i(t) ) = (s,\delta^1_{i,\tau}) = (s, \nu)  \in \text{E}_i$$ 
as $\tau$ is a winning strategy.   We need to show by induction that 
$$\lambda_i(t)(\text{RCBR}(\text{E})_j^n) = 1$$
and by Lemma 4.8 it suffices to show that $$(s,\delta_{i,\tau}^1, ...,\delta_{i,\tau}^n) \in \text{RCBR}(\text{E})_i^{*,n}$$
\noindent for every $n \geq 2$.  By design and the fact that $\tau$ is winning, $\delta_{i,\tau}^2$ concentrates on the set $\{ (t,\nu(t)) \ | \ t \in b\}$ and hence on 
$\text{E}_j = \text{RCBR}(\text{E})_j^{*,1}$.  The induction step follows similarly.  \end{proof}

\begin{rem}  The above shows that measurable winning strategies give rise to types.  Thus Rationalizability is equivalent to the existence of a winning strategy, and RCBR follows from the existence of a measurable winning strategy. \end{rem}

\begin{cor}  Assume the Axiom of Real Determinacy $\mathsf{AD}_{\mathbb{R}}$ holds.  Then the following are equivalent for a strategy $s$ in a strategic form game with Polish strategy spaces and bounded payoff functions.
\begin{enumerate}
    \item The strategy $s$ is Rationalizable
    \item The strategy $s$ is consistent with RCBR
    \item Player $I$ has a winning strategy in the game $G^E_s$ where $E$ is the best response relation from the game.
\end{enumerate}
\end{cor}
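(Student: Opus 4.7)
The plan is to observe that Corollary 5.8 is essentially an assembly of results already proved in Sections 3 and 5, specialized to the case where $\text{E} = \text{E}^G$ is the best response relation from the given strategic form game. First I would set up the correspondence: if $\text{E}_i$ denotes the best response relation of player $i$ in $G$, then as noted in Corollary 3.7 we have $\text{RAT}(\text{E}) = \text{RAT}(G)$ and $\text{RCBR}(\text{E}) = \text{RCBR}(G)$, so it suffices to prove the equivalence of (1), (2), (3) for the abstract epistemic relation $\text{E}$.

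For the equivalence of (1) and (3), I would appeal to Proposition 5.2, which gives quasi-determinacy and shows that $s \in \text{RAT}(\text{E})_i$ is equivalent to player $\text{I}$ having a winning quasi-strategy in $G_s^{\text{E}}$, combined with Proposition 5.5, which upgrades quasi-strategies to strategies under $\mathsf{AD}_{\mathbb{R}}$; that is, $\text{RAT}(\text{E})^* = \text{RAT}(\text{E})$. Together these yield (1) $\Leftrightarrow$ (3).

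For (3) $\Rightarrow$ (2), I would invoke Theorem 5.6, whose hypothesis is that all sets are Lebesgue measurable. This hypothesis is delivered by $\mathsf{AD}_{\mathbb{R}}$ since $\mathsf{AD}_{\mathbb{R}}$ implies $\mathsf{AD}$ which implies all sets of reals are Lebesgue measurable (cited in the proof of Theorem 4.9 via 27.9 of \cite{kanamori} and 17.41, 22.10 of \cite{kechris}), giving $\text{RAT}(\text{E})^* \subseteq \text{RCBR}(\text{E})$. The reverse direction (2) $\Rightarrow$ (1) is immediate from Lemma 3.5, which requires no additional hypotheses and states $\text{RCBR}(\text{E}) \subseteq \text{RAT}(\text{E})$.

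Since each step is a direct citation of a previously established result, there is no real obstacle in this corollary; the substantive work lies in Propositions 5.2 and 5.5 (the quasi-determinacy analysis and its upgrading under $\mathsf{AD}_{\mathbb{R}}$) and in Theorem 5.6 (the construction of a type from a measurable winning strategy). Thus the only thing worth being careful about in writing the proof is to flag that $\mathsf{AD}_{\mathbb{R}}$ is used in two distinct ways, namely for determinacy of the real Gale-Stewart games $G_s^{\text{E}}$ and for universal measurability, both of which are needed for the chain (1) $\Leftrightarrow$ (3) $\Rightarrow$ (2) to close.
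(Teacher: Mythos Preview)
Your proposal is correct and follows essentially the same route as the paper's proof: Proposition 5.5 (which itself rests on Proposition 5.2) for (1) $\Leftrightarrow$ (3), and Theorem 5.6 together with the measurability consequence of $\mathsf{AD}_{\mathbb{R}}$ for (3) $\Rightarrow$ (2), with Lemma 3.5 closing the loop. The paper's proof is terser and also notes that (1) $\Leftrightarrow$ (2) can alternatively be obtained directly from Theorem 4.9, but your decomposition is the same in substance.
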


\begin{proof}  This is the Fundamental Theorem of Epistemic Game Theory under Determinacy with the added Gale-Stewart condition (3).  Proposition 5.5 establishes the equivalence between (3) and (1).  The equivalence with (2) then follows from Theorem 5.6 as the measurability hypothesis is a consequence of determinacy, and also follows from Theorem 4.9.  \end{proof}

\section{Measurable Uniformizations}

The Jankov-von Neumann uniformization theorem (see 18.1 and 29.9 of \cite{kechris}) is used in \cite{lubin} to establish a measure extension property in the analytic case (see Lemma 2.11) which is central to the argument in \cite{arieli}.  Generalizing, let $A \subseteq X \times Y$ be an arbitrary set and let $\mu \in \Delta(X)$ with $\mu( \mbox{proj}_{X}(A)) = 1$.  If there is $$f: \mbox{proj}_X(A) \rightarrow Y$$ which is $\mu$-measurable and uniformizes $A$ then we can define $\nu(S)$ by $$\nu(S) = \mu ( \{ x \ | \ (x,f(x)) \in S \})$$ and conclude that $\nu \in \Delta(X \times Y)$ with $\nu(A) = 1$ and $\mbox{marg}_{X}(\nu) = \mu$.  Thus, the following axiom $\mathsf{MEA}$ is a consequence of $\mathsf{AD}_{\mathbb{R}}$.

\begin{defn}  Let $\mathsf{MEA}$ denote the following Measure Extension Axiom.  Suppose $X,Y$ are Polish spaces and $A \subseteq X \times Y$.  Suppose $\mu \in \Delta(X)$ and $$\mu( \mbox{proj}_{X}(A)) = 1.$$  Then there is $\nu \in \Delta(X \times Y)$ with $\nu(A) = 1$ and $\mbox{marg}_{X}(\nu) = \mu$.
\end{defn}

\begin{prop}  $\mathsf{MEA}$ is equivalent to the following proposition.  Suppose $X,Y$ are Polish spaces and $A \subseteq X \times Y$.  Suppose $\mu \in \Delta(X)$ and $$\mu( \mbox{proj}_{X}(A)) = 1.$$  Then there is a Borel set $B$ with $\mu(B) = 1$ and a measurable uniformization $f$ of $A$ defined on $B$, that is $(x,f(x)) \in A$ for every $x \in B$. \end{prop}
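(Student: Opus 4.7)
The plan is to prove the two directions separately. The forward direction, that the uniformization statement implies $\mathsf{MEA}$, is a pushforward argument: given $A$ and $\mu$ with $\mu(\mbox{proj}_X(A)) = 1$, the uniformization statement yields a Borel $B \subseteq X$ of full $\mu$-measure and a $\mu$-measurable uniformization $f : B \to Y$.  I define $\nu \in \Delta(X \times Y)$ by pushing $\mu|_B$ forward along $x \mapsto (x, f(x))$, that is, for Borel $S \subseteq X \times Y$,
$$\nu(S) = \mu(\{x \in B \mid (x, f(x)) \in S\}).$$
The preimage is $\mu$-measurable because $f$ is, so $\nu$ is a well-defined Borel probability measure; its $X$-marginal is $\mu$ (since $\mu(B) = 1$), and it concentrates on the graph of $f|_B$, which is a $\nu$-measurable subset of $A$, so $\nu(A) = 1$ in the sense of Remark 2.5.

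For the nontrivial direction, I apply $\mathsf{MEA}$ to $A$ and $\mu$ to produce $\nu \in \Delta(X \times Y)$ with $\nu(A) = 1$ and $\mbox{marg}_X(\nu) = \mu$.  By Remark 2.5, $A$ is $\nu$-measurable with measure one, so by the definition of the $\nu$-completion (equivalently, by inner regularity of the Borel probability measure $\nu$ on the Polish space $X \times Y$) there is a Borel set $A^* \subseteq A$ with $\nu(A^*) = 1$.  Then
$$\mu(\mbox{proj}_X(A^*)) = \nu(\mbox{proj}_X(A^*) \times Y) \geq \nu(A^*) = 1,$$
so $\mbox{proj}_X(A^*)$ is an analytic set of full $\mu$-measure.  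Since $A^*$ is Borel, the Jankov--von Neumann Uniformization Theorem (18.1 of \cite{kechris}, as invoked in Lemma 2.11) yields a $\sigma(\bSigma^1_1)$-measurable, and hence universally measurable, selector $f^* : \mbox{proj}_X(A^*) \to Y$ with $(x, f^*(x)) \in A^* \subseteq A$ for every $x$ in its domain.  Being universally measurable, $f^*$ is in particular $\mu$-measurable, and inner regularity of $\mu$ on the analytic set $\mbox{proj}_X(A^*)$ produces a Borel set $B \subseteq \mbox{proj}_X(A^*)$ of full $\mu$-measure.  Setting $f = f^*|_B$ gives the desired Borel-domain $\mu$-measurable uniformization.

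The main step is the passage from the measure $\nu$ to a Borel subset $A^* \subseteq A$ of full $\nu$-measure; this is exactly where the strength of $\mathsf{MEA}$ is used, since there is no a priori reason to expect an arbitrary $A \subseteq X \times Y$ to contain a Borel subset whose projection has full $\mu$-measure.  Once $A^*$ is in hand, the rest of the argument is classical descriptive set theory provable in $\mathsf{ZF} + \mathsf{DC}$, and the equivalence thus isolates $\mathsf{MEA}$ as precisely the set-theoretic ingredient needed to upgrade Lubin's Lemma (Lemma 2.11) from the analytic case to arbitrary relations.
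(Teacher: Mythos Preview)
Your proof is correct and follows essentially the same approach as the paper: use $\mathsf{MEA}$ to obtain $\nu$, pass to a Borel $A^*\subseteq A$ of full $\nu$-measure, and apply Jankov--von Neumann to $A^*$; for the other direction push $\mu$ forward along $x\mapsto(x,f(x))$. Note only that you have labeled the directions opposite to the paper (which calls $\mathsf{MEA}\Rightarrow$ uniformization the ``forward'' direction), and that your extraction of a Borel $B\subseteq\mbox{proj}_X(A^*)$ is slightly cleaner than the paper's ``shrink $B$ so its projection is Borel.''
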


\begin{proof}  For the forward direction let $\nu \in \Delta(X \times Y)$ be the measure given by $\mathsf{MEA}$.  Let $B \subseteq A$ be Borel such that $\nu(B) = \nu(A)$.  By shrinking $B$ if necessary we can also assume $\mbox{proj}(B) = \bar{B}$ is Borel with $$\mu(\bar{B}) = \mu( \mbox{proj}_{X}(A)).$$  Thus there is a universally measurable uniformization $f$ of $B$ defined on $\bar{B}$ given by the Jankov-von Neumann Theorem.  For the other direction we can define $\nu$ from the uniformization $f$ by 
$$\nu(S) = \mu(  \{x \in B \ | \ (x,f(x)) \in S \})$$ for any Borel set $S \subseteq X \times Y$. \end{proof}

\begin{prop}  Assume $\mathsf{MEA}$.  Then all subsets of Polish spaces are universally measurable. \end{prop}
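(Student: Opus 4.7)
The plan is to reduce the problem to a single application of $\mathsf{MEA}$ through its uniformization reformulation in Proposition 6.2, applied to the graph of the indicator of an arbitrary set. Fix a Polish space $X$, an arbitrary subset $A \subseteq X$, and a Borel probability measure $\mu \in \Delta(X)$; the goal is to show $A$ is measurable with respect to the completion of $\mu$.

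The key construction is to work in the product with the two-point Polish space $\{0,1\}$ and consider the set
$$C = (A \times \{0\}) \cup ((X \setminus A) \times \{1\}) \subseteq X \times \{0,1\},$$
which is the graph of the indicator function of $X \setminus A$. The point of this construction is that $\mbox{proj}_X(C) = X$, so trivially $\mu(\mbox{proj}_X(C)) = 1$, and no measurability of $A$ needs to be assumed in order to invoke $\mathsf{MEA}$.

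Applying Proposition 6.2, I obtain a Borel set $B \subseteq X$ with $\mu(B) = 1$ together with a $\mu$-measurable uniformization $f : B \to \{0,1\}$ of $C$, meaning $(x, f(x)) \in C$ for every $x \in B$. By the definition of $C$, for $x \in B$ one has $f(x) = 0$ exactly when $x \in A$, so
$$A \cap B = f^{-1}(\{0\}).$$
Since $\{0\}$ is Borel in $\{0,1\}$ and $f$ is $\mu$-measurable (in the sense of the completion of $\mu$), the set $A \cap B$ is $\mu$-measurable. The complement $X \setminus B$ is a $\mu$-null Borel set, so its subset $A \setminus B$ is $\mu$-null in the completion. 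Hence $A = (A \cap B) \cup (A \setminus B)$ is $\mu$-measurable, as required. As $X$, $A$, and $\mu$ were arbitrary, every subset of every Polish space is universally measurable.

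There is no serious obstacle here; the only thing to be slightly careful about is invoking the correct formulation of $\mathsf{MEA}$. Using Proposition 6.2 rather than $\mathsf{MEA}$ itself avoids the intermediate step of converting the lifted measure $\nu$ back into a choice function, and keeps the argument essentially a one-liner once the target set $C$ is written down.
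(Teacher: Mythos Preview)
Your proof is correct and follows essentially the same approach as the paper: the paper also applies Proposition~6.2 to the graph of the characteristic function of the given set (using $[0,1]$ rather than $\{0,1\}$ as the target space) and concludes $\mu$-measurability from the measurability of the preimages $f^{-1}(0)$ and $f^{-1}(1)$ together with $\mu(X\setminus B)=0$. The only cosmetic difference is that the paper finishes with a sandwich argument $B\cap S \subseteq S \subseteq X\setminus((X\setminus S)\cap B)$ rather than your decomposition $A=(A\cap B)\cup(A\setminus B)$, but these are equivalent.
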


\begin{proof}  Assume $\mathsf{MEA}$ and let $S \subseteq X$ be a subset of a Polish space and let $\mu \in \Delta(X)$.  Let $A \subset X \times [0,1]$ be the graph of the characteristic function of $S$.  By Proposition 6.2 there is a Borel set $B \subseteq X$ with $\mu(B) = 1$ and a measurable uniformization $f$ of $A$ defined on $B$.  The sets $f^{-1}(1) = S \cap B$ and $f^{-1}(0) = (X \setminus S) \cap B$ are $\mu$-measurable.  We have 
$$B \cap S \subseteq S \subseteq X \setminus ( (X \setminus S) \cap B)$$ and $\mu(X \setminus B) = 0$ so $S$ is $\mu$-measurable.  \end{proof}

\begin{defn}  The Measurable Uniformization Principle $\mathsf{MUP}$ from the Solovay model (see \cite{solovay} and also \cite{rais}) asserts that for any family $$\{ A_x  \ | \ x \in B\}$$ of non-empty subsets of $\mathbb{R}$ indexed by a set $B \subseteq \mathbb{R}$ with $\mu(B) > 0$, where $\mu$ is Lebesgue measure, there is a Borel function $f$ such  that $$\mu(\{ x\in B \ | \ f(x) \notin A_x\})=0.$$
\end{defn}

\begin{prop}  $\mathsf{MEA}$ is equivalent to $\mathsf{MUP}$.
\end{prop}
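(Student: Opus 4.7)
The plan is to prove both implications, using Proposition 6.2 to pass freely between the measure-extension formulation of $\mathsf{MEA}$ and its equivalent a.e.\ measurable uniformization formulation.

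For the direction $\mathsf{MEA} \Rightarrow \mathsf{MUP}$: given a family $\{A_x \mid x \in B\}$ of nonempty subsets of $\mathbb{R}$ indexed by a set $B \subseteq \mathbb{R}$ of positive Lebesgue measure, I would form $A = \{(x,y) \mid x \in B,\ y \in A_x\} \subseteq \mathbb{R} \times \mathbb{R}$ and let $\mu \in \Delta(\mathbb{R})$ be the normalization of Lebesgue measure restricted to $B$, so that $\mu(\mathrm{proj}_{\mathbb{R}}(A)) = 1$. Proposition 6.2, available under $\mathsf{MEA}$, yields a Borel set $B'$ with $\mu(B') = 1$ together with a universally measurable uniformization $f$ of $A$ on $B'$. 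Since $f$ is $\mu$-measurable and $\mu$ is a Borel probability measure on $\mathbb{R}$, a standard approximation produces a Borel function $g:\mathbb{R} \to \mathbb{R}$ agreeing with $f$ off a Borel $\mu$-null set, and this $g$ witnesses $\mathsf{MUP}$ for the given family.

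For the direction $\mathsf{MUP} \Rightarrow \mathsf{MEA}$: given Polish $X, Y$, a set $A \subseteq X \times Y$, and $\mu \in \Delta(X)$ with $\mu(\mathrm{proj}_X(A)) = 1$, I would transfer the problem to Lebesgue measure on $[0,1]$. Fix a Borel isomorphism $\psi:Y \to Y^{*}$ onto a Borel subset of $\mathbb{R}$, and decompose $\mu = \mu_a + \mu_c$, where $\mu_a$ is concentrated on the at-most-countable set $C$ of atoms of $\mu$ and $\mu_c$ is atomless. Since $\mu(\mathrm{proj}_X(A)) = 1$, every atom of $\mu$ lies in $\mathrm{proj}_X(A)$, so $\mathsf{DC}$ permits choosing, for each $a \in C$, a point $y_a \in A_a$. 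If $c := \mu_c(X) > 0$, the measure-isomorphism theorem (17.41 of \cite{kechris}) provides a Borel isomorphism $\phi:X \setminus C \to [0,1]$ with $\phi_{*}\mu_c = c \cdot \lambda$. Set $A^{*} = (\phi \times \psi)(A \cap ((X \setminus C) \times Y)) \subseteq [0,1] \times \mathbb{R}$ and apply $\mathsf{MUP}$ to the family $\{A^{*}_x\}_{x \in \phi(\mathrm{proj}_X(A) \setminus C)}$ of nonempty subsets of $\mathbb{R}$, obtaining a Borel $f:\mathbb{R} \to \mathbb{R}$ that uniformizes $A^{*}$ off a Lebesgue null set. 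Pulling back via $\phi$ and $\psi^{-1}$ yields a Borel function $g: X \setminus C \to Y$ with $(x, g(x)) \in A$ for $\mu_c$-a.e.\ $x$, and then the measure
$$\nu(S) = \sum_{a \in C} \mu(\{a\})\,\mathbf{1}_S(a, y_a) + \int_{X \setminus C} \mathbf{1}_S(x, g(x)) \, d\mu(x)$$
satisfies $\nu \in \Delta(X \times Y)$, $\mathrm{marg}_X(\nu) = \mu$, and $\nu(A) = 1$.

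The main obstacle is careful bookkeeping of null sets in the backward direction, since $A$ and $\mathrm{proj}_X(A)$ need not be Borel: one has to verify that the Lebesgue null exceptional set coming from $\mathsf{MUP}$ pulls back under $\phi^{-1}$ to a $\mu_c$-null set contained in a Borel null set, so that $\nu(A) = 1$ holds in the completion sense of Remark 2.5. The atomic part and the countable choice of witnesses $y_a$ require only $\mathsf{DC}$, consistent with the $\mathsf{ZF} + \mathsf{DC}$ framework used throughout the paper.
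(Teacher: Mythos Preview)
Your proposal is correct and follows essentially the same route as the paper's proof: for $\mathsf{MEA}\Rightarrow\mathsf{MUP}$ you normalize Lebesgue measure on $B$ and invoke Proposition~6.2 followed by a Lusin-type approximation to a Borel function, and for $\mathsf{MUP}\Rightarrow\mathsf{MEA}$ you split off the atoms of $\mu$, transfer the atomless part to Lebesgue measure via the isomorphism theorem (17.41 of \cite{kechris}) together with a Borel injection of $Y$ into $\mathbb{R}$, apply $\mathsf{MUP}$, and pull back. The only cosmetic difference is that the paper packages the reverse direction as producing a uniformizing function and then cites Proposition~6.2, whereas you write down the resulting measure $\nu$ explicitly; the content is the same.
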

\begin{proof}  For the forward direction, given $B$ as in the statement of $\mathsf{MUP}$, we can define the measure 
$$\lambda(S) = \frac{\mu(S \cap B)}{\mu(B)}$$ so that $\lambda(B) = 1$ and then apply Proposition 6.2.  Lusin's Theorem can then be used to extract a Borel function.

For the reverse direction, we are given $A \subseteq X \times Y$ and $\mu \in \Delta(X)$ with $$\mu( \mbox{proj}_{X}(A) ) = 1.$$  By Proposition 6.2 we need to produce a Borel set $B \subseteq X$ with $\mu(B) = 1$ and 
a measurable uniformization of $A$ defined on $B$.  We can assume that $X$ is uncountable.  Let $X_a$ be the set of atoms of $\mu$ and $X_c = X \setminus X_a$ so that $\mu$ is continuous on $X_c$.  The set $X_a$ is countable, hence Borel and we can uniformize on 
$\mbox{proj}_X(A) \cap X_a$, by simply picking elements.
Let $u_a:X_a \rightarrow Y$ be such a map with $(x,u_a(x)) \in A$ for $x \in X_a$.
So we may assume that $$\mu(\mbox{proj}_X(A) \cap X_c) > 0.$$ Viewing $X_c$ as a standard Borel space, there is a Borel isomorphism $\phi$ with the interval $[0, \mu(X_c)]$ equipped with Lebesgue measure (see 17.41 of \cite{kechris}).  We may also find a Borel injection $\psi$ of $Y$ into $[0,1]$.  We may then apply $\mathsf{MUP}$ to the set $$\hat{A} = \{ (\phi(x),\psi(y) \ | \ x \in X_c \wedge (x,y) \in A\} \subseteq [0, \mu(X_c)] \times [0,1]$$
\noindent to get an almost Borel funcion $\hat{u}$ which is almost everywhere a uniformization of $\hat{A}$.  Thus the map $$u = \psi^{-1} \circ \hat{u} \circ \phi$$ is the desired uniformization on $X_c$.  We can find a Borel set $B \subseteq X_c$ such that $\mu(B) = \mu(X_c)$ on which $u$ is defined.  Finally, we have $u_a \cup u$ is the desired uniformizing map on the Borel set $X_a \cup B$.  \end{proof}

\begin{thm} The Measure Extension Axiom $\mathsf{MEA}$ is equivalent to statement that $$\text{RAT}(\text{E}) = \text{RCBR}(\text{E})$$ for any Polish spaces $X_i$ and relations $\text{E}_i \subseteq X_i \times \Delta(X_j).$
\end{thm}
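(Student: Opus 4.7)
The theorem asserts the equivalence of MEA with the universal validity of $\text{RAT}(\text{E}) = \text{RCBR}(\text{E})$ over relations on Polish strategy spaces, and the plan is to handle each implication separately.

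For the forward direction ($\mathsf{MEA} \Rightarrow \text{RAT} = \text{RCBR}$), the strategy is to iterate MEA up the belief hierarchy to lift a single first-order witness to a full type. Given $s \in \text{RAT}(\text{E})_i$, fix some $\mu^0 \in \Delta(X_j)$ with $(s,\mu^0) \in \text{E}_i$ and $\mu^0(\text{RAT}(\text{E})_j) = 1$, set $\delta_i^1 = \mu^0$, and for each player $k$ and $n \geq 1$ let $A_k^{[n]}$ be the set of hereditarily coherent partial types $(s_k,\delta_k^1,\ldots,\delta_k^n)$ with $\delta_k^1$ concentrating on $\text{RAT}(\text{E})_{k'}$ and $\delta_k^r(A_{k'}^{[r-1]}) = 1$ for $2 \leq r \leq n$. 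The core combinatorial claim, proved by simultaneous induction on $n$ for both players, is that $\text{proj}(A_k^{[n+1]}) = A_k^{[n]}$: given a tuple in $A_k^{[n]}$, its top-level belief $\delta_k^n$ concentrates on $A_{k'}^{[n-1]} = \text{proj}(A_{k'}^{[n]})$ by hypothesis, and MEA extends it to a belief concentrating on $A_{k'}^{[n]}$ with the correct marginal. With these projection identities in hand, iterate MEA again to lift each $\delta_i^n$ to $\delta_i^{n+1} \in \Delta(X_i^n)$ concentrating on $A_j^{[n]}$. The resulting coherent sequence $\delta_i = (\delta_i^1,\delta_i^2,\ldots)$ lies in $T_i$ by Lemma 4.5 and witnesses $s \in \text{RCBR}(\text{E})_i$ by Lemma 4.8.

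For the reverse direction ($\text{RAT} = \text{RCBR} \Rightarrow \mathsf{MEA}$), encode a given instance $(X,Y,A,\mu)$ of MEA as an epistemic structure. Take $X_1 = X$, $X_2 = Y$, $\text{E}_1 = \{(x,\eta) : \eta \text{ concentrates on } A_x\}$ where $A_x = \{y : (x,y) \in A\}$, and $\text{E}_2 = \{(y,\mu) : y \in Y\}$. A direct computation shows $\text{RAT}(\text{E})_1 = \mbox{proj}_X(A) =: P$ (using Dirac witnesses on points of $A_x$) and $\text{RAT}(\text{E})_2 = Y$ (since $\mu(P) = 1$). As $P \neq \emptyset$ forces $Y \neq \emptyset$, fix some $y_0 \in Y$; by hypothesis $y_0 \in \text{RCBR}(\text{E})_2$, yielding a type $t \in T_2$ with $\lambda_2(t)$ having marginal $\mu$ on $X$ and concentrating on $\text{RCBR}(\text{E})_1 \subseteq X \times T_1$. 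Define $\bar{\nu}$ on Borel sets $S \subseteq X \times Y$ by
\[
\bar{\nu}(S) \;=\; \int_{X \times T_1} \mbox{marg}_Y(\lambda_1(t'))(S^x) \, d\lambda_2(t)(x,t'),
\]
where $S^x = \{y : (x,y) \in S\}$. Then $\mbox{marg}_X(\bar{\nu}) = \mu$, and for every Borel $S$ disjoint from $A$ we have $S^x \subseteq Y \setminus A_x$, so by the defining property of $\text{E}_1$ the integrand vanishes $\lambda_2(t)$-almost everywhere and $\bar{\nu}(S) = 0$. Hence $\bar{\nu}$ extends to a probability measure assigning $A$ measure one, confirming MEA.

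The main obstacle is the forward direction's simultaneous induction, which requires careful bookkeeping of hereditary coherence and the alternating player indices so that MEA's projection hypothesis is available at every extension step; the universal measurability of each $A_k^{[n]}$, needed to make the concentration statement meaningful, follows from Proposition 6.3 under MEA. In the reverse direction, the only subtlety is that $A$ need not be Borel, so ``$\nu(A) = 1$'' is meant in the completion sense; this is exactly what the ``every Borel $S$ disjoint from $A$ is $\bar{\nu}$-null'' formulation delivers.
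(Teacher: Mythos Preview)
Your proof is correct.  The forward direction is essentially the paper's: both define nested families of partial belief hierarchies (your $A_k^{[n]}$, the paper's $A_i^n$) and use $\mathsf{MEA}$ inductively to establish the projection identities $\mbox{proj}(A^{[n+1]}) = A^{[n]}$ that produce a full witnessing type via Lemmas~4.5 and~4.8.

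The reverse direction differs.  The paper takes $X_1 = X_2 = X$ (reducing first to the uncountable case), transports $A$ into $X \times \Delta(X)$ via a Borel isomorphism $h:Y \to \Delta(X)$, and sets $E_j = X \times \{\mu\}$; the desired extension is then read off directly as the second-order belief $\delta_j^2$ of a witnessing type, with no further computation.  You instead keep $X_1 = X$, $X_2 = Y$, let $E_1$ be the fibrewise concentration relation, and build $\bar\nu$ by integrating the first-order beliefs $\mbox{marg}_Y(\lambda_1(t'))$ against $\lambda_2(t)$.  The paper's route is shorter once the isomorphism is in place and avoids any disintegration argument, but needs the uncountability reduction; your route is uniform in $X,Y$ and makes the source of the mass on $A$ more explicit, at the cost of verifying that the integrand $(x,t') \mapsto \mbox{marg}_Y(\lambda_1(t'))(S^x)$ is measurable (which follows from the Borel map $(x,\eta) \mapsto \delta_x \times \eta$ of 17.25 in Kechris) and working in the completion to conclude $\bar\nu(A)=1$.
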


\begin{proof}  Let $A \subseteq X \times Y$ as in the statement of $\mathsf{MEA}$ with $\mu \in \Delta(X)$ such that $\mu(\mbox{prox}_X(A)) = 1$.  We want to produce a measure $\nu$ as in the statement of $\mathsf{MEA}$.  We can assume that $X,Y$ are uncountable Polish spaces.  Let $h:Y \rightarrow \Delta(X)$ be a Borel isomorphism and let $$\hat{A} = \{ (x,h(y)) \ | \ (x,y) \in A\} \subseteq X \times \Delta(X).$$  Note that $\mbox{proj}_X(A) = \mbox{proj}_{X}(\hat{A}).$  Let $$E_i = \hat{A} \cup ((X \setminus \mbox{proj}_{X}(\hat{A})) \times \Delta(X)).$$  

\noindent Note that if there is $\hat{\nu} \in \Delta(X \times \Delta(X))$ with $\hat{\nu}(E_i) = 1$ and $\mbox{marg}_X(\hat{\nu}) = \mu$ then $\hat{\nu}(\hat{A}) = 1$ and hence $\nu(A) = 1$ and $\mbox{marg}_{X}(\nu) = \mu$ where 
$$\nu(S) = \hat{\nu}(  \{ (x,h^{-1}(y)) \ | \ (x,y) \in S \} )$$ for any Borel $S \subset X \times Y$.  We regard $X = X_i = X_j$ as the strategy space for players $i$ and $j$, and have already defined $E_i \subseteq X_i \times \Delta(X_j)$ above.   We define $E_j \subseteq X_j \times \Delta(X_i)$ by $$E_j = X_j \times \{ \mu \}.$$ Thus, $$\text{RAT}(\text{E})_i = \text{RAT}(\text{E})_j = X.$$ By assumption, a given strategy in $X_j$ must belong to $\text{RCBR}(\text{E})_j$ as witnessed by a type $\delta_j$.  We must have $\delta_j^1 = \mu$, $\delta_j^2(\text{E}_i) = 1$, and  $$\mbox{marg}_{X_i}(\delta_j^2) = \mu.$$  Hence $\hat{\nu} = \delta_j^2$ is the desired measure.
 
For the other direction, we assume $\mathsf{MEA}$ holds and let $X_i$ be Polish and $$\text{E}_i \subseteq X_i \times \Delta(X_j)$$ be relations between strategies and beliefs.   For each i, let 
$$A^1_i = \{ (s,\mu) \in \text{E}_i \ | \ s \in \text{RAT}(\text{E})_i \mbox{ and } \mu(\text{RAT}(\text{E})_j) = 1\}$$
For $n>1$ and each $i$ define 
$$A^{n}_i = \{(s,\mu_i^1,...,\mu_i^{n}) \in  \text{RCBR}(\text{E})_i^{*,n} \ | \ $$$$(s,\mu_i^1,...,\mu_i^{n-1}) \in A^{n-1}_i \mbox{ and } \mu_i^{n}(A_j^{n-1}) = 1\}$$

\noindent We will show that every element of $A^{n}_i$ has an extension in $A^{n+1}_i$ by induction on $n$ with the convention that $A_i^{0} = \text{RAT}(\text{E})_i$.
Thus our induction hypothesis for $n \geq 0$ is 
$$A_{i}^{n} = \mbox{proj}(A_{i}^{n+1})$$
\noindent for each $i$.  Note that $A_{i}^{n} \subseteq \text{RCBR}(\text{E})_i^{*,n}$ by definition so this will prove that $\text{RAT}(\text{E})_i \subseteq \text{RCBR}(\text{E})_i$ and hence $\text{RAT}(\text{E})_i = \text{RCBR}(\text{E})_i$ for each $i$ as desired.  In fact, we need only require that $(s,\mu_i^1,...,\mu_i^{n})$ be coherent in the definition of $A_i^n$, and we would get $(s,\mu_i^1,...,\mu_i^{n}) \in \text{RCBR}(\text{E})_i^{*,n}$ from the rest of the definition. 

The case $n=0$ follows by definition of $\text{RAT}(\text{E})$.  Now let $\mu_i^1 \in \Delta(X_j)$ be such that $(s,\mu_i^1) \in A^1_i$.  Applying $\mathsf{MEA}$, let $\mu_i^2 \in \Delta(X_j \times \Delta(X_i))$ be such that $\mu_i^2(A^1_j) = 1$ and $$\mbox{marg}_{X_j}(\mu_i^2) = \mu^1_i.$$ Thus $(s,\mu_i^1,\mu_i^2) \in A_i^{2}$ as desired.  
For the case $n = 2$ let $(s,\mu_i^1,\mu_i^2) \in A_i^{2}$.  The key point is that $$A_j^1 = \mbox{proj}_{X_j \times \Delta(X_i)} A_j^2$$ by the induction hypothesis, so $\mu_i^2$ lifts to $\mu_i^3$ with $\mu_i^3(A_j^2) = 1$ and $$\mbox{marg}_{X_j \times \Delta(X_i)}(\mu_i^3) = \mu_i^2.$$  
Since $\mu_i^3$ concentrates on hereditarily coherent sequences (see Remark 4.7) we conclude that $$(s,\mu_i^1,\mu_i^2,\mu_i^3) \in A_i^3.$$  For the general case, let 
$$(s,\mu_i^1,\mu_i^2,...,\mu_n^i) \in A_i^n$$
We have $ A_j^{n-1}  =  \mbox{proj}_{X_i^{n-1}}(A_j^n)$ so $\mu_i^n$ lifts to $\mu_i^{n+1}$ with $\mu_i^{n+1}(A_j^{n}) = 1$ and $\mbox{marg}(\mu_i^{n+1}) = \mu_i^{n}$.  Again, $\mu_i^{n+1}$ concentrates on hereditarily coherent measures so we conclude that 
$$(s,\mu_i^1,\mu_i^2,...,\mu_n^i,\mu_i^{n+1}) \in A_i^{n+1}$$ by the observation of Remark 4.7  \end{proof}

\begin{thm}  The following are equivalent.
\begin{enumerate}
    \item The Measure Extension Axiom $\mathsf{MEA}$
    \item All sets Lebesgue Measurable plus  
      $$\text{RAT}(\text{E})^{*} = \text{RAT}(\text{E})$$ for any Polish spaces $X_i$ and relations $E_i \subseteq X_i \times \Delta(X_j)$
     \item $\text{RAT}(\text{E})^{*} = \text{RCBR}(\text{E}) = \text{RAT}(\text{E})$ for any Polish spaces $X_i$ and relations $E_i \subseteq X_i \times \Delta(X_j)$.
\end{enumerate}
\end{thm}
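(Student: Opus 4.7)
The plan is to close the cycle $(1) \Rightarrow (2) \Rightarrow (3) \Rightarrow (1)$, using Theorem 6.6 and Theorem 5.6 to handle two of the arrows almost immediately and concentrating the real work on $(1) \Rightarrow (2)$. For $(3) \Rightarrow (1)$ the clause $\text{RAT}(\text{E}) = \text{RCBR}(\text{E})$ for all $\text{E}$ is exactly what Theorem 6.6 characterizes as $\mathsf{MEA}$. For $(2) \Rightarrow (3)$, Theorem 5.6 gives $\text{RAT}(\text{E})^{*} \subseteq \text{RCBR}(\text{E})$ from the measurability hypothesis, Lemma 3.5 always gives $\text{RCBR}(\text{E}) \subseteq \text{RAT}(\text{E})$, and the added assumption $\text{RAT}(\text{E})^{*} = \text{RAT}(\text{E})$ forces the chain $\text{RAT}(\text{E}) = \text{RAT}(\text{E})^{*} \subseteq \text{RCBR}(\text{E}) \subseteq \text{RAT}(\text{E})$ to collapse to equality throughout.

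The substance lies in $(1) \Rightarrow (2)$. Proposition 6.3 yields universal measurability of all subsets of Polish spaces, and Theorem 6.6 gives $\text{RAT}(\text{E}) = \text{RCBR}(\text{E})$; since $\text{RAT}(\text{E})^{*} \subseteq \text{RAT}(\text{E})$ by Definition 5.4, what remains is $\text{RAT}(\text{E}) \subseteq \text{RAT}(\text{E})^{*}$. Given $s \in \text{RAT}(\text{E})_i = \text{RCBR}(\text{E})_i$, let $t$ be a witnessing type and build a strategy $\tau$ for player I in $G^{\text{E}}_s$ by propagating a current witnessing type along any play. At a position with current type $t_n \in T_{i_n}$ for the active player $i_n$, set $\mu_n = \mbox{marg}_{X_{j_n}} \lambda_{i_n}(t_n)$, note $(s_n, \mu_n) \in \text{E}_{i_n}$, and observe that $U_{j_n} = \bigcap_{k>0} \text{RCBR}(\text{E})^k_{j_n} \subseteq X_{j_n} \times T_{j_n}$ is universally measurable with $\lambda_{i_n}(t_n)$-measure one, whence $\mu_n(\mbox{proj}_{X_{j_n}}(U_{j_n})) = 1$. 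Proposition 6.2, equivalent to $\mathsf{MEA}$, then supplies a Borel $b_n \subseteq X_{j_n}$ with $\mu_n(b_n) = 1$ together with a $\mu_n$-measurable uniformization $u_n \colon b_n \to T_{j_n}$ of $U_{j_n}$. Player I plays $(\mu_n, b_n)$; when II responds with $s_{n+1} \in b_n$ we set $t_{n+1} = u_n(s_{n+1})$ and iterate. Canonicalizing the choice of $(b_n, u_n)$ via a fixed $\mathsf{ZF}$-definable enumeration of Borel codes makes $\tau$ a genuine function from histories to moves, and by construction $\tau$ never breaks the rules, witnessing $s \in \text{RAT}(\text{E})^{*}_i$.

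The main obstacle is this last construction: extracting a deterministic strategy rather than a quasi-strategy without a continuum of choices over the game tree. The force of $\mathsf{MEA}$ through Proposition 6.2 is precisely that it converts "existence of continuations" (which is all the witnessing type guarantees pointwise) into a $\mu_n$-measurable selection depending only on the history, which assembles into a bona fide strategy in $\mathsf{ZF}+\mathsf{DC}$. One must also check that each new $t_{n+1} \in T_{j_n}$ indeed witnesses RCBR for $s_{n+1}$ to all orders so the recursion can continue — but this is exactly the hereditary common-certainty-of-coherency content encoded by membership in $U_{j_n}$.
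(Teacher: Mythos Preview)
Your cycle $(1)\Rightarrow(2)\Rightarrow(3)\Rightarrow(1)$ is correct, and the arrows $(2)\Rightarrow(3)$ and $(3)\Rightarrow(1)$ are handled exactly as in the paper, via Theorem 5.6, Lemma 3.5, and Theorem 6.6. The difference lies in $(1)\Rightarrow(2)$, specifically in how you obtain $\text{RAT}(\text{E})\subseteq\text{RAT}(\text{E})^{*}$. The paper goes directly from Proposition 5.2: given $s\in\text{RAT}(\text{E})_i$ one already has a winning \emph{quasi}-strategy for I in $G^{\text{E}}_s$, and Proposition 6.2 is applied level-by-level to the quasi-strategy's response sets (with the measure at each node supplied by the previous move) to extract a genuine strategy. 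You instead invoke Theorem 6.6 first to pass to an RCBR-witnessing type $t$, and then read a strategy off the type by uniformizing $U_{j_n}=\bigcap_k \text{RCBR}(\text{E})^k_{j_n}$ against the marginal of the current belief. Both routes are valid and use Proposition 6.2 in the same essential way; the paper's is more self-contained (it does not need the full force of Theorem 6.6 for this implication), while yours makes explicit the point that a type encodes a winning strategy for I in $G^{\text{E}}_s$.

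One caveat: your remark that ``canonicalizing the choice of $(b_n,u_n)$ via a fixed $\mathsf{ZF}$-definable enumeration of Borel codes'' makes $\tau$ a function does not quite work as stated---there is no $\mathsf{ZF}$-definable well-ordering of Borel codes, and Proposition 6.2 only asserts existence, not a canonical witness. The correct fix (implicit in the paper's ``continuing in this fashion by induction'') is to apply Proposition 6.2 once per level $n$, uniformizing over the product of all histories of length $n$ against the iterated measure built from the previously selected $\mu_k$'s, shrinking earlier $b_k$'s to Borel full-measure subsets as needed; this requires only $\mathsf{DC}$. The paper is no more explicit than you are on this point, so the informality is comparable, but the specific patch you offer should be replaced by the level-by-level argument.
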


\begin{proof}  By Theorem 6.6 and Theorem 5.6 we have (2) implies (1).  By Proposition 6.3, Theorem 6.6 and Theorem 5.6 we see that (2) and (3) are equivalent.  So given Proposition 5.2, to finish the proof the theorem it suffice to turn a winning quasi-strategy in a game $G^{E}_s$ into a winning strategy under $\mathsf{MEA}$.  This will follow from Proposition 6.2.  

Suppose $\tau$ is a winning quasi-strategy for player $I$ in $G^{E}_s$ for some $s \in X_i$.  Let $(\mu_0,b_0)$ be a first play for I.  By Proposition 6.2 we may uniformize the response sets to $s_1 \in b_0$ to get $(\mu_1(s_1),b_1(s_1))$ as a function of $s_1$, and defined for all $s_1$ in some Borel $b^{*}_0 \subseteq b_0$ with $\mu_0(b^{*}_0) = 1$.  Now, let $m_1(s_1)$ and $b(s_1)$ be such a play.  We similarly apply Proposition 6.2 to uniformize the responses $\mu_2(s_2)$ for $s_2 \in b_1(s_1)$, shrinking $b_1(s_1)$ if necessary.   Continuing in this fashion by induction produces the desired strategy witnessing $s \in \text{RAT}(\text{E})^{*}_{i}$.    \end{proof}

\begin{cor}  Assume the Measure Extension Axiom $\mathsf{MEA}$.  Then Rationalizability is equivalent to Rationality and Common Belief in Rationality for any strategic form game with bounded payoffs played on a Polish strategy spaces.   Both properties are characterized by player I having a winning strategy in the auxiliary Gale-Stewart game.  \end{cor}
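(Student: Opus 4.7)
The plan is to obtain this as an immediate specialization of Theorem 6.7 to the best response setting, exactly as Corollary 3.7 was extracted from Theorem 3.6. Given a strategic form game $G = (\pi_1,\pi_2)$ with bounded payoffs on Polish strategy spaces $X_1,X_2$, I define the best response relations
$$\text{E}^G_1 = \{(s,\mu) \in X_1 \times \Delta(X_2) \ | \ \forall t \in X_1 \ \pi_1(s,\mu) \geq \pi_1(t,\mu)\},$$
and symmetrically for $\text{E}^G_2$. Setting $\text{E} = (\text{E}^G_1,\text{E}^G_2)$, unwinding Definitions 2.4, 2.10, 3.1, and 3.3 one checks that $\text{RAT}(G) = \text{RAT}(\text{E})$ and $\text{RCBR}(G) = \text{RCBR}(\text{E})$; this is verbatim the reduction already performed in the proof of Corollary 3.7.

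Next, I invoke Theorem 6.7: under $\mathsf{MEA}$, for any pair of relations $\text{E}_i \subseteq X_i \times \Delta(X_j)$ on Polish spaces we have
$$\text{RAT}(\text{E})^{*} = \text{RCBR}(\text{E}) = \text{RAT}(\text{E}).$$
Applying this to $\text{E} = \text{E}^G$ yields $\text{RAT}(G) = \text{RCBR}(G)$, proving the first assertion. For the second assertion, the set $\text{RAT}(\text{E})^{*}_i$ is, by Definition 5.4, exactly the set of $s \in X_i$ for which player I has a winning strategy in the auxiliary Gale-Stewart game $G^{\text{E}}_s$. Translating back through the identification $\text{E} = \text{E}^G$, this gives the Gale-Stewart characterization of both properties.

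Since the work has already been absorbed into Theorem 6.7, there is no real obstacle here, merely a bookkeeping step: one has to verify that the reduction $G \mapsto \text{E}^G$ preserves the three sets $\text{RAT}$, $\text{RCBR}$, and $\text{RAT}^{*}$. The first two are immediate from Definitions 2.10 and 3.3 (and were essentially noted in Corollary 3.7), while the third is immediate from the definition of $G^{\text{E}}_s$ in Definition 5.1, since the rules of $G^{\text{E}^G}_s$ literally spell out the best response condition for the game $G$. With these identifications in place, the corollary is a direct consequence of Theorem 6.7.
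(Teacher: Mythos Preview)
Your proposal is correct and matches the paper's approach: the corollary is stated there without proof, being treated as an immediate specialization of Theorem 6.7 to the best response relations $\text{E}^G$ of the game, exactly as you have written it out.
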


\begin{cor}  Suppose $G$ is a game played on Polish spaces with bounded payoffs, $G \in L(\mathbb{R})$ and $\mathsf{AD}$ holds in $L(\mathbb{R})$.  Then $$\text{RAT}(G) = \text{RCBR}(G).$$ 
\end{cor}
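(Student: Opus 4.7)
The plan is to prove $\text{RAT}(G) = \text{RCBR}(G)$ inside $L(\mathbb{R})$ and then transfer the equality back to $V$ by absoluteness. Since $G \in L(\mathbb{R})$, the Polish strategy spaces $X_i$, the payoffs $\pi_i$, and the best-response relations $\text{E}^G_i$ all lie in $L(\mathbb{R})$. A Borel probability measure on a Polish space is determined by its values on a countable basis, so $\Delta(X)$ and its elements are absolute between $L(\mathbb{R})$ and $V$. The universal type spaces $T_i$ of Definition 4.1 are built using only Kolmogorov's Extension Theorem, which relies on $\mathsf{DC}$ (Remark 4.3), so the homeomorphisms $\lambda_i$ and the iterates $\text{RAT}(G)^\alpha_i$ and $\text{RCBR}(G)^n_i$ are computed identically in $L(\mathbb{R})$ and in $V$. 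Hence it suffices to prove the equality inside $L(\mathbb{R})$.

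Inside $L(\mathbb{R})$, I would establish $\mathsf{MEA}$ and then invoke Corollary 6.9. Under $\mathsf{AD}^{L(\mathbb{R})}$, every subset of a Polish space is Lebesgue measurable, and by Proposition 6.2 it suffices to produce, for every $A \subseteq X \times Y$ with $\mu(\mbox{proj}_X A) = 1$, a measurable uniformization defined on a $\mu$-conull Borel subset of $X$. Under $\mathsf{AD}$ in $L(\mathbb{R})$ every set of reals is definable in $L(\mathbb{R})$ from a real, and by the Moschovakis Third Periodicity Theorem $\bSigma^2_1$ enjoys the scale property; together with the Coding Lemma and the universal measurability of all sets of reals, this yields the desired almost-everywhere uniformization. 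Equivalently, one may verify $\mathsf{MUP}$ directly inside $L(\mathbb{R})$ by adapting Solovay's argument to the present setting, exploiting that under $\mathsf{AD}^{L(\mathbb{R})}$ every set of reals is definable from a real, and then appealing to Proposition 6.5.

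Once $\mathsf{MEA}$ is in hand inside $L(\mathbb{R})$, Corollary 6.9 applied in $L(\mathbb{R})$ yields $\text{RAT}(G) = \text{RCBR}(G)$ there, and the first step transfers this equality to $V$. The main obstacle is the middle step: obtaining $\mathsf{MEA}$ in $L(\mathbb{R})$ from just $\mathsf{AD}$, rather than from the stronger $\mathsf{AD}_\mathbb{R}$ used in Theorem 4.9. The subtlety is that under $\mathsf{AD}$, full real uniformization is equivalent to $\mathsf{AD}_\mathbb{R}$ and typically fails in $L(\mathbb{R})$; but $\mathsf{MEA}$ requires only \emph{a.e.}\ uniformization, and this weaker form genuinely is available in $L(\mathbb{R})$ via the combination of $\bSigma^2_1$-scales and the Lebesgue measurability of every set of reals. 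A subsidiary bookkeeping task is to verify in detail the absoluteness of $T_i$ and the transfinite iterates $\text{RAT}(G)^\alpha_i$; this is routine but requires noting that the ordinal $\alpha(\text{RAT}(G))$ computed in $L(\mathbb{R})$ coincides with the one computed in $V$, which follows from the absoluteness of each successor-stage definition.
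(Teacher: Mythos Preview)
Your approach is correct and matches the paper's: establish $\mathsf{MUP}$ (equivalently $\mathsf{MEA}$) inside $L(\mathbb{R})$ under $\mathsf{AD}$, apply Corollary~6.8 there, and transfer the conclusion to $V$ by absoluteness. The paper compresses your middle step to the single phrase ``by Solovay reflection'' (citing Larson), and note that where you write ``Corollary 6.9 applied in $L(\mathbb{R})$'' you mean Corollary~6.8, since 6.9 is the statement you are proving.
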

\begin{proof}  The principle $\mathsf{MUP}$ holds in $L(\mathbb{R})$, assuming $AD$ holds there, by Solovay reflection (see \cite{larson}).  So the result follows by Corollary 6.8 and Proposition 6.5, noting that the conclusion is absolute. \end{proof}

\section{The Analytic and Co-Analytic Cases}  Arieli proves in \cite{arieli} what can be regarded as the Fundamental Theorem of Epistemic Game Theory for Continuous Games.

\begin{theorem} (Arieli) Rationalizability is equivalent to Rationality and Common Belief in Rationality for any game with continuous bounded payoffs and Polish strategy spaces. \end{theorem}

To formulate the problem in the framework of interactive epistemics , let $X_i$ be Polish and $$\text{E}_i \subseteq X_i \times \Delta(X_j)$$ be analytic relations, for example the best response relation from a continuous game, which are closed.  It is easy to see by induction that the sets $\text{RAT}(\text{E})_i^{\alpha}$ are analytic for countable $\alpha$, so if $\alpha(\text{E})$ is countable for example, then $\text{RAT}(\text{E})$ itself is analytic, and we can use Lemma 2.11 as in the proof of Theorem 6.6 to directly construct types establishing the equivalence with $\text{RCBR}(\text{E})$.  The problem is that $$\alpha(\text{E}) = \omega_1$$ is a possibility (see section 5.5 of \cite{arieli} for an example).  The argument of \cite{arieli} does not yield a direct proof that $\text{RAT}(\text{E})$ is analytic, although this follows after the fact as $\text{RCBR}(\text{E})$ is analytic in this case.  Moreover, for a game with Borel measurable payoffs, the best response relation is generally co-analytic, 
$$\text{E}^{\text{G}}_i = (X_i \times \Delta(X_j)) \ \setminus $$$$\mbox{proj}_{X_i \times \Delta(X_j)} \{ (s,\mu,t) \in X_i \times \Delta(X_j) \times X_i \ | \ \pi_i(s,\mu) < \pi_i(t,\mu) \}.$$

In this section we will give a direct proof that $\text{RAT}(\text{E})$ is analytic for analytic $\text{E}$, and assuming Projective Determinacy, is projective for projective $\text{E}$.  This analysis will yield an extension of the fundamental theorem to a wider class of games in $\mathsf{ZFC}$ and to all Borel measurable (even analytically measurable) games under $\bPi^1_3$-Determinacy.

\begin{lem}  Suppose $A \subseteq X \times Y$ is $\bSigma^1_1$.  Then $$\{ (x,\mu) \in X \times \Delta(Y) \ | \ \mu(A_x) = 1\}$$ is $\bSigma^1_1$.   \end{lem}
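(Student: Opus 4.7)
The plan is to re-express the condition $\mu(A_x) = 1$ as an existential assertion about a probability measure on a Polish product space, with a Borel body, and then read off the pointclass. The key leverage will be Lubin's lemma (Lemma 2.11 above), which is exactly the tool that lifts a marginal to a measure concentrating on a set whose projection carries full mass.

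First, I would represent $A$ as a projection of a closed set. Fix a closed $B \subseteq X \times Y \times \omega^\omega$ with $A = \mbox{proj}_{X \times Y}(B)$, so that for each $x$ the section $B_x \subseteq Y \times \omega^\omega$ is closed and $A_x = \mbox{proj}_Y(B_x)$. Next I would establish the key equivalence
\[ \mu(A_x) = 1 \iff \exists \nu \in \Delta(Y \times \omega^\omega)\ \bigl( \mbox{marg}_Y(\nu) = \mu \ \wedge\ \nu(B_x) = 1 \bigr). \]
For the $(\Leftarrow)$ direction, if $\nu$ concentrates on $B_x$ then its $Y$-marginal concentrates on $\mbox{proj}_Y(B_x) = A_x$, using that analytic sets are universally measurable so $\mu(A_x)$ makes sense and equals $\nu(\mbox{proj}_Y^{-1}(A_x)) \geq \nu(B_x) = 1$. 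For the $(\Rightarrow)$ direction, apply Lemma 2.11 to the analytic (in fact closed) set $B_x \subseteq Y \times \omega^\omega$ and the measure $\mu \in \Delta(Y)$, noting that $\mu(\mbox{proj}_Y(B_x)) = \mu(A_x) = 1$, to obtain the desired $\nu$.

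Finally, I would read off the complexity of the right-hand side. The marginal map $\mbox{marg}_Y : \Delta(Y \times \omega^\omega) \to \Delta(Y)$ is continuous, so $\{(\mu,\nu) : \mbox{marg}_Y(\nu) = \mu\}$ is a closed subset of $\Delta(Y) \times \Delta(Y \times \omega^\omega)$. By the standard fact that $(x,\nu) \mapsto \nu(B_x)$ is Borel whenever $B$ is Borel (17.25 of \cite{kechris}, proved by a monotone class argument from rectangles), the set $\{(x,\nu) : \nu(B_x) = 1\}$ is Borel in $X \times \Delta(Y \times \omega^\omega)$. Thus the body of the existential is Borel in $(x,\mu,\nu)$, the quantifier ranges over the Polish space $\Delta(Y \times \omega^\omega)$, and the equivalence above exhibits $\{(x,\mu) : \mu(A_x) = 1\}$ as a $\bSigma^1_1$ set.

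The real work is in the equivalence of Step 2, and specifically in the $(\Rightarrow)$ direction which rests on Lubin's lemma; once that equivalence is in place, the complexity calculation is routine bookkeeping. The one point worth emphasizing is that it is essential to pass to a closed (or at worst Borel) representative $B$ before invoking the Kechris fact, since $\nu(A_x) = 1$ for the original analytic $A$ is only an analytic condition in $(x,\nu)$, whereas $\nu(B_x) = 1$ for Borel $B$ is Borel; Lubin's lemma is precisely what makes this substitution legitimate without losing any measure.
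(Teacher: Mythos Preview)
Your proposal is correct and follows essentially the same approach as the paper's primary proof: represent $A$ as the projection of a closed set, invoke Lubin's lemma (Lemma 2.11) to establish the key equivalence, and read off $\bSigma^1_1$ from the Borel body under an existential quantifier over a Polish space of measures. The paper additionally notes that for closed $C$ the set $\{(x,\nu):\nu(C_x)=1\}$ is in fact closed (not merely Borel), and sketches a second route via the Borel map $(x,\mu)\mapsto\delta_x\times\mu$ from 17.25 of \cite{kechris}.
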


\begin{proof}  We may assume that $A = \mbox{proj}_{X \times Y} (C)$ for some closed set $C \subseteq X \times Y \times Z$. 
The set $$\{ (x,\nu) \in X \times \Delta(Y \times Z) \ | \ 
\nu(C_x) = 1\}$$ is closed by standard arguments.
By the argument of Lemma 2.11 we know that there is $\mu$ with $\mu(A_x) = 1$ if and only if there is $\nu$ with $\nu(C_x) = 1$ and $\mbox{marg}_{Y}(\nu) = \mu$.  The desired set 
$$\mbox{proj}_{X \times \Delta(Y)}( \{ (x,\mu,\nu) \ | \ \nu(C_x) = 1 \wedge \mbox{marg}_{Y}(\nu) = \mu \})$$ is analytic.  Alternately by 17.25 of \cite{kechris} we observe that the map $$\phi:X \times \Delta(Y) \rightarrow \Delta(X \times Y)$$ define by $\phi(x,\mu) = \delta_x \times \mu$ is Borel and so $$\phi^{-1}( \{ \nu \in \Delta(X \times Y) \ | \ \nu(A) = 1 \} = \{ (x,\mu) \in X \times \Delta(Y) \ | \ \mu(A_x) = 1\}$$ is analytic as the inverse image of a set which is analytic (by the Lemma 2.11 argument).  \end{proof}

\begin{thm}  (Moschovakis, Kechris) Assume $\bPi^1_{2n+1}$-Determinacy for $n>0$.  Then for any Polish spaces $X,Y$, any $A \subseteq X \times Y$ which is $\bSigma^1_{2n+2}$ can be uniformized by an absolutely measurable function.  \end{thm}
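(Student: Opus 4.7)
The plan is to derive this statement from Moschovakis's Second Periodicity Theorem together with the scale-implies-uniformization machinery. First I would invoke the Second Periodicity Theorem in the following form: under $\bPi^1_{2n+1}$-Determinacy, the pointclass $\bPi^1_{2n+1}$ has the scale property. Using the standard generalization of the Kondô-Addison uniformization theorem to scaled pointclasses, every $\bPi^1_{2n+1}$ subset of a product of Polish spaces admits a uniformization by a $\bPi^1_{2n+1}$-measurable function (that is, a partial function whose graph is $\bPi^1_{2n+1}$).

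Next I would reduce the $\bSigma^1_{2n+2}$ case to the $\bPi^1_{2n+1}$ case by a projection argument. Given $A \subseteq X \times Y$ which is $\bSigma^1_{2n+2}$, write $A = \mbox{proj}_{X \times Y}(B)$ where $B \subseteq X \times Y \times \mathbb{R}$ is $\bPi^1_{2n+1}$. Viewing $B$ as a subset of $X \times (Y \times \mathbb{R})$, apply the uniformization above to obtain a $\bPi^1_{2n+1}$-measurable $g : \mbox{proj}_X(B) \to Y \times \mathbb{R}$ with $(x, g(x)) \in B$ for every $x$ in its domain. Writing $g(x) = (g_1(x), g_2(x))$, the coordinate function $g_1$ uniformizes $A$, since $\mbox{proj}_X(B) = \mbox{proj}_X(A)$ and $(x, g_1(x)) \in A$ follows from $(x, g_1(x), g_2(x)) \in B$.

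It remains to verify that $g_1$ is absolutely (i.e.\ universally) measurable. For this I would appeal to the regularity theorems in the projective hierarchy: under $\bPi^1_{2n+1}$-Determinacy, every $\bSigma^1_{2n+2}$ set (and hence every $\bPi^1_{2n+1}$ set) is universally measurable. Since the Borel preimages of $g_1$ live in the Boolean algebra generated by $\bPi^1_{2n+1}$ sets (namely the graph of $g$ together with coordinate projections and Borel sets), they are universally measurable, so $g_1$ is absolutely measurable.

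The main technical obstacle is the initial appeal to the scale property for $\bPi^1_{2n+1}$ under $\bPi^1_{2n+1}$-Determinacy; this is the deep content of Moschovakis's Second Periodicity Theorem, but since the result is attributed to Moschovakis and Kechris, it is legitimate to cite it directly. The remaining steps (projection trick and universal measurability via determinacy-based regularity) are routine once the periodicity result is in hand.
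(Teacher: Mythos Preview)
Your proposal is correct and essentially matches the paper's proof. The paper cites directly that $\bSigma^1_{2n+2}$ has the uniformization property (from $\bDelta^1_{2n}$-Determinacy, via 6C.5 of Moschovakis) and that $\bSigma^1_{2n+2}$ sets are universally measurable under $\bPi^1_{2n+1}$-Determinacy (6G.12 of Moschovakis), whereas you unpack the first citation by one step: you obtain the scale property for $\bPi^1_{2n+1}$ from Second Periodicity, uniformize there, and then pass to $\bSigma^1_{2n+2}$ via the projection trick. This is exactly how 6C.5 is proved, so the two arguments are the same at different levels of black-boxing; your route has the advantage of making explicit why the uniformizing function for a $\bSigma^1_{2n+2}$ set ends up with a $\bSigma^1_{2n+2}$ graph (as the projection of a $\bPi^1_{2n+1}$ graph), which is what feeds into the measurability step.
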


\begin{proof} For $n>0$, it folllows from $\bDelta_{2n}^1$-Determinacy that the pointclasses $\bPi_{2n+1}^{1}$ and $\bSigma_{2n+2}^{1}$ have the uniformization property (see 6C.5 of \cite{mosch}).  Moreover, $\bPi_{2n+1}^1$-Determinacy implies that the pointclass $\bSigma_{2n+2}^{1}$ is universally measurable (see 6G.12 of \cite{mosch}).  For $n = 0$, the pointclass $\bSigma^1_2$ has the scale property (see 38.5 and 38.7 of \cite{kechris}) and so we can extract $\bSigma^1_2$ uniformizations from the scales.  Under the assumption of Analytic Determinacy, $\bSigma^1_2$ sets are measurable, and so these uniformizations are universally measurable.  \end{proof}

\begin{lem}  Assume $\bPi^1_{2n+1}$-Determinacy.  Given Polish spaces $X,Y$ and $$A \subseteq X \times Y$$ with $A \in \bSigma^1_{2n+2}$ and $\mu \in \Delta(X)$ with $$\mu( \mbox{proj}_{X}(A)) = 1,$$ there is $\nu \in \Delta(X \times Y)$ with $\nu(A) =1$ and $\mbox{marg}(\nu) = \mu$.
\end{lem}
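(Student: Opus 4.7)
The plan is to mirror the proof of Lubin's Lemma (Lemma 2.11), substituting the Jankov--von Neumann uniformization of analytic sets with the higher-pointclass uniformization provided by Theorem 7.8. Indeed, the structure of Lemma 2.11 shows that, once we have a measurable uniformization of $A$ defined on $\mbox{proj}_X(A)$, we can pushforward $\mu$ along the graph of this uniformization to obtain the desired extension $\nu$. So the game here is not to repeat the construction but to verify that all the measurability ingredients remain in place at pointclass level $\bSigma^1_{2n+2}$.

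First, I would apply Theorem 7.8 to obtain an absolutely (i.e., universally) measurable uniformization $f:\mbox{proj}_X(A) \rightarrow Y$ of $A$, so that $(x,f(x)) \in A$ for every $x \in \mbox{proj}_X(A)$. Note that $\mbox{proj}_X(A) \in \bSigma^1_{2n+2}$ is itself $\mu$-measurable under $\bPi^1_{2n+1}$-Determinacy (this is precisely the universal measurability of $\bSigma^1_{2n+2}$ invoked inside the proof of Theorem 7.8), and by hypothesis has $\mu$-measure $1$.

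Second, I would define $\nu$ on Borel $S \subseteq X \times Y$ by
$$\nu(S) = \mu\bigl( \{ x \in \mbox{proj}_X(A) \ | \ (x,f(x)) \in S \} \bigr).$$
For each Borel $S$, the set in braces is the preimage of $S$ under the map $x \mapsto (x,f(x))$, which is $\mu$-measurable because $f$ is $\mu$-measurable; hence $\nu$ is well-defined. $\sigma$-additivity of $\nu$ follows from $\sigma$-additivity of $\mu$, so $\nu$ is a Borel probability measure on $X \times Y$ (after passing to the completion if necessary, as in Remark 2.5). Taking $S = A$ gives $\nu(A) = \mu(\mbox{proj}_X(A)) = 1$, and taking $S = B \times Y$ for Borel $B \subseteq X$ gives $\mbox{marg}_X(\nu)(B) = \mu(B \cap \mbox{proj}_X(A)) = \mu(B)$, so $\mbox{marg}_X(\nu) = \mu$ as required.

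The only non-routine step is the availability of a $\mu$-measurable uniformization $f$, and this is precisely what Theorem 7.8 delivers from $\bPi^1_{2n+1}$-Determinacy. Without determinacy at this level, one has uniformization of $\bSigma^1_{2n+2}$ sets but no guarantee of universal measurability of the uniformizing function (and no guarantee that $\mbox{proj}_X(A)$ itself is $\mu$-measurable). These two measurability facts are exactly the inputs that the analytic case extracts from Jankov--von Neumann; once they are in hand the Lubin-style pushforward argument transfers verbatim.
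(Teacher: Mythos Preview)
Your proof is correct and follows exactly the paper's approach: obtain a universally measurable uniformization of $A$ via the Moschovakis--Kechris theorem under $\bPi^1_{2n+1}$-Determinacy, then push $\mu$ forward along $x \mapsto (x,f(x))$. One correction: the uniformization result you invoke is Theorem~7.2 in the paper's numbering, not Theorem~7.8 (which concerns the complexity of $\text{RAT}(\text{E})$).
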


\begin{proof}  By Theorem 7.2, there is a uniformization of $A$, $$f:\mbox{proj}_{X}(A) \rightarrow Y$$ which is $\bSigma^1_{2n+2}$ and universally measurable.  The push forward $\nu$ of $\mu$ by the map $g(x) = (x,f(x))$ to $X \times Y$ is the desired measure.   \end{proof}

\begin{rem}  Note that for $A \in \bSigma^1_3$ as in Lemma 7.3, the measure extension argument uses $\bSigma^1_4$ uniformizations as in Theorem 7.2, and so we need to assume $\bPi^1_3$-Determinacy. \end{rem}

\begin{lem}   Let $X,Y$ be Polish.  Let $\Gamma = \bSigma^1_n$ or $\Gamma = \bPi^1_n$.  Assume $\bPi^1_{n-1}$-determinacy if $n$ is even, and $\bPi^1_{n}$-determinacy if $n$ is odd.   If $A \subseteq X \times Y$ with $A \in \Gamma$, then  
$$\{(x,\mu) \in X \times \Delta(Y) \ | \ \mu(A_x) = 1\} \in \Gamma$$
and 
$$\{(x,\mu) \in X \times \Delta(Y) \ | \ \mu(A_x) > 0\} \in \Gamma.$$
\end{lem}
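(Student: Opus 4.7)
The plan is to prove the lemma by induction on $n$, handling both the ``$=1$'' and ``$>0$'' clauses for $\bSigma^1_n$ first and then deducing the $\bPi^1_n$ analogues by complementation. The base case $n=1$ is Lemma 7.1 (for the $\bSigma^1_1$ ``$=1$'' clause) combined with the two arguments below; no determinacy is needed at that level, and the stated hypothesis is available vacuously.

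For the inductive step, the core is showing $\{(x,\mu) : \mu(A_x) = 1\} \in \bSigma^1_n$ for $A \in \bSigma^1_n$. I would write $A = \mathrm{proj}_{X\times Y}(\bar{A})$ for some $\bar{A} \in \bPi^1_{n-1}$ with $\bar{A} \subseteq X \times Y \times Z$ and establish
\[
\mu(A_x) = 1 \ \iff\ \exists\, \nu \in \Delta(Y \times Z)\ \bigl(\nu(\bar{A}_x) = 1 \ \wedge\ \mathrm{marg}_Y \nu = \mu\bigr).
\]
The ``$\Leftarrow$'' direction is immediate: $\bar{A}_x$ is universally measurable under the determinacy hypothesis, $\mathrm{proj}_Y \bar{A}_x = A_x$, and so $\mu(A_x) \geq \nu(\bar{A}_x) = 1$. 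The ``$\Rightarrow$'' direction is the measure-extension step in the spirit of Lemma 2.11 and Lemma 7.3, and is where the determinacy hypothesis enters essentially: the relevant $\bPi$-pointclass containing $\bar{A}$ has the scale property under the hypothesis (either $\bPi^1_{n-1}$ itself when $n$ is even, or $\bPi^1_n \supseteq \bPi^1_{n-1}$ when $n$ is odd, via Moschovakis periodicity), yielding a universally measurable uniformization $f$ of $\bar{A}$ in the $Z$ variable; $\nu$ is then the pushforward of $\mu$ along $y \mapsto (y,f(x,y))$ restricted to a $\mu$-conull Borel subset of $A_x$. The complexity bound follows: $\{(x,\nu) : \nu(\bar{A}_x) = 1\}$ is $\bPi^1_{n-1}$ by the induction hypothesis applied to $\bar{A}$, the marginal condition is closed, and projecting the conjunction over the Polish space $\Delta(Y \times Z)$ produces $\bSigma^1_n$.

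For the $\bSigma^1_n$ ``$>0$'' clause I would use a $q\nu$-trick: $\mu(A_x) > 0$ iff there exist $q \in \mathbb{Q} \cap (0,1]$ and $\nu \in \Delta(Y)$ with $\nu(A_x) = 1$ and $q\nu \leq \mu$ (take $\nu = \mu|_{A_x}/\mu(A_x)$ and $q = \mu(A_x)$; the reverse implication is $\mu(A_x) \geq q\nu(A_x) = q > 0$). The relation $q\nu \leq \mu$ is closed in $\Delta(Y)^2$, being the intersection over a countable dense family of bounded nonnegative continuous functions $f$ on $Y$ of the closed conditions $q\int f\,d\nu \leq \int f\,d\mu$. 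Combined with the ``$=1$'' case just proved, $\{(x,\mu) : \mu(A_x) > 0\}$ becomes a countable union over $q$ of existential projections of $\bSigma^1_n$ sets and hence $\bSigma^1_n$. The $\bPi^1_n$ cases follow by complementation: for $A \in \bPi^1_n$, $\mu(A_x) = 1 \iff \neg(\mu((A^c)_x) > 0)$ and $\mu(A_x) > 0 \iff \neg(\mu((A^c)_x) = 1)$, with $A^c \in \bSigma^1_n$. The main obstacle, as noted, is extracting the universally measurable uniformization in the measure-extension step; once that is in hand, the rest is bookkeeping.
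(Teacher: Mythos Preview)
Your proof is correct and follows essentially the same inductive architecture as the paper: measure extension via a determinacy-derived universally measurable uniformization of the $\bPi^1_{n-1}$ set $\bar{A}$, the induction hypothesis applied to $\bar{A}$, and complementation for the $\bPi$ classes. Your $q\nu$-trick for deducing the ``$>0$'' clause from the ``$=1$'' clause at the same level is a clean self-contained alternative to what the paper does (it reruns the measure-extension argument directly for ``$>0$'', using $\nu(\bar A_x)>0$ in place of $=1$, and cites Bertsekas--Shreve for the $\bSigma^1_1$ base case); the only slip is that you need a rational $q$ with $0<q\leq\mu(A_x)$ rather than $q=\mu(A_x)$, which changes nothing in the reverse implication.
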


\begin{proof} We have established the property for $\bSigma^1_1$ in Lemma 7.1.  Thus, for $A \subseteq  X \times Y$ with $B \in \bPi^1_1$ the set 
$$\{ (x,\mu) \in X \times \Delta(Y) \ | \ \mu(A_x) > 0 \}$$ is $\bPi^1_1$, as the complement of such a set.  It is shown for example in Prop. 7.43 of \cite{shreve} that for any $\bSigma^1_1$ subset of a Polish space $A \subseteq X$, $$P(A) = \{ \mu \in \Delta(X) \ | \ \mu(A) > 0\}$$ is $\bSigma^1_1$.  It follows that if $A \subseteq X \times Y$ is $\bSigma^1_1$ then 
$$\{ (x,\mu) \in X \times \Delta(Y) \ | \ \mu(A_x) > 0\} = 
\phi^{-1}(P(A))$$ is $\bSigma^1_1$, where $$\phi: X \times \Delta(Y) \rightarrow \Delta(X \times Y)$$ is the Borel measurable map $\phi(x,\mu) = \delta_{x} \times \mu \in \Delta(X \times Y)$ (see 17.25 of \cite{kechris}).   It now follows by taking complements that if $B \subseteq X \times Y$ is $\bPi^1_1$ then $$\{(x,\mu) \in X \times \Delta(Y) \ | \ \mu(B_x) = 1\} \in \bPi^1_1.$$ This establishes the case $n=1$ with no determinacy assumption. 

In general, once we establish both properties for $\bSigma^1_n$ then we have them for $\bPi^1_n$ so it remains to prove the transfer from $\bPi^1_{n}$ to $\bSigma^1_{n+1}$ using Lemma 7.3.  Suppose $B \subseteq X \times Y$ with $B \in \bSigma^1_{n+1}$.  We want to show that the set 
$\hat{B} = \{ (x,\mu) \in X \times \Delta(Y) \ | \ \mu(B_x) > 0 \}$ is $\bSigma^1_{n+1}$.  Assume $B = \mbox{proj}_{X}(A)$ with $A \subseteq X \times Y \times Z$ and $A \in \bPi^1_{n}$.  We claim that $$\hat{B} = \mbox{proj}_{X \times \Delta(Y)} ( \{(x,\mu,\nu) \ | \ \nu \in \Delta(Y \times Z) \wedge \nu(A_x) > 0 \wedge \mu = \mbox{marg}(\nu)    \})$$

The point is if $(x,\mu) \in \hat{B}$ then we can let $f$ uniformize $A_x$ on $B_x$. Let $\bar{B}_x \subset Y$ be Borel with $\bar{B}_x \cap B_x = 0$ and $\mu(\bar{B}_x) = 1 - \mu(B_x)$.  We can extend $f$ to $B_x \cup \bar{B}_x$ by making $f$ constant on $\bar{B}_x$ and conclude that $f$ is $\bPi^1_n$ (or $\bPi^1_{n+1}$ is $n$ is even).  We can now let $\nu$ be the push forward of $\mu$ by the map $g(x) = (x,f(x))$ from $Y$ to $Y \times Z$.
It now follows that for $C \subseteq X \times Y$ in $\Pi^1_{n}$ that $\{ (x,\mu) \in Y \times \Delta(Y) \ | \ \mu(C_x) = 1\}$.
\end{proof}

\begin{rem}  In particular for $\bSigma^1_3$ we need $\bPi^1_3$-Determinacy, and for $\bSigma^1_2$ we need $\bPi^1_1$-Determinacy.  Note that as an immediate consequence, if the hypotheses apply and  $A \subseteq X$ is in $\Gamma$ then $$\{\mu \in \Delta(X) \ | \ \mu(X) = 1\}$$ is in $\Gamma$.  \end{rem}

\begin{prop}  Assume $\text{E}$ is a pair of relations between strategies and beliefs on underlying Polish spaces.  Then 
\begin{enumerate}

    \item If $\text{E}$ is $\bSigma^1_1$ then $$\text{RAT}(\text{E}) = \text{RCBR}(\text{E}) \Longleftrightarrow \text{ERAT is } \bSigma^1_1$$

    \item If $\bPi^1_1$-Determinacy holds and $\text{E}$ is $\bPi^1_1$ then $$\text{RAT}(\text{E}) = \text{RCBR}(\text{E}) \Longleftrightarrow \text{RAT}(\text{E}) \in \bSigma^1_2$$
     \item If $\bPi^1_{3}$-Determinacy holds and $\text{E}$ is $\bPi^1_{2}$ then $$\text{RAT}(\text{E}) = \text{RCBR}(\text{E}) \Longleftrightarrow \text{RAT}(\text{E}) \in \bSigma^1_{3}  .$$
    
\end{enumerate}
More generally, $\bPi^1_{2n+1}$-Determinacy implies that if $\text{RAT}(\text{E}) \in \bSigma^1_{2n+2}$ then $$\text{RAT}(\text{E}) = \text{RCBR}(\text{E}).$$
\end{prop}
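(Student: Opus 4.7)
My plan is to prove each of the three cases by establishing the two directions separately, with the forward implication coming from a complexity analysis of the type-space definition of $\text{RCBR}(\text{E})$, and the backward implication coming from the inductive construction from Theorem 6.6 combined with the appropriate measure extension result.

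For the forward direction ($\text{RAT}(\text{E}) = \text{RCBR}(\text{E})$ implies the complexity bound), I would bound the complexity of each $\text{RCBR}(\text{E})_i^n$ by induction on $n$. The base case is direct: $\text{RCBR}(\text{E})_i^1$ is a continuous preimage of $\text{E}_i$ under the map $(s,t) \mapsto (s,\mbox{marg}_{X_j}\lambda_i(t))$, so it inherits the complexity of $\text{E}_i$, namely $\bSigma^1_1$, $\bPi^1_1$, or $\bPi^1_2$ in the three respective cases. For the induction step, the condition $\lambda_i(t)(\text{RCBR}(\text{E})_j^n) = 1$ is obtained by composing the homeomorphism $\lambda_i$ with the measure-one operator, and Lemma 7.5 tells us this operator preserves the relevant class under the stated determinacy ($\bPi^1_1$-Determinacy is needed in cases (2) and (3), the latter following from $\bPi^1_3$-Determinacy). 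Taking the countable intersection preserves the class and projecting onto $X_i$ then yields $\text{RCBR}(\text{E})_i \in \bSigma^1_1$, $\bSigma^1_2$, and $\bSigma^1_3$ respectively.

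For the backward direction, I would follow the proof of Theorem 6.6 nearly verbatim, defining the auxiliary sets $A_i^n$ so that all measure coordinates concentrate on $\text{RAT}(\text{E})$-indexed rationalizations. The crucial inductive step $A_i^{n-1} = \mbox{proj}(A_i^n)$ reduces to lifting a measure $\mu_i^n$ to one concentrated on $A_j^{n-1}$, which is precisely a measure extension problem. Here Lemma 2.11 handles the analytic case; Lemma 7.3 with $n=0$ handles $\bSigma^1_2$ under $\bPi^1_1$-Determinacy, and with $n=1$ handles any $\bSigma^1_3$ set (as $\bSigma^1_3 \subseteq \bSigma^1_4$) under $\bPi^1_3$-Determinacy, which is why that level of determinacy appears in case (3). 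The general statement at level $2n+2$ follows in the same way, using Lemma 7.3 directly.

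The main obstacle, and the reason the proposition is phrased level-by-level, is to ensure that the sets $A_i^n$ themselves stay within the pointclass to which the measure extension result applies. One has to carefully absorb the $\text{RCBR}(\text{E})^{*,n}$-membership condition, coherence and marginal constraints, and the measure-one requirement on $A_j^{n-1}$ into the ambient class. Because the relevant classes are closed under finite intersection, composition with continuous maps, and the measure-one operator under the stated determinacy (Lemma 7.5), together with the projection step in Lemma 7.3 that handles measure extension via a measurable uniformization, this bookkeeping goes through uniformly at each level.
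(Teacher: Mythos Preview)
Your proposal is correct and follows essentially the same approach as the paper: the forward implications come from an inductive complexity analysis of the sets $\text{RCBR}(\text{E})_i^n$ via Lemmas 7.1 and 7.5, and the backward implications rerun the type-construction argument of Theorem 6.6 with the measure-extension step supplied by Lemma 2.11 or Lemma 7.3, while folding the complexity bound on the auxiliary sets $A_i^n$ into the induction hypothesis. Your treatment of the bookkeeping (closure under the measure-one operator, coherence constraints, and the $\bSigma^1_3 \subseteq \bSigma^1_4$ observation explaining why $\bPi^1_3$-Determinacy is the right hypothesis in case (3)) is in fact slightly more explicit than the paper's own account.
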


\begin{proof}  The key point here is that Lemma 7.3 and Lemma 7.5 establish the properties of a pointclass $\Gamma = \bSigma^1_n$ required for the type construction argument of Theorem 6.6 under the assumption that $\text{RAT}(\text{E}) \in \Gamma$.  We need that  $A \in \Gamma$ implies 
$\{ \mu \ | \mu(A) = 1\} \in \Gamma$ and the extension property in 7.3.  
One simply folds into the induction hypothesis of the proof of Theorem 6.6 that the sets $A_i^n \in \Gamma$ for each $i$ and $n$.  Thus, the reverse implications above all hold, and more generally.
$\bPi^1_{2n+1}$-Determinacy implies that if $\text{RAT}(\text{E}) \in \bSigma^1_{2n+2}$ then  $$\text{RAT}(\text{E}) = \text{RCBR}(\text{E}).$$

The forward implications simply observe that we expect $\text{RAT}(\text{E})$ to have the same complexity as $\text{RCBR}(\text{E})$ if the fundamental theorem is to holds.  

If $\text{E}$ is $\bSigma^1_1$ then so is  $\text{RCBR}_i^{1}(\text{E})$ as the maps $$\mbox{marg}_{X_j}: \Delta(X_j \times T_j) \rightarrow \Delta(X_j)$$ and $\lambda_i$ are both continuous.  Thus each $\text{RCBR}_i^{n}(\text{E})$ is $\bSigma^1_1$ by Lemma 7.1, so each $\text{RCBR}_i(\text{E})$ is $\bSigma^1_1$ as the projection of the countable intersection of $\bSigma^1_1$ sets.    

For (2) we use the fact from Lemma 7.5 that 
for any $\bPi^1_1$ subset of a Polish space $B \subseteq X$, $$\{ \mu \in \Delta(X) \ | \ \mu(B) = 1)$$ is $\bPi^1_1$.  Again because the maps $\lambda_i$ and $\mbox{marg}_{X_j}$ are continuous, the set $\text{RCBR}(\text{E})_i^1$ is $\bPi^1_1$, and by induction the sets $\text{RCBR}(\text{E})^n_i(G)$ are $\bPi^1_1$ and so $\text{RCBR}(\text{E})$ is $\bSigma^1_2$ as the projection of the intersection of these sets.  Thus, $\text{RAT}(\text{E}) \in \bSigma^1_2$.

For (3) we assume $\bPi^1_{3}$-Determinacy and that $\text{E}$ is $\bPi^1_{2}$.  Again by lemma 7.5, the set of measures that concentrate on a given $\bPi^1_2$ set is itself $\bPi^1_2$.  Thus $\text{RCBR}(\text{E})$ is $\Sigma^1_{3}$ so $\text{RAT}(\text{E}) \in \bSigma^1_{3}$.  \end{proof}

\begin{thm}  Suppose $\text{E}$ is a pair of relations between strategies and beliefs on underlying Polish spaces.  

\begin{enumerate}
    \item If $\text{E}$ is $\bSigma^1_1$ then $\text{RAT}(\text{E})$ is $\bSigma^1_1$.  
    \item Assuming $\bPi^1_3$-determinacy, if $\text{E}$ is $\bPi^1_2$ then $\text{RAT}(\text{E})$ is $\bSigma^1_3$. 
\end{enumerate}

\noindent More generally, under $\bPi^1_{2n+1}$-determinacy, if $\text{E}$ is $\bPi^1_{2n}$ then $\text{RAT}(\text{E})$ is $\bSigma^1_{2n+1}$.

\end{thm}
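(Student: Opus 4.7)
The plan is to establish the stated complexity of $\text{RAT}(\text{E})$ by first computing the complexity of $\text{RCBR}(\text{E})$ from the defining hierarchy, and then showing $\text{RAT}(\text{E}) \subseteq \text{RCBR}(\text{E})$ via an iterated measure-extension argument using Lemma 2.11 in part (1) and Lemma 7.3 under the determinacy hypothesis in the general case. Combined with Lemma 3.5, this yields $\text{RAT}(\text{E}) = \text{RCBR}(\text{E})$ with the computed complexity.

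First, I would compute the complexity of $\text{RCBR}(\text{E})_i$ via its defining hierarchy. Since $\lambda_i$ and $\mbox{marg}_{X_j}$ are continuous, $\text{RCBR}(\text{E})^1_i$ inherits the complexity of $\text{E}_i$. At each successor step, Lemma 7.5 shows that the measure-one condition preserves the pointclass (invoking $\bPi^1_{2n+1}$-determinacy in the general case), so by induction each $\text{RCBR}(\text{E})^m_i$ remains in $\bSigma^1_1$ in part (1) and in $\bPi^1_{2n}$ in the general case. Countable intersection preserves these pointclasses, and projecting over $T_i$ gives $\text{RCBR}(\text{E})_i \in \bSigma^1_1$, respectively $\text{RCBR}(\text{E})_i \in \bSigma^1_{2n+1}$.

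Second, to show $\text{RAT}(\text{E}) \subseteq \text{RCBR}(\text{E})$, I would mimic the construction in the proof of Theorem 6.6, replacing $\mathsf{MEA}$ with Lemma 2.11 in part (1) and with Lemma 7.3 in the general case under $\bPi^1_{2n+1}$-determinacy (noting that $\bPi^1_{2n} \subseteq \bSigma^1_{2n+2}$, so the lifting lemma applies). Given $s \in \text{RAT}(\text{E})_i$, I would inductively construct a hereditarily coherent sequence $(\delta^1, \delta^2, \ldots)$ with $(s, \delta^1, \ldots, \delta^m) \in \text{RCBR}(\text{E})^{*,m}_i$ at each level, by lifting $\delta^m$ to $\delta^{m+1}$ concentrating on $\text{RCBR}(\text{E})^{*,m}_j$ (which is in the appropriate pointclass by the first step). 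Kolmogorov Extension as in Definition 4.1 then assembles these into a type $t \in T_i$, and Lemma 4.8 confirms $(s, t) \in \text{RCBR}(\text{E})^n_i$ for every $n$, so $s \in \text{RCBR}(\text{E})_i$.

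The main obstacle is the extension step: the measure $\delta^m$ selected at level $m$ must concentrate on $\mbox{proj}(\text{RCBR}(\text{E})^{*,m}_j)$ for the chosen lifting lemma to apply. The resolution is a parallel induction verifying that the projection identity $A^m_j = \mbox{proj}(A^{m+1}_j)$ from Theorem 6.6 holds when the $A^m_j$ are replaced by the full-hierarchy sets $\text{RCBR}(\text{E})^{*,m}_j$ restricted to tuples extendible to arbitrary depth. One begins with $\delta^1$ satisfying $\delta^1(\text{RAT}(\text{E})_j) = 1$, which by the fixed-point property of $\text{RAT}(\text{E})_j$ already concentrates on every level-$m$ projection; each subsequent $\delta^{m+1}$ is then chosen via Lemma 2.11 or Lemma 7.3 to continue to concentrate on the next-level analogue. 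This parallel induction is precisely where the pointclass-complexity computation from the first step is invoked — without it, the extension lemmas would not apply. Once the induction closes, $\text{RAT}(\text{E}) \subseteq \text{RCBR}(\text{E})$, hence $\text{RAT}(\text{E}) = \text{RCBR}(\text{E})$ has the complexity given by the first step.
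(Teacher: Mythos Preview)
Your approach is circular. The lifting argument you describe is exactly the reverse implication of Proposition~7.7, and that proposition is stated as a \emph{biconditional} precisely because the lifting step requires the complexity of $\text{RAT}(\text{E})$ as an input, not as an output. Concretely: to lift $\delta^m$ to $\delta^{m+1}$ via Lemma~2.11 or Lemma~7.3, you need the target set at level $m$ to lie in the relevant pointclass \emph{and} to have full projection onto the set where $\delta^m$ lives. In the Theorem~6.6 argument this target is $A^m_j$, which is defined using $\text{RAT}(\text{E})_j$; that is what forces the projection identity $A^m_j=\mathrm{proj}(A^{m+1}_j)$ to hold. But then $A^m_j$ is in $\bSigma^1_1$ (resp.\ $\bSigma^1_{2n+1}$) only if $\text{RAT}(\text{E})$ already is. If instead you drop the $\text{RAT}(\text{E})$ constraint and use the raw sets $\text{RCBR}(\text{E})^{*,m}_j$, the projection identity fails: not every $(t,\nu)\in\text{E}_j$ has $\nu(\mathrm{proj}_{X_i}\text{E}_i)=1$, so $\text{RCBR}(\text{E})^{*,1}_j \not\subseteq \mathrm{proj}(\text{RCBR}(\text{E})^{*,2}_j)$ in general, and the induction stalls at the second step.

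Your proposed fix --- ``$\delta^1(\text{RAT}(\text{E})_j)=1$, which by the fixed-point property already concentrates on every level-$m$ projection'' --- is the assertion $\text{RAT}(\text{E})_j\subseteq\mathrm{proj}_{X_j}(\text{RCBR}(\text{E})^{*,m}_j)$ for all $m$, i.e.\ $\text{RAT}(\text{E})_j\subseteq\text{RCBR}(\text{E})_j$, which is the conclusion you are trying to establish. The paper breaks this circle by giving a \emph{direct} computation of the complexity of $\text{RAT}(\text{E})$ that makes no reference to $\text{RCBR}$: one writes the complement $X_i\setminus\text{RAT}(\text{E})_i$ as the least fixed point $\Psi^\infty(\emptyset)$ of a monotone expansion operator $\Psi$, checks via Lemma~7.1 (resp.\ Lemma~7.5) that $\Psi$ is $\Gamma$-on-$\Gamma$ for the ranked pointclass $\Gamma=\bPi^1_1$ (resp.\ $\bPi^1_{2n+1}$), and applies Kleene's recursion theorem along a $\Gamma$-norm to conclude $\Psi^\infty(\emptyset)\in\Gamma$. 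Only after this is established does Proposition~7.7 close the loop to yield $\text{RAT}(\text{E})=\text{RCBR}(\text{E})$.
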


\begin{proof}  We first treat the analytic case. For simplicity, we assume $X = X_1 = X_2$ and $\text{E} = \text{E}_1 = \text{E}_2$.  Thus $\text{RAT}_i^{\alpha}(\text{E}) = \text{RAT}_j^{\alpha}(\text{E})$ for every $\alpha$ so it becomes a one player problem.  Define the expansion operator  (in the terminology of Exercise 35.29 of \cite{kechris}) $$\Psi:P(X) \rightarrow P(X)$$  by $$\Psi(A) = \{ x \in X \ | \ x \in A \vee \neg \exists \mu \in \Delta(X) \ ((x,\mu) \in \text{E}_i \wedge \mu(X \setminus A) = 1) \}.$$  We set $\Gamma = \bPi^1_1$ and note that $\Gamma$ is a ranked class (see 34.1 of \cite{kechris}).

We claim that $\Psi$ is $\Gamma$ on $\Gamma$ in the sense of \cite{kechris}.  We have to show that if $Y$ is a Polish space and $A \subseteq Y \times X$ with $A \in \Gamma$ then $$A_{\Psi} = \{ (x,y) \ | \ x \in \Psi(A_y) \}$$ is also in $\Gamma$.  By definition, $A_{\Psi}$ is the set of $(x,y)$ such that $x \in A_y$ or there is no $\mu$ such that $(x,\mu) \in \text{E}_i$ and $\mu(X \setminus A_y) = 1$.  Thus, we may apply Lemma 7.1 to the set 
$B = \{(y,x) \ | \ (y,x) \notin A\}$ which is $\bSigma^1_1$ to
conclude that $$\hat{B} = \{ (y ,\mu) \ | \ \mu(B_y) = 1\} = \{ (y ,\mu) \ | \ \mu(X \setminus A_y) = 1\}$$ is $\bSigma^1_1$.
Thus $A_{\Psi} = \{(x,y) \ | \ (y,x) \in A \vee \neg \exists \mu ((x,\mu) \in \text{E}_i \wedge (y,\mu) \in \hat{B}) \}$ is in $\Gamma$.  Now let $U$ be $\Gamma$-universal for $2^{\omega} \times X$ and let 
$$P = \{ (q,x) \ | \ x \in \Psi( \{y \ | \ (q,y) <^{*}_{\psi}  (q,x)\} $$
where $<^{*}_{\psi}$ is from a $\Gamma$-rank $\psi$ (see 34.2 of \cite{kechris}).  To see that $P \in \Gamma$ we 
let $$A = \{ (q,x,z) \ | \ (q,z)  <^{*}_{\psi}  (q,x) \} \in \Gamma$$
so $(q,x,y) \in A_{\Psi}$ if and only if $y \in \Psi (\{ z \ | \ (q,z)  <^{*}_{\psi}  (q,x)  \}   )$.  Thus $A_{\Psi} \in \Gamma$ by the analysis above and so $$P = \{ (q,x) \ | \ (q,x,x) \in A_{\Psi} \} \in \Gamma.$$
By Kleene's Theorem (35.26 of \cite{kechris}; we may assume we chose $U$ to have the property) there is 
$p_0$ with $P_{p_0} = U_{p_0},$  
that is $$\{ x \ | \ (p_0,x) \in P \} = \{ x \ | \ (p_0,x) \in U \}.$$  It is now straightforward to check that $\Psi^{\infty}(\emptyset) = P_{p_0}$ which belongs to $\Gamma$.   But $$\Psi^{\infty}(\emptyset) = X_i \setminus \text{RAT}(\text{E})_i$$ so we conclude that $\text{RAT}(\text{E})_i$ is $\bSigma^1_1$.  To see this first note by induction on the rank $\gamma = \psi(p_0,x)$ of elements $x \in P_{p_0}$ that $x \in \Psi^{\gamma+1}(\emptyset) \subseteq \Psi^{\infty}(\emptyset)$.  Thus $P_{p_0} \subseteq \Psi^{\infty}(\emptyset)$.  Similarly by induction on $\beta$ we see that $\Psi^{\beta}(\emptyset) \subseteq P_{p_0}.$

Now, for the general case where the spaces $X_i$ and the relations $\text{E}_i$ are different, we work with the direct sum $X = X_1 \oplus X_2$, and define the expansion operator $\Psi:P(X) \rightarrow P(X)$ appropriately to handle both players at once, that is $\Psi(A)$ is the set of $x \in X$ such that $x \in A$ or 
$$(x \notin X_1 \vee \neg \exists \mu \in \Delta(X_2) \ ((x,\mu) \in \text{E}_1 \wedge \mu(X_2 \setminus A) = 1) ) \wedge $$$$(x \notin X_2 \vee \neg \exists \mu \in \Delta(X_1) \ ((x,\mu) \in \text{E}_2 \wedge \mu(X_1 \setminus A) = 1).$$
Both $X_1,X_2$ are closed in $X$ and the analysis is much the same as before.  We get $$U_{p_0} = \Psi^{\infty}(\emptyset) = X_1 \oplus X_2   \setminus \text{RAT}(\text{E})_1 \oplus \text{RAT}(\text{E})_2 \in \Gamma$$ so $\text{RAT}(\text{E})_1 \oplus \text{RAT}(\text{E})_2 \in \bSigma^1_1$ hence  $\text{RAT}(\text{E}) \in \bSigma^1_1$ as desired.

Finally for (2) let $\text{E}$ be $\bPi^1_2$.  In this case the associated expansion operator $\Psi$ is $\Gamma$ on $\Gamma$ for $\Gamma = \bPi^1_3$ which is a ranked class in the sense of \cite{kechris}.  For $\text{E} \in \bPi^1_1$ the  
operator is $\bPi^1_2$ which is not a ranked class.  The argument is as abovem, using Lemma 7.5 in place of Lemma 7.1.  We conclude that each $\text{RAT}(\text{E})_i$ is $\bSigma^1_3$.  \end{proof}

\begin{thm}  Suppose $\pi:X \times Y \rightarrow \mathbb{R}$ is bounded and Baire class 1, where $X,Y$ are locally compact Polish spaces.  Then the best response relation $$\text{E}^{\pi} = \{ (s,\mu) \in X \times \Delta(Y) \ | \ \forall t \in X \ \pi(s,\mu) \geq \pi(t,\mu) \}$$ is Borel. 
\end{thm}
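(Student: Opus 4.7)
The plan is to first show that the extension $F\colon X \times \Delta(Y) \to \mathbb{R}$ defined by $F(s,\mu) = \int \pi(s,y)\,d\mu(y)$ is Baire class~1, and then extract $\text{E}^\pi$ from $F$ by a supremum argument using $\sigma$-compactness of $X$. Choose continuous functions $\pi_n\colon X \times Y \to \mathbb{R}$ with $\pi_n \to \pi$ pointwise, truncated so that $|\pi_n| \le \|\pi\|_\infty$. The dominated convergence theorem then gives $F_n(s,\mu) := \int \pi_n(s,y)\,d\mu(y) \to F(s,\mu)$ pointwise, so it suffices to check that each $F_n$ is jointly continuous on $X \times \Delta(Y)$. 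If $(s_k,\mu_k) \to (s,\mu)$, the set $\{s\} \cup \{s_k\}_k$ is compact in $X$ and tightness of the weakly convergent sequence $\{\mu_k\}$ produces a compact $L \subseteq Y$ concentrating $(1-\varepsilon)$-mass uniformly; since $\pi_n$ is uniformly continuous on the compact product $(\{s\} \cup \{s_k\}_k) \times L$, one controls the integrand for $|F_n(s_k,\mu_k) - F_n(s,\mu_k)|$ on $L$ by uniform continuity and off $L$ by the uniform bound on $\pi_n$. Combining with the Portmanteau convergence $\int \pi_n(s,\cdot)\,d\mu_k \to \int \pi_n(s,\cdot)\,d\mu$ yields joint continuity of $F_n$, so $F$ is Baire class~1 and in particular Borel.

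Since $X$ is locally compact and second countable, write $X = \bigcup_n K_n$ with each $K_n$ compact, and define $g_n(\mu) = \sup_{t \in K_n} F(t,\mu)$. Because $F$ is Baire class~1, the superlevel set $\{F > r\}$ is $F_\sigma$; intersecting with $K_n \times \Delta(Y)$ yields a countable union of closed sets, and projection along the compact first factor $K_n$ is a closed map (the tube lemma), so
$$
\{\mu : g_n(\mu) > r\} = \mbox{proj}_{\Delta(Y)}\bigl(\{F > r\} \cap (K_n \times \Delta(Y))\bigr)
$$
is $F_\sigma$, whence $g_n$ is Borel. Taking the countable supremum, $g(\mu) := \sup_{t \in X} F(t,\mu) = \sup_n g_n(\mu)$ is Borel, and the best response relation is
$$
\text{E}^\pi = \{(s,\mu) : F(s,\mu) \ge g(\mu)\},
$$
the preimage of $[0,\infty)$ under the Borel map $(s,\mu) \mapsto F(s,\mu) - g(\mu)$.

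The main obstacle is the Borel measurability of the pointwise supremum $g$; a priori it is only coanalytic, being the complement of a projection of a Borel set. The key point is that Baire class~1 regularity yields $F_\sigma$ superlevel sets, not merely Borel ones, and this combined with the closed-map property of projections along a compact first factor upgrades each $g_n$ to a Borel function. Local compactness of $X$ is essential both for producing the compact exhaustion $X = \bigcup K_n$ and for keeping this projection argument entirely within $\mathsf{ZFC}$, without any appeal to uniformization or determinacy.
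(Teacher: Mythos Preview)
Your proof is correct and takes a genuinely different route from the paper. The paper argues that $\text{E}^\pi$ is analytic (it is already visibly co-analytic) via a reflection argument with $\omega$-models: one shows that $(s,\mu)\in\text{E}^\pi$ if and only if some countable model containing the approximating sequence $\{\pi_n\}$ and $(s,\mu)$ believes $(s,\mu)\in\text{E}^\pi$, the nontrivial direction using local compactness to trap a better response inside a compact neighborhood visible to the model and then the finite intersection property of the sets $\{\bar t\in K:\pi_n(\bar t,\mu)\ge p\}$ to produce such a $\bar t$ in the model. A remark following the proof recasts this as an explicit Borel condition in terms of these compact sets. Your argument bypasses both the model theory and Suslin's theorem entirely: you observe that the integrated payoff $F$ is itself Baire class~1, so its superlevel sets are $F_\sigma$ rather than merely Borel, and this lets the projection along each compact piece $K_n$ stay closed by the tube lemma, yielding the value function $g$ as a Borel supremum directly. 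This is more elementary and arguably more transparent about where each hypothesis enters; in particular your proof, like the paper's, uses local compactness only for $X$ (the supremum variable), not for $Y$. The paper's approach, on the other hand, fits its broader descriptive-set-theoretic narrative and motivates the $\omega$-model technique that recurs when pushing beyond Baire class~1.
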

\begin{proof}  The relation $\text{E}^{\pi}$ is clearly co-analytic as 
$$\text{E}^{\pi} = X \times \Delta(Y) \setminus \mbox{proj}_{X \times \Delta(Y)} \{ (s,\mu,t) \in X \times \Delta(Y) \times X \ | \ \pi(s,\mu) < \pi(t,\mu) \}$$
and the inequality $\pi(s,\mu) < \pi(t,\mu)$ is Borel for any Borel map $\pi$.  We will show it is analytic as well, and hence Borel by Suslin's Theorem.  Let $$\pi_n:X \times Y \rightarrow \mathbb{R}$$ be continuous with $\pi$ as the pointwise limit.  For $(s,\mu) \in X \times \Delta(Y)$ we will show that $(s,\mu) \in E^{\pi}$ if and only if there is an $\omega$-model $M$ of a sufficient fragment of set theory containing $\{ \pi_n \ | \ n<\omega\}$ as well as $(s,\mu)$ such that $(s,\mu) \in E^{\pi}$ holds in $M$.  This is an analytic assertion and so will complete the proof.  The forward direction is immediate so it suffices to show that if $M$ is such a model with $(s,\mu) \in M$ and 
$\pi(s,\mu) < \pi(t,\mu)$ for some $t \in X$, then $s$ is not a best response to $\mu$ in $M$.  Note that we may not assume that $t$ is in $M$.  Note also that each $\pi_n(x,\mu)$ is continuous as a function of $x$ and  converges to $\pi(x,\mu)$ for every $x \in X$.  Let $p$ be a rational with $$\pi(s,\mu) < p <  \pi(t,\mu)$$ and let $N$ be such that $p < \pi_n(t,\mu)$ for $n > N$.  Let $K \in M$ be the closure of a basic open neighborhood of $t$ which is compact.  This is where local compactness is used.  $M$ may not see $t$ but it sees such an $K$ as a countable base for the topology of $X$ lies in $M$.  Let $$K_n = \{ \bar{t} \in K \ | \ p \leq \pi_n(\bar{t},\mu) \}$$  as computed in $M$ and note that each $K_n$, for $n>N$, is nonempty and compact in $M$ by continuity.  Moreover, the sets $\{K_n \ | \ n>N\}$
have the finite intersection property in $M$, and hence there is $\bar{t} \in M$ with $p \leq \pi_n(\bar{t},\mu) $ for $n > N$.  Thus $$\pi(s,\mu) < p \leq \pi(\bar{t},\mu)$$ in $M$ so $M$ sees that $s$ is not a best response to $\mu$ as desired. \end{proof}

\begin{rem}  For a more direct proof of Theorem 7.4 we define 
$Q(\mu,p_1,p_2,p)$ by $$\forall N \ \exists \bar{N} > N \ \forall q \in [p_1,p_2] \ \exists n \in [N,\bar{N}] \ \pi_n(q,\mu) < p.$$
Then is is easy to check that $(s,\mu) \in E^{\pi}$ if and only if $$\forall p,p_1,p_2 \in \mathbb{Q} \ (p > \pi(s,\mu) \wedge (p_1 < p_2)) \Longrightarrow Q(\mu,p_1,p_2,p).$$
This asserts that in every compact neighborhood $[p_1,p_2]$, and every $p >\pi(s,\mu)$, no tail of the sequence of compact sets $$\{ x \ | \ x \in [p_1,p_2] \ \wedge \ \pi_n(x,\mu) \geq p \}$$ has the finite intersection property, which is easily seen to be equivalent to $$\neg \exists t \in \mathbb{R} \ \pi(s,\mu) < \pi(t,\mu).$$
Thus $E^{\pi}$ is Borel.

\end{rem}

\begin{thm}   Rationalizability is equivalent to Rationality and Common Belief in Rationality for any strategic form game with bounded Baire Class 1 payoffs played on a locally compact Polish strategy spaces. 
\end{thm}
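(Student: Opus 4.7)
The plan is to reduce this theorem directly to the complexity analysis developed in this section and the interactive epistemics framework of Section 3. Given a game $G = (\pi_1,\pi_2)$ with bounded Baire class 1 payoffs on locally compact Polish strategy spaces $X_1, X_2$, I would first apply Theorem 7.9 to each player to conclude that the best response relations $\text{E}_i = \text{E}^{\pi_i}$ are Borel, so that the pair $\text{E} = (\text{E}_1,\text{E}_2)$ is in particular $\bSigma^1_1$.

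Next, I would invoke Theorem 7.8(1) to conclude that $\text{RAT}(\text{E})$ is itself $\bSigma^1_1$. This is the key structural input, whose proof runs via the expansion operator argument on the ranked pointclass $\bPi^1_1$ together with Lemma 7.1. With both $\text{E}$ and $\text{RAT}(\text{E})$ now in $\bSigma^1_1$, Proposition 7.7(1) yields $\text{RAT}(\text{E}) = \text{RCBR}(\text{E})$ at once, since the measure extension machinery needed for the type construction is available in the analytic case via Lemma 2.11. Finally, as in Corollary 3.7, the identifications $\text{RAT}(G) = \text{RAT}(\text{E})$ and $\text{RCBR}(G) = \text{RCBR}(\text{E})$ for the best response relation of a game translate this into $\text{RAT}(G) = \text{RCBR}(G)$, which is the stated equivalence.

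The substantive work in this argument is done entirely upstream of the statement: Theorem 7.9's proof required the local compactness hypothesis and an absoluteness argument over $\omega$-models containing the continuous approximants $\pi_n$ of the Baire class 1 payoff, while Theorem 7.8 required the ranked-class expansion operator analysis to keep $\text{RAT}(\text{E})$ within $\bSigma^1_1$. Once those two results are in hand, the present theorem is an essentially immediate corollary, and I would not expect any further obstacle. I would, however, note explicitly at the end that no determinacy hypothesis is needed here, in contrast to the statement for $\bPi^1_2$ relations in Theorem 7.8(2).
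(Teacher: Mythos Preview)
Your proposal is correct and follows exactly the same route as the paper: apply the Baire class~1 result (Theorem~7.9) to get Borel best response relations, then combine Theorem~7.8(1) and Proposition~7.7(1) to conclude $\text{RAT}(\text{E}) = \text{RCBR}(\text{E})$. In fact your citation of the relevant results is cleaner than the paper's own proof, which contains apparent numbering slips (it writes ``Theorem~7.8'' for the Borel best-response result and ``Proposition~7.2'' where it means Proposition~7.7).
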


\begin{proof}  By Theorem 7.8 the best response relation from such a game is Borel, and so the result follows from (1) of Proposition 7.2 together with (1) of Theorem 7.7.  \end{proof}

\begin{rem}  Both lower and upper semi-continuous functions are of Baire class 1.  In general, Baire class 1 functions are characterized by having a meager set of discontinuities (see \cite{oxtoby} or \cite{kechris}).  A Polish space is locally compact if and only if it is homeomorphic to open subspace of a compact Polish space.  \end{rem}

\begin{thm} (Arieli) Rationalizability is equivalent to Rationality and Common Belief in Rationality for any strategic form game with bounded and continuous payoffs played on Polish strategy spaces. 
\end{thm}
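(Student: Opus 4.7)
The plan is to reduce the statement to the analytic case of the framework developed earlier in this section by showing that, for bounded continuous payoffs on Polish strategy spaces, the best response relation is in fact closed. Once this is established, everything follows immediately from the proposition and theorem in this section concerning $\bSigma^1_1$ relations.

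First, I would verify that the extended payoff $(s,\mu) \mapsto \pi_i(s,\mu) = \int \pi_i(s,y)\,d\mu(y)$ is jointly continuous on $X_i \times \Delta(X_j)$. For $(s_n,\mu_n) \to (s,\mu)$ I would split
$$|\pi_i(s_n,\mu_n) - \pi_i(s,\mu)| \leq \left|\int [\pi_i(s_n,y) - \pi_i(s,y)]\,d\mu_n(y)\right| + |\pi_i(s,\mu_n) - \pi_i(s,\mu)|.$$
The second term tends to zero by the Portmanteau theorem applied to the bounded continuous function $\pi_i(s,\cdot)$. For the first, Prokhorov tightness of the weakly convergent sequence $\{\mu_n\}$ furnishes a compact $K \subseteq X_j$ with $\mu_n(K) > 1 - \varepsilon$ uniformly in $n$; on the compact set $(\{s_n\}_n \cup \{s\}) \times K$ the function $\pi_i$ is uniformly continuous, so $\sup_{y \in K}|\pi_i(s_n,y) - \pi_i(s,y)| \to 0$, while the contribution off $K$ is bounded by $2\|\pi_i\|_\infty \varepsilon$, and $\varepsilon$ was arbitrary.

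Given joint continuity, the set $\{(s,\mu,t) \in X_i \times \Delta(X_j) \times X_i : \pi_i(s,\mu) < \pi_i(t,\mu)\}$ is open, and since the projection of an open set in a product along a factor is open, its projection onto $X_i \times \Delta(X_j)$ is open. Hence the best response relation $\text{E}^G_i$, being the complement of this projection, is closed, and in particular $\bSigma^1_1$. I would then invoke the earlier theorem of this section showing $\text{RAT}(\text{E}) \in \bSigma^1_1$ whenever $\text{E} \in \bSigma^1_1$, followed by the proposition whose analytic clause states that $\text{RAT}(\text{E}) = \text{RCBR}(\text{E})$ exactly when $\text{RAT}(\text{E}) \in \bSigma^1_1$. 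Applied to $\text{E}^G$, this yields $\text{RAT}(G) = \text{RCBR}(G)$.

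The only step with any real content is the joint continuity of the extended payoff, which is classical (essentially Glicksberg's observation), and no substantive obstacle is anticipated. The analytic work has been done earlier in the section; the continuous case is immediate once one recognizes that the best response relation is not merely Borel but closed, a strictly stronger condition than the Borel conclusion available for Baire class one payoffs on locally compact spaces.
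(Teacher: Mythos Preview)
Your proposal is correct and follows essentially the same route as the paper: the paper's proof simply asserts that the best response relation is closed and then invokes part (1) of the proposition (the $\bSigma^1_1$ equivalence) together with part (1) of the theorem (that $\text{RAT}(\text{E})$ is $\bSigma^1_1$ when $\text{E}$ is). Your added verification of joint continuity of $(s,\mu)\mapsto\pi_i(s,\mu)$ via Prokhorov tightness just fills in a detail the paper leaves implicit.
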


\begin{proof}  The best response relation from such a game is closed so the result follows from (1) of Proposition 7.6 together with (1) of Theorem 7.7.  \end{proof}

\begin{thm}   Assume $\bPi^1_3$-Determinacy.  Then Rationalizability is equivalent to Rationality and Common Belief in Rationality for any strategic form game with bounded Borel measurable (even anlytically measurable) payoffs and Polish strategy spaces. 
\end{thm}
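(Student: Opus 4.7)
The plan is to verify that the best-response relations of such a game lie in $\bPi^1_2$ and then feed this into the complexity machinery already developed in this section; specifically Theorem 7.8(2) and Proposition 7.7(3), which together give $\text{RAT}(\text{E}) = \text{RCBR}(\text{E})$ for any $\bPi^1_2$ relation $\text{E}$ assuming $\bPi^1_3$-Determinacy.

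Fix a game $G = (\pi_1,\pi_2)$ with bounded analytically measurable payoffs on Polish spaces $X_1, X_2$. Recall that analytically measurable means measurable with respect to the $\sigma$-algebra $\sigma(\bSigma^1_1)$ generated by analytic sets, which is contained in $\bDelta^1_2$ because $\bSigma^1_2$ and $\bPi^1_2$ are $\sigma$-algebras containing $\bSigma^1_1$. The key step is to show that the expected-payoff extension
\[
\pi_i(s,\mu) = \int \pi_i(s,t)\,d\mu(t), \qquad (s,\mu) \in X_i \times \Delta(X_j),
\]
is $\sigma(\bSigma^1_1)$-measurable jointly in $(s,\mu)$. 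For analytic $A \subseteq X_i \times X_j$, the set $\{(s,\mu) : \mu(A_s) = 1\}$ is $\bSigma^1_1$ by Lemma 7.1, and the set $\{(s,\mu) : \mu(A_s) > 0\}$ is $\bSigma^1_1$ by the unconditional $n=1$ case of Lemma 7.5. A standard monotone-class argument (with bounded convergence at the induction step) then lifts this to joint $\sigma(\bSigma^1_1)$-measurability of $(s,\mu) \mapsto \int f(s,t)\,d\mu(t)$ for every bounded $\sigma(\bSigma^1_1)$-measurable $f$, and in particular for $\pi_i$.

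With joint analytic measurability in hand, the set $\{(s,\mu,t) : \pi_i(s,\mu) < \pi_i(t,\mu)\}$ is the preimage of an open set under an analytically measurable map into $\mathbb{R}^2$, so it lies in $\sigma(\bSigma^1_1) \subseteq \bDelta^1_2$. Projecting out $t$ gives a $\bSigma^1_2$ set, and taking complements yields $\text{E}^G_i \in \bPi^1_2$. Applying Theorem 7.8(2) under $\bPi^1_3$-Determinacy gives $\text{RAT}(\text{E}^G) \in \bSigma^1_3$, and then Proposition 7.7(3) delivers $\text{RAT}(\text{E}^G) = \text{RCBR}(\text{E}^G)$. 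The translation back to the original game as in Corollary 3.7 completes the proof.

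The main obstacle is the joint measurability step in the second paragraph. The hypothesis on $\pi_i$ only controls its preimages as a function on $X_1 \times X_2$, whereas the downstream machinery needs control of the extended map on $X_i \times \Delta(X_j)$. Lemma 7.1 together with the unconditional $\bSigma^1_1$ half of Lemma 7.5 supplies the two base cases needed, so the remaining monotone-class induction is routine and consumes no additional descriptive set-theoretic strength; the $\bPi^1_3$-Determinacy hypothesis is spent entirely on the subsequent appeals to Theorem 7.8(2) and Proposition 7.7(3).
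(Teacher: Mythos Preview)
Your overall route matches the paper's: show $\text{E}^G \in \bPi^1_2$, apply Theorem 7.8(2) to get $\text{RAT}(\text{E}^G)\in\bSigma^1_3$, and finish with Proposition 7.7(3). The paper in fact just asserts $\text{E}^G\in\bPi^1_2$ for analytically measurable payoffs without argument, so you are supplying detail it omits.

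There is, however, a gap in the joint-measurability step you flag as the main obstacle. Knowing only that $\{(s,\mu):\mu(A_s)=1\}$ and $\{(s,\mu):\mu(A_s)>0\}$ are analytic does \emph{not} yield $\sigma(\bSigma^1_1)$-measurability of $(s,\mu)\mapsto\mu(A_s)$: a monotone-class argument cannot manufacture the intermediate level sets from the two extremes, and you need $\mathbf{1}_A$ itself (i.e.\ all level sets $\{(s,\mu):\mu(A_s)>r\}$ in $\sigma(\bSigma^1_1)$) before the functional monotone-class induction can even begin on the generating $\pi$-system of analytic sets. The repair is short: for $r>0$ one checks that $\mu(A_s)\geq r$ holds iff there exists $\nu\in\Delta(X_j)$ with $r\nu\leq\mu$ (a closed condition) and $\nu(A_s)=1$ (analytic by Lemma 7.1), so each $\{(s,\mu):\mu(A_s)\geq r\}$ is analytic as a projection; alternatively, this is handled directly in \cite{shreve}, Chapter 7. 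With that patch the monotone-class step and the remainder of your argument go through and agree with the paper.
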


\begin{proof}  In this case, the best response relation is co-analytic so the result follows from (3) of Theorem 7.6 together with part (2) from proposition 7.9.  In fact, if the payoffs from $\text{G}$ are analytically measurable (equivalently the payoffs have analytic graphs) then the best response relation $\text{E}^{\text{G}} \in \bPi^1_2$.  And the analysis of part (2) of Theorem 7.9 shows that $\text{RAT}(\text{E})$ is $\bSigma^1_3$ so the conclusion follows from part (3) of Theorem 7.6 as well.  \end{proof}

\begin{rem}  Note that by (2) of Proposition 7.6, we conclude that under $\bPi^1_3$-Determinacy, the set of rationalizable strategies in a Borel measurable game is $\bSigma^1_2$.  \end{rem}

\begin{thm}  Assume $\mathsf{PD}$.  Then Rationalizability is equivalent to Rationality and Common Belief in Rationality for any strategic form game with bounded projective payoffs and Polish strategy spaces. \end{thm}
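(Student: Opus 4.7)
The plan is to reduce this to the general projective-level machinery already developed in Theorem 7.7 and Proposition 7.6, with projective determinacy ensuring that all the hypotheses on pointclasses hold at every level of the projective hierarchy simultaneously.

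First I would verify that if each payoff function $\pi_i: X_i \times X_j \to \mathbb{R}$ is bounded and projective (i.e.\ its graph lies in $\bSigma^1_n$ for some $n$), then the best response relation
$$
\text{E}^{\text{G}}_i = (X_i \times \Delta(X_j)) \setminus \mbox{proj}_{X_i \times \Delta(X_j)} \{ (s,\mu,t) \ | \ \pi_i(s,\mu) < \pi_i(t,\mu) \}
$$
is itself projective. The key point is that for projective $\pi_i$, the strict inequality $\pi_i(s,\mu) < \pi_i(t,\mu)$ (with integration against $\mu$) can be expressed as a projective condition on $(s,t,\mu)$ by a standard computation using rational approximations to the integrals, and projecting out $t$ and taking a complement keeps us in the projective hierarchy. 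Pick $m$ so that $\text{E}^{\text{G}} \in \bPi^1_{2m}$ (enlarging $m$ if necessary).

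Next, I would appeal to the general statement at the end of Theorem 7.7: under $\bPi^1_{2m+1}$-determinacy, any $\bPi^1_{2m}$ relation $\text{E}$ has $\text{RAT}(\text{E}) \in \bSigma^1_{2m+1}$. Since $\mathsf{PD}$ gives $\bPi^1_k$-determinacy for every $k$, this applies to $\text{E}^{\text{G}}$ at the appropriate level. Then I would invoke the general statement at the end of Proposition 7.6: under $\bPi^1_{2m+1}$-determinacy, if $\text{RAT}(\text{E}) \in \bSigma^1_{2m+2}$ (which holds since $\bSigma^1_{2m+1} \subseteq \bSigma^1_{2m+2}$) then $\text{RAT}(\text{E}) = \text{RCBR}(\text{E})$. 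Translating back to the game via Corollary 3.7, this gives $\text{RAT}(\text{G}) = \text{RCBR}(\text{G})$.

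The main obstacle is really bookkeeping rather than any new idea: one must check that the complexity estimate $\text{E}^{\text{G}} \in \bPi^1_{2m}$ is correct for a given projective payoff, and that at whatever level $n$ one ends up at, $\mathsf{PD}$ supplies both (i) the measurable uniformization needed in the lifting step of the proof of Theorem 6.6 (via Lemma 7.3 extended through all projective levels) and (ii) the fact that the operator $A \mapsto \{\mu \ | \ \mu(A)=1\}$ preserves the relevant projective pointclass (via Lemma 7.5). Since Lemmas 7.3 and 7.5 and Theorem 7.7 are already formulated in a way that scales uniformly through the projective hierarchy under the assumed determinacy, no new argument is required beyond noting that $\mathsf{PD}$ makes all the hypotheses available at every even level $2m$ simultaneously, so we may choose $m$ large enough to accommodate the complexity of the given payoffs.
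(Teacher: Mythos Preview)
Your proposal is correct and follows exactly the same route as the paper: place the best response relation in some $\bPi^1_{2m}$, apply the general clause of Theorem~7.8 (which you label 7.7, presumably miscounting the shared remark numbering) to get $\text{RAT}(\text{E}) \in \bSigma^1_{2m+1}$, and then invoke the general clause of Proposition~7.7 (your 7.6) to conclude $\text{RAT}(\text{E}) = \text{RCBR}(\text{E})$. The paper's proof is three sentences and contains no idea beyond what you have outlined.
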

\begin{proof}  For such a game $G$, there is $n$ such that $\text{E}^{\text{G}}$ is $\bPi^1_{2n}$. By Theorem 7.8 then $\text{RAT}(\text{E}) \in \bSigma^1_{2n+1}$.  The result follows from Proposition 7.7.
\end{proof}

Finally, we show that there is a Baire Class 2 payoff function on $\mathbb{R}^2$ with complete co-analytic best response relation.

\begin{prop}  There is a bounded Borel measurable function $$\pi: \mathbb{R} \times \mathbb{R} \rightarrow \mathbb{R}$$ with best response relation $$\text{E}^f = \{ (s,\mu) \in \mathbb{R} \times \Delta(\mathbb{R}) \ | \ \forall t \in X \ \pi(s,\mu) \geq \pi(t,\mu) \}$$ that is not Borel.  Moreover, $\pi$ can be Baire Class Two. \end{prop}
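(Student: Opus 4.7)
My plan is to exhibit a bounded Borel (indeed, Baire class $2$) payoff function $\pi: \mathbb{R} \times \mathbb{R} \to \mathbb{R}$ and a specific point $s_0 \in \mathbb{R}$ such that the set $\{y \in \mathbb{R} : (s_0, \delta_y) \in \text{E}^{\pi}\}$ is already non-Borel, which forces $\text{E}^{\pi}$ itself to be non-Borel.

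First, I would fix any analytic non-Borel set $A \subseteq \mathbb{R}$ and write it as $A = h(\mathbb{N}^{\mathbb{N}})$ for some continuous $h: \mathbb{N}^{\mathbb{N}} \to \mathbb{R}$. Next, fix a homeomorphism $\varphi$ of $\mathbb{N}^{\mathbb{N}}$ onto a proper $G_{\delta}$ subset $G$ of $\mathbb{R}$ (say $G = \mathbb{R} \setminus \mathbb{Q}$), set $g = h \circ \varphi^{-1}: G \to \mathbb{R}$, and take the graph
\[
C = \{(x, g(x)) : x \in G\} \subseteq \mathbb{R}^{2}.
\]
Being the graph of a continuous function on a $G_\delta$ domain, $C$ is closed in $G \times \mathbb{R}$ and thus $G_\delta$ in $\mathbb{R}^2$, with $\mbox{proj}_1(C) = G$ and $\mbox{proj}_2(C) = A$. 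I then set $\pi = \chi_C$; this is evidently bounded and Borel, and since $C$ is $G_\delta$ the indicator $\pi$ is the pointwise infimum of indicators of a decreasing sequence of open sets (each such indicator being lower semi-continuous, hence Baire class $1$), so $\pi$ is of Baire class $2$.

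The verification is then direct. Fix any $s_0 \in \mathbb{R} \setminus G$, so that $(s_0, y) \notin C$ for every $y$, and hence $\pi(s_0, \delta_y) = 0$ identically in $y$. For the best response condition at $(s_0, \delta_y)$, I claim
\[
(s_0, \delta_y) \in \text{E}^{\pi} \iff \forall t\ \pi(t, \delta_y) \leq 0 \iff y \notin \mbox{proj}_2(C) = A.
\]
Since the map $y \mapsto (s_0, \delta_y)$ is a continuous injection $\mathbb{R} \to \mathbb{R} \times \Delta(\mathbb{R})$, its preimage of $\text{E}^{\pi}$ is exactly $\mathbb{R} \setminus A$, which is not Borel. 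Hence $\text{E}^{\pi}$ is not Borel.

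The one conceptual point --- and the reason one cannot simply take $C$ closed --- is that projections of closed subsets of $\mathbb{R}^2$ are $\sigma$-compact and therefore $F_\sigma$, and so cannot furnish a non-Borel analytic projection. Embedding $\mathbb{N}^{\mathbb{N}}$ as a $G_\delta$ in $\mathbb{R}$ is exactly what gives $\mbox{proj}_2(C)$ its full analytic complexity, at the unavoidable cost of raising $\pi$ from Baire class $1$ to class $2$. This matches the sharpness suggested by Theorem 7.9: the single-limit representation available in class $1$ powers the compactness/finite-intersection argument that made $\text{E}^{\pi}$ Borel there, whereas the double limit forced at class $2$ does not.
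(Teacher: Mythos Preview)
Your proof is correct and follows essentially the same strategy as the paper's: both take the indicator of a $G_\delta$ subset $C\subseteq\mathbb{R}^2$ whose second projection is analytic but not Borel, and detect non-Borelness of $E^\pi$ by evaluating at Dirac measures. The paper routes this through the canonical tree set $\{(x,T):x\in[T]\}$ and a reduction from $\mathrm{WF}$, thereby obtaining the stronger conclusion that $E^\pi$ is $\bPi^1_1$-complete; your construction (taking $C$ to be the graph of a continuous map from $\mathbb{R}\setminus\mathbb{Q}$ onto a prescribed analytic set) is shorter and more self-contained but yields only non-Borelness---which is exactly what the proposition asks. Your closing remark about why a closed $C$ cannot work, and how this forces the jump from Baire class~$1$ to class~$2$, nicely isolates the same obstruction that the paper handles by passing from Borel isomorphisms to the homeomorphisms $\omega^\omega\cong\mathbb{R}\setminus\mathbb{Q}$.
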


\begin{proof}  As observed in proof of Theorem 7.9, if $\pi$ is a Borel map on $X \times Y$ where each $X,Y$ are Polish,  then the best response relation $E^{\pi}$ is co-analytic. We give an example showing that $E^{\pi}$ can be complete analytic, and hence not Borel, for a Borel map $\pi$.
Let $\text{TR}$ be the Polish space of trees, regarded as a closed subspace of the Cantor space $2^{\omega^{<\omega}}$.  Let $\chi$ be the characteristic function of the set $$C = \{ (x,T) \in \omega^{\omega} \times \text{TR} \ | \ x \in [T] \}$$ where $[T]$ is the set of infinite branches of $T$.  The set $C$ is closed so $\chi$ is Baire class 1.  The best response relation - we look at pure strategies first - is $$E^{\chi} = \{ (x,T) \ | \ \forall y \in \omega^{\omega} \ \chi(x,T) \geq \chi(y,T) \} = (\omega^{\omega} \times \text{WF}) \cup C,$$ which is easily see to reduce $\text{WF}$.  To see this, given a tree $T \in \text{TR}$ we can map $T$ continuously to $(x_T',T')$ where $x_T' \notin [T']$ which gives the desired reduction.  Simply replace $T$ by $T' = \{ t' \ | \ t \in T \}$ where $t'(k) = t(k)+1$ for $k \in \mbox{dom}(t)$ and map $T$ to $(t_0,T')$ where $t_0$ is the constant $0$ branch.  Now we can use Borel isomorphisms $\phi: \omega^{\omega} \rightarrow \mathbb{R}$ and 
$\psi: \text{TR} \rightarrow \mathbb{R}$ and define $f$ on $\mathbb{R}^2$ by $$f(\psi(x),\phi(T)) = \chi(x,T).$$  Thus $f$ is Borel with complete co-analytic best-response set.  To see this, let 
$$E^{f} = \{ (x,\mu) \ | \ \forall y \in \mathbb{R} \ f(x,\mu) \geq f(y,\mu) \}$$
be the best response set.  We can form the map $$g: \text{TR} \rightarrow \mathbb{R} \times \Delta(\mathbb{R})$$
defined by $g(T) = (\phi(x),\delta_{\psi(T)})$ where $x \notin [T]$ and $\delta_{\psi(T)}$ is the measure concentrating on the singleton $\{ \psi(T) \}$.  As before $g(T) \in E^{f}$ if and only if $T \in \text{WF}$ (if $T \notin \text{WF}$ then $\phi(y)$ is a better response than $\phi(x)$ for any $y \in [T]$) so $E^{f}$ is complete coanalytic.  Note that the reduction map $g$ is Borel, which suffices by a result of Kechris \cite{kechris2}.  Thus, there is an obstruction to pushing the argument of Theorem 7.9 to all Borel games.  

Finally, we note that with a little more work we can make $f$ a Baire Class 2 map.  To see this note that $$\text{TR}^* = \{ T \in \text{TR} \ | \ \mbox{ T is infinite } \}$$ is homeomorphic to $\mathbb{R} \setminus \mathbb{Q}$ and so is $\omega^{\omega}$.  This uses the Brouwer characterization of the Cantor space (see 7.4 of \cite{kechris}) and the Alexandrov-Urysohn characterization of the Baire space (see 7.7 of \cite{kechris}).  Moreover $$C \subset \omega^{\omega} \times \text{TR}^{*}$$ so we can push $C$ to $$C' \subset (\mathbb{R} \setminus \mathbb{Q}) \times (\mathbb{R} \setminus \mathbb{Q})$$ via these homeomorphisms $\phi,\psi$: $$C' = \{(\phi(x),\psi(T)) \ | \ (x,T) \in C\}.$$  Thus $C'$ is a $G_{\delta}$ in $\mathbb{R} \times \mathbb{R}$ as the intersection of a closed set with a $G_{\delta}$ so 
its characteristic function $f$ is Baire class 2.  The best response relation $E^{f}$ is complete coanalytic as witnessed by the reduction given earlier.  \end{proof}

\begin{rem}  As mentioned in the introduction, results from \cite{cz} show that Iterated Dominance is more complex than Rationalizability in integer games (as the prototypical non-compact case).  We observe here that for a continuous game $\text{G}$ with Polish strategy spaces, the sets $\text{IU}_i(\text{G})$ are $\bPi^1_2$, in contrast to $\text{RAT}(\text{G}) = \text{RCBR}(\text{G})$ which is $\bSigma^1_1$ as we have seen.   The point is that $\bSigma^1_2$ is a ranked class.  The set $P$ of pairs $(q,x)$ such that $x$ is strictly dominated in $\text{G}$ on the set $$\{y \ | \ (q,y) <^{*}_{\psi} (q,x) \}$$ where $\psi$ is a $\Sigma^1_2$ norm on the universal $\bSigma^1_2$ set $U \subset 2^{\omega} \times X$, is $\bSigma^1_2$ on $\bSigma^1_2$ in the sense of \cite{kechris}.  Thus, we conclude that each $X_i \setminus \text{IU}(\text{G})_i$ is $\bSigma^1_2$ as in Theorem 7.8.  \end{rem}

\section{Counterexample Under The Continuum Hypothesis}

\begin{lem}  Let $X$ be a Polish space and $\mu \in \Delta(X)$.  Let $A$ be the set of isolated points of $X$. Then if there is a meager set $H$ with $\mu(H) = 1$ if and only if $\mu(A) = 0$.  \end{lem}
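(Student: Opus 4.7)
The plan is to handle the two directions separately. For the easy forward direction I will use the observation that no meager set can contain an isolated point. For the nontrivial reverse direction, under the hypothesis $\mu(A) = 0$, I will decompose $\mu$ into its atomic and continuous parts, handle the atoms by observing they are automatically non-isolated (hence live in a meager set trivially), and dispose of the continuous part by constructing a dense $G_\delta$ of measure zero.

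For the forward direction, suppose $H \subseteq X$ is meager with $\mu(H) = 1$, and let $x \in A$ be any isolated point. Write $H = \bigcup_n N_n$ with each $N_n$ nowhere dense. If $x \in N_k$ for some $k$, then $\{x\} \subseteq \overline{N_k}$; since $\{x\}$ is open in $X$ (as $x$ is isolated), $\overline{N_k}$ would have nonempty interior, contradicting nowhere dense. Thus $A \subseteq X \setminus H$, giving $\mu(A) \leq \mu(X \setminus H) = 0$.

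For the reverse direction, assume $\mu(A) = 0$ and decompose $\mu = \mu_B + \nu$, where $B$ is the countable set of atoms of $\mu$ and $\nu$ is the atomless part. Since $A$ is countable and $\mu(A) = 0$, we have $\mu(\{x\}) = 0$ for every $x \in A$, so $B \cap A = \emptyset$. Hence every $b \in B$ is non-isolated, so $\overline{\{b\}} = \{b\}$ has empty interior, making $\{b\}$ nowhere dense and $B$ itself meager. It remains to absorb the $\nu$-mass. Choose a countable dense set $D = \{d_n\} \subseteq X$ (separability). Because $\nu$ has no atoms, $\nu(\{d_n\}) = 0$ for each $n$, and outer regularity of the Borel measure $\nu$ on the Polish space $X$ produces, for each $k$ and $n$, an open set $V_n^k \ni d_n$ with $\nu(V_n^k) < 2^{-k-n}$. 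Set $U_k = \bigcup_n V_n^k$, which is open, dense (contains $D$), and satisfies $\nu(U_k) < 2^{1-k}$. Then $G = \bigcap_k U_k$ is a dense $G_\delta$ with $\nu(G) = 0$, so $X \setminus G$ is meager. Taking $H = B \cup (X \setminus G)$ gives a meager set with
$$\mu(X \setminus H) = \mu(G \setminus B) = \nu(G) = 0,$$
so $\mu(H) = 1$ as required.

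The main subtlety is the decomposition step: the hypothesis $\mu(A) = 0$ is used precisely to ensure every atom of $\mu$ is non-isolated, so that $B$ can be placed into the meager set directly rather than being forced to lie in the dense $G_\delta$. Without this, one could not hope to produce a meager full-measure set, since any meager set must miss every isolated point.
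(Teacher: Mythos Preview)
Your proof is correct and rests on the same core idea as the paper's: the Oxtoby construction of a dense $G_\delta$ of arbitrarily small measure, obtained by covering a countable dense set with small open neighborhoods. The only real difference is in the bookkeeping for atoms. The paper passes to the closed subspace $Y = X \setminus A$, chooses a countable dense subset $\{p_n\}$ of $Y$ that avoids the atoms of $\mu$, and runs the Oxtoby construction for $\mu$ itself; you instead split $\mu = \mu_B + \nu$ into its atomic and atomless parts, throw the countable set $B$ of (necessarily non-isolated) atoms directly into the meager set, and apply Oxtoby to the atomless part $\nu$ using an arbitrary countable dense subset of $X$. Your variant has the minor advantage that it does not rely on being able to choose a dense subset of $Y$ that dodges every atom, a step that is not entirely automatic when $Y$, viewed as a subspace, has isolated points that happen to be atoms of $\mu$.
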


\begin{proof}  See also Lemma 4.1 of \cite{prikry} adapting Thm. 1.6 of \cite{oxtoby}.  If $\mu(A) > 0$ then no meager set can have full measure.  For the converse, let $Y = X \setminus A$ which is closed and hence Polish.  The atoms of $\mu$ must lie in Y, that is they are non-isolated points of $X$, so 
we can choose $\{ p_n \ | \ n<\omega \} \subset Y$ dense in $Y$ with $\mu(\{p_n\}) = 0$ for each $n < \omega$.
Now we can work the construction from Oxtoby.  Let $U_{n,k}$ be an open neighborhood of $p_{n}$ with $\mu(U_{n,k}) < 2^{-n-k-2}$.  The desired meager set is 
$$ H = \bigcup_{k<\omega} (Y \setminus \bigcup_{n<\omega} U_{n,k} )$$ which has measure one by design.\end{proof}

\begin{thm}  Assume the Continuum Hypothesis.  For any perfect Polish spaces $X_i$ there are universally measurable relations $\text{E}_i \subset X_i \times \Delta(X_j)$ such that $$\text{RAT}(\text{E}) \neq \text{RCBR}(\text{E}).$$  \end{thm}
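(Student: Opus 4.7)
The plan is to reduce, via the analysis behind Theorem 6.6, to constructing a universally measurable relation $\text{E}_1 \subseteq X_1 \times \Delta(X_2)$ whose projection to $X_1$ is all of $X_1$ and for which no Borel probability on $X_1 \times \Delta(X_2)$ concentrates on $\text{E}_1$ with a prescribed atomless marginal on $X_1$. Paired with the trivial relation $\text{E}_2 = X_2 \times \{\mu\}$ for an atomless $\mu \in \Delta(X_2)$, this will give $\text{RAT}(\text{E})_1 = X_1$ while $\text{RCBR}(\text{E})_1 = \emptyset$, witnessing the desired inequality.

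First I verify the reduction. With $\text{E}_2 = X_2 \times \{\mu\}$, the iteration of Remark 3.2 gives $\text{RAT}(\text{E})_2 = X_2$ and $\text{RAT}(\text{E})_1 = \mbox{proj}_{X_1}(\text{E}_1) = X_1$. Unwinding the local form of RCBR from Definition 4.6 and Lemma 4.8, any type $\delta_2$ for player $2$ witnessing level-two RCBR must have $\delta_2^1 = \mu$ together with $\delta_2^2(\text{E}_1) = 1$ and $\mbox{marg}_{X_1}(\delta_2^2) = \mu$. If no such $\delta_2^2$ exists, then $\text{RCBR}(\text{E})^2_2 = \emptyset$, and consequently $\text{RCBR}(\text{E})^3_1 = \emptyset$, since a level-three type $\delta_1$ for player $1$ would require $\lambda_1(\delta_1)$ to concentrate on the empty set. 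Hence $\text{RCBR}(\text{E})_1 = \emptyset$, while $\text{RAT}(\text{E})_1 = X_1$.

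Under $\mathsf{CH}$ I construct $\text{E}_1$ by Luzin-style transfinite recursion. Enumerate $X_1 = \{x_\alpha : \alpha<\omega_1\}$ and the continuum-many Borel probability measures on $X_1 \times \Delta(X_2)$ as $(\rho_\alpha)_{\alpha<\omega_1}$. For each $\alpha$, disintegrate $\rho_\alpha$ over its $X_1$-marginal to get conditional measures $\eta_\alpha^x$ on the perfect Polish space $\Delta(X_2)$, and apply the explicit construction in the proof of Lemma 8.1 fiberwise to produce a Borel set $M_\alpha \subseteq X_1 \times \Delta(X_2)$ with $\rho_\alpha(M_\alpha) = 1$ and each fiber $(M_\alpha)_x$ meager in $\Delta(X_2)$. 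At stage $\alpha$, the countable union $\bigcup_{\beta<\alpha} (M_\beta)_{x_\alpha}$ is meager in the perfect Polish space $\Delta(X_2)$, so by Baire category its complement is nonempty; pick any $\nu_\alpha$ there and set $\text{E}_1 = \{(x_\alpha,\nu_\alpha) : \alpha<\omega_1\}$. Then $\mbox{proj}_{X_1}(\text{E}_1) = X_1$, and for every $\alpha$ the intersection $\text{E}_1 \cap M_\alpha \subseteq \{(x_\beta,\nu_\beta) : \beta \leq \alpha\}$ is countable, hence $\rho_\alpha$-null once the countably many atoms of $\rho_\alpha$ are absorbed into $M_\alpha$; so $\rho_\alpha(\text{E}_1) = 0$ for every $\alpha$. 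Thus $\text{E}_1$ is universally null, hence universally measurable, and defeats every candidate measure extension.

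The main obstacle is manufacturing each $M_\alpha$ with meager fibers at every single $x \in X_1$, not just on a $\rho_\alpha$-conull slice. A naive application of Lemma 8.1 directly to $\rho_\alpha$ on the product only produces a meager set whose fibers, by Kuratowski-Ulam, need be meager only on a comeager set of $x$; this is insufficient because the diagonalization at stage $\alpha$ demands meagerness of $(M_\beta)_{x_\alpha}$ at the specific point $x_\alpha$ for every $\beta<\alpha$. The remedy is to execute the construction in the proof of Lemma 8.1 at the level of each conditional $\eta_\alpha^x$, using Borel regularity of the disintegration together with a measurable selection for the countable dense sequence and the shrinking open neighborhoods. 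This selection step is the technically delicate part of the argument; once it is in hand, the rest of the recursion and the final verification are routine.
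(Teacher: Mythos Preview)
Your overall strategy is correct and matches the paper's: a Luzin--Sierpi\'nski diagonalization under $\mathsf{CH}$ producing a universally null graph $\text{E}_1$ that defeats every measure extension of a fixed atomless $\mu$, paired with the trivial $\text{E}_2 = X_2 \times \{\mu\}$ (this asymmetric setup is exactly the reduction in the forward direction of Theorem~6.6; the paper's own proof of Theorem~8.2 builds both $\text{E}_i$ symmetrically, but the content is the same). One small slip: since $\text{E}_2 \subseteq X_2 \times \Delta(X_1)$, the fixed measure $\mu$ lives in $\Delta(X_1)$, not $\Delta(X_2)$.

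Where you diverge from the paper, and make your life harder, is in the construction of $M_\alpha$ with meager fiber at \emph{every} $x \in X_1$. The disintegration and fiberwise measurable selection you sketch can presumably be carried out, but it is unnecessary. Simply apply Lemma~8.1 to the $\Delta(X_2)$-marginal of $\rho_\alpha$: this is a Borel probability measure on the perfect Polish space $\Delta(X_2)$ and hence concentrates on a single meager $F_\sigma$ set $H_\alpha \subseteq \Delta(X_2)$. Then $M_\alpha = X_1 \times H_\alpha$ has $\rho_\alpha$-measure one and constant fiber $H_\alpha$ at every $x$, with no selection argument required. This is exactly what the paper does: it enumerates the meager subsets $H^j_\alpha$ of $\Delta(X_j)$ directly rather than the measures $\rho_\alpha$, chooses $f^i(x^i_\alpha) \notin \bigcup_{\beta<\alpha} H^j_\beta$ by Baire category, and at the verification stage applies Lemma~8.1 once to the $\Delta(X_i)$-marginal of the offending measure $\nu$ to locate an $H^i_\alpha$ with $\nu(X_j \times H^i_\alpha)=1$, whence $\nu$ concentrates on the countable set $\text{E}_j \cap (X_j \times H^i_\alpha)$ and so has an atom, contradicting the atomlessness of its $X_j$-marginal. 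Your ``technically delicate'' step disappears entirely.
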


\begin{proof}  For each $i$ let $H_{\alpha}^i$ enumerate the meager subsets of $\Delta(X_i)$.  Let $x_{\alpha}^i$ be an enumeration of $X_i$.  By induction we define $$f^i: X_i \rightarrow \Delta(X_j)$$ as follows.  Let $f^i(x_0^i) \in \Delta(X_j^0)$ be an atomless measure and for each $\alpha < \omega_1$ let 
$$f(x_{\alpha}^i) \notin \bigcup_{\beta < \alpha} H_{\beta}^j \cup \{ f^i(\beta) \}.$$  This is possible by the Baire Category Theorem.  We have 
$$\text{E}_i \cap (X_i \times H_{\alpha}^j)$$ is countable for every $\alpha < \omega_1$ where $\text{E}_i$ denotes the graph of $f^i$ for each $i$.
Thus $\mu( \text{E}_i) =0$ whenever $\mu \in \Delta(X_i \times \Delta(X_j))$ is an atomless measure so $\text{E}_i$ is a universally null and hence universally measurable subset of $X_i \times \Delta(X_j)$.  We now argue that $\text{E}$ is the desired counterexample.  First note that $\text{RAT}(\text{E})_i = X_i$, which follows from the fact that $f$ is a total map.  We claim that strategy $x = x^i_0$ is not consistent with $\text{RCBR}(\text{E})$.  Suppose to the contrary that there is a type $t(x)$ such that 
$$\mbox{marg}_{X_j}\lambda_i(t(x)) = f^i(x),$$ and 
$$\lambda_i(t(x)) ( \{ (y,t) \ | \ \mbox{marg}_{X_i} \lambda_j(t) = f^j(y) \} ) =  \lambda_i(t(x)) (\text{RCBR}(\text{E})_j^1   )   =  1.$$  

\noindent Let $\psi:X_j \times T_j \rightarrow X_j \times \Delta(X_i)$ be given by $$\psi(s,t) = (s, 
\mbox{marg}_{X_i} \lambda_j (t)),$$ which is continuous, and define $\nu \in \Delta(X_j \times \Delta(X_i))$ by $$\nu(S) = \lambda_i(t(x)) (\psi^{-1}(S))$$ for Borel sets $S \subseteq X_j \times \Delta(X_i).$  Then $\nu(E_j) = 1$ and $\mbox{marg}_{X_j}(\nu) = f^i(x)$.  The key point is that by Lemma 8.1 there is some $\alpha$ such that $$\nu(X_j \times H^i_{\alpha}) = 1.$$  Thus $\nu$ concentrates on a countable set, namely $$E_j \cap (X_j \times H^i_{\alpha}),$$ so $\nu$ must have an atom.  But then $f^i(x)$ must have an atom which is the desired  contradiction. \end{proof}

\begin{rem}  It is easy to see that if $\mathsf{V=L}$ holds then there is $\text{E}$ as above which is $\Delta^1_2$.  In contrast, Theorem 7.6 rules out such a counterexample for $\Sigma^1_1$ relations.  We do not know if there is a $\Pi^1_1$ counterexample.  \end{rem}

\begin{thm}  Assume the Continuum Hypothesis.  Then there is a game with Polish strategy spaces and with universally measurable best response relation that contains a rationalizable strategy which is not consistent with rationality and common belief in rationality.  \end{thm}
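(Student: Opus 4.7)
The plan is to promote the relational counterexample of Theorem 8.2 to an honest strategic-form game. Let $X_i$, $f^i:X_i \to \Delta(X_j)$, and $x^1_0 \in X_1$ be the spaces, functions, and distinguished strategy produced in the proof of Theorem 8.2, so that $\text{RAT}(\text{E}) = X_1 \times X_2$ while $x^1_0 \notin \text{RCBR}(\text{E})_1$, where $\text{E}_i = \mathrm{graph}(f^i)$ is universally null.

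My approach is to design bounded payoff functions $\pi_i: X_i \times X_j \to \mathbb{R}$ whose best-response relation $\text{E}^G_i$ essentially recovers $\text{E}_i$. A natural candidate is a strictly proper scoring-rule payoff: fix a bounded Borel strictly proper scoring rule $S: \Delta(X_j) \times X_j \to [0,1]$ and set $\pi_i(s,t) = S(f^i(s), t)$, so that strict properness forces $\mu \mapsto \int S(f^i(s),t)\, d\mu(t)$ to be maximized over $s$ precisely when $\mu = f^i(s)$, provided $f^i$ is injective. Injectivity can be folded into the transfinite induction of Theorem 8.2 at essentially no cost. With this choice, $\text{E}^G_i$ coincides with $\mathrm{graph}(f^i) = \text{E}_i$ and is therefore universally null.

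Given the game $G = (\pi_1,\pi_2)$, verification closely follows Theorem 8.2. Every strategy is rationalizable because $f^i(s)$ witnesses $s$ as a best response and $f^i(s)$ trivially concentrates on $\text{RAT}(G)_j = X_j$. The failure of RCBR at $x^1_0$ reproduces the argument of Theorem 8.2 verbatim: any witnessing type $t$ would have $\mbox{marg}_{X_j}\lambda_1(t) = f^1(x^1_0)$, which is atomless; Lemma 8.1 then forces the induced pushforward $\nu \in \Delta(X_2 \times \Delta(X_1))$ onto $\text{E}_2$ to concentrate on a countable subset $\text{E}_2 \cap (X_2 \times H^1_\alpha)$, producing an atom in $\nu$ and contradicting atomlessness of the $X_2$-marginal.

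The hard part will be ensuring universal measurability of $\pi_i$. Since $f^i$ is produced by diagonalization, it need not be universally measurable as a function, and the composition $\pi_i(s,t) = S(f^i(s), t)$ is therefore not automatically universally measurable. I would handle this by strengthening the induction of Theorem 8.2 in one of two ways: either commit enough $f^i$-preimages of Borel subsets of $\Delta(X_j)$ stagewise to force $f^i$ to be universally measurable as a function, while still satisfying the diagonal meager-avoidance clauses; or interleave the construction of $\pi_i$ and $f^i$ directly, prescribing $\pi_i$ on successive vertical slices $\{x^i_\alpha\} \times X_j$ so as to guarantee universal measurability of the payoff by design. Either way, simultaneously reconciling injectivity of $f^i$, universal measurability of $\pi_i$, and the diagonal meager-avoidance that drives the atomlessness obstruction is the technical core of the argument.
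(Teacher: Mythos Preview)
Your scoring-rule construction has a genuine gap at the step ``$E^G_i = \mathrm{graph}(f^i)$.'' Strict properness says that for each fixed $\mu$ the functional $\nu\mapsto\int S(\nu,t)\,d\mu(t)$ is maximized over \emph{all} of $\Delta(X_j)$ uniquely at $\nu=\mu$; it does not say the supremum over the proper subset $\mathrm{range}(f^i)$ is unattained when $\mu\notin\mathrm{range}(f^i)$. For such $\mu$ there may well be some $s$ that is a best response, so in general only $\mathrm{graph}(f^i)\subseteq E^G_i$ is clear. This breaks the RCBR-failure step: the pushforward $\nu\in\Delta(X_2\times\Delta(X_1))$ is only forced to concentrate on $E^G_2$, not on $\mathrm{graph}(f^2)$, and once $E^G_2$ is larger than a Luzin-type graph the universal-nullity/atom contradiction no longer fires. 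You might try to rescue this by arranging $\mathrm{range}(f^i)$ dense and using a scoring rule whose expected-score functional is weak*-continuous in the prediction (so the sup over the range is not attained off the range), but that is an additional commitment---existence of a bounded Borel strictly proper scoring rule with this continuity on a general Polish $\Delta(X_j)$, plus building density into the $\omega_1$-induction---that you have not made, and it sits on top of the measurability difficulty you already flag.

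The paper avoids both issues by a different device. It works on $X_1=X_2=[0,2]=I_0\cup I_1$ and reserves $I_1$ for a countable family of auxiliary ``detector'' strategies $x_{n,I}$, indexed by basic intervals $I\subset I_0$, whose payoffs are explicit Borel functions. These detectors are calibrated so that the strategy $0\in I_0$ is a best response to \emph{exactly} Lebesgue measure on $I_0$, and each $x\in(0,1]$ is a best response to \emph{exactly} $\delta_{\phi(x)}$, where $\phi:(0,1]\to[0,1]$ is the CH-diagonalized map of Theorem~8.2 transported to $I_0$ in place of $\Delta(X)$; the detectors themselves are strictly dominated. This pins down the best-response relation on the nose as $\{(0,\mu_0)\}\cup\{(x,\delta_{\phi(x)}):0<x\le 1\}$, and the payoff is universally measurable because its only non-Borel ingredient is $2\cdot\mathbb{1}_{\mathrm{graph}(\phi)}$, which is universally null. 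The moral is that instead of trying to realize a prescribed relation as the best-response relation of a scoring-rule game, one adds enough dominated auxiliary strategies to punish every belief except the intended one; this is exactly the control over $E^G_i$ that your approach lacks.
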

\begin{proof}  Let $I_0 = [0,1]$ and $I_1 = (1,2]$.  The strategy spaces will be $$X_1 = X_2 = X = [0,2] = [0,1] \cup (1,2] = I_0 \cup I_1.$$ Let $$a_n = 1 - \frac{1}{2^{n+1}}$$ \noindent for $n \geq 0$. Let $\mu$ denote Lebesgue measure.  We define $$\pi:[0,2] \times [0,2] \rightarrow \mathbb{R}$$ as follows.  For each $n$ and $I \subset I_0$ a basic open interval, select distinct points $x_{n,I} \in I_1$.  We will define $$\pi(x_{n,I},y) = a_n \pi_{I} (y)$$ for $y \in X$ where 

$$\pi_I(y) =
\begin{cases}
2- \mu(I)  &  y \in I\\
1- \mu(I) & y \notin I   \\
\end{cases}$$

\noindent For $x \in I_1$ that is not of the form $x_{n,I}$ we set $\pi(x,y) = 0$ for $y \in X$.  Now, given a map 
$$\phi: (0,1] \rightarrow [0,1]$$ we define $\pi(x,y)$ for $x \in I_0$ and any $y \in [0,2]$ by 

$$\pi(x,y) =
\begin{cases}
1  &  x = 0 \wedge y \in I_0 \\
0  &  x = 0 \wedge y \in I_1 \\
0 & x > 0 \wedge y \neq \phi(x)   \\
2 & x > 0 \wedge y = \phi(x)   \\
\end{cases}$$

\noindent This completes the definition of the payoff function $\pi$ on $X_1 \times X_2$ relative to the unspecified function $\phi$.  We define the payoff for the other player so that the game is symmetric.   We now determine which strategies are rationalizable.  Let $\mu_0 \in \Delta(I_0)$ denote Lebesque measure restricted to the unit interval.

\begin{claim}  The strategy $0$ is a best response to $\mu_0$, and to no other measure. 
\end{claim}
\begin{proof} First of all, we note that $\pi(0,
\mu_0) = 1$ and $\pi(x_{n,I},\mu_0) = a_n < 1$.  For any other $x$ we have $\pi(x,\mu_0) = 0$.  Now assume $\nu$ is a measure under which $0$ is a best response and $\nu \neq \mu_0$.  Without loss of generality, $\nu$ concentrates in $I_0$, that is, $\nu \in \Delta(I_0)$.   Assume first that $\nu$ is continuous.  There must be a basic basic open interval $I \subset I_0$ with $\mu_{0}(I) < 1$ such that $\mu_0(I) < \nu(I)$.  It follows that for $n$ sufficiently large $$1 = \pi(x_{n,I},\mu_0) < \pi(x_{n,I},\nu) = a_n (1 + \nu(I) - \mu(I)),$$ a contradiction.  Similarly, if $\nu$ has an atom $r \in I_0$ then we have $x_{n,I}$ is a better response to $\nu$ than $0$ for $I$ a sufficiently small neighborhood of $r$ and $n$ sufficiently large.  \end{proof}

\begin{claim}  Each strategy $x \in I_0$ with $x > 0$ is a best response to $\delta_{\phi(x)}$ and no other 
measure.
\end{claim}

\begin{proof}  We have $\pi(x,\delta_{\phi(x)}) = 2$ which is the global maximum value of $\pi$ so necessarily $x$ is a best response.  For any other measure $\nu$, that is $\nu \neq \delta_{\phi(x)}$, we necessarily have $\nu(\{\phi(x) \})<1$ so as $\phi(x) \in I_0$ we can find a $x_{n,I}$ with $\pi(x,\nu) < \pi(x_{n,I},\nu)$, by taking $I$ to be a basic open interval around $\phi(x)$ of sufficiently small measure and by taking $n$ sufficiently large.  \end{proof}

Finally, no strategy $x \in I_1$ is a best response strategy as each such strategy is strictly dominated.  As the game is symmetric we conclude that only strategies $x \in I_0$ are rationlizable, that is $$\text{RAT} =\text{RAT}_1 \times \text{RAT}_2 = I_0 \times I_0. $$  We now claim we can choose $\phi$ so that strategy $0$ is not consistent with $\text{RCBR}$.  Define $\phi$ by the same induction as in the proof of Theorem 8.2 using $I_0 = [0,1]$ in place of $\Delta(X)$.  Let $E_i$ be the best response relation for player $i$.  Thus $$E_i = \{ (0,\mu_0) \} \cup \{ (x, \delta_{\phi(x)}) \ | \ 0 < x \leq 1 \}$$ for each $i$.  Suppose toward a contradiction that there is $\nu \in \Delta( X \times \Delta(X))$ such that $\nu(E_j) = 1$ and $$\mbox{marg}_{X}(\nu) = \mu_{0}$$  Then we must have $$\nu ( \{ (x,\delta_{\phi(x)})  \ | \ x > 0 \}) = 1,$$ and thus $\nu$ gives rise to a continuous measure $\hat{\nu} \in \Delta(X \times X)$ with $$\hat{\nu}(  \{ (x,\phi(x))  \ | \ x > 0 \} ) = 1$$ 
and $\mbox{marg}_X( \hat{\nu}) = \mu_0$, a contradiction as in the last theorem.
\end{proof}

\begin{thm}  Assume Rationalizability is equivalent to Rationality and Commmon Belief in Rationality for all games with bounded payoffs and Polish strategies spaces.  Then all sets are Lebesgue measurable.
\end{thm}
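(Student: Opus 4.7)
The plan is to argue by contrapositive: assuming some $A \subseteq [0,1]$ fails to be Lebesgue measurable, I would construct a game in the mold of Theorem 8.4 for which strategy $0$ is rationalizable but not consistent with RCBR. Writing $\mu_0$ for Lebesgue measure on $[0,1]$, I would first encode $A$ as the injection $\phi \colon (0,1] \to [0,1]$ given by $\phi(x) = x/3$ for $x \in A$ and $\phi(x) = (x+2)/3$ for $x \notin A$, so that $\phi$ sends $A$ and $A^c$ into the disjoint bands $[0,1/3]$ and $[2/3,1]$ along two Borel lines.

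I would then plug this $\phi$ into Theorem 8.4's game on $X_1 = X_2 = [0,2] = I_0 \cup I_1$, keeping the $x_{n,I}$ penalty strategies and the original payoff recipe. The arguments of Claim 1 and Claim 2 there depend only on the $\pi_I$ penalty structure and on $\phi(x)$ being a single point, not on any measurability of $\phi$, so they transfer verbatim: the rationalizable rectangle is $I_0 \times I_0$, the unique best response is $\mu_0$ at strategy $0$, and the unique best response at each $x \in (0,1]$ is $\delta_{\phi(x)}$. Each pure-strategy slice $\pi_i(s,\cdot)$ is Borel on $X_j$ (the only nontrivial case $\pi(x,\cdot) = 2\chi_{\{\phi(x)\}}$ for $x \in (0,1]$ is the indicator of a singleton), so all integrals in the definitions of rationalizability and RCBR are well defined. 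Applying the hypothesis would then produce a type $t$ witnessing RCBR at $0$, and, exactly as in the final paragraph of Theorem 8.4, I would pushforward $\lambda_i(t)$ through $(s,u) \mapsto (s, \mbox{marg}_X \lambda_j(u))$ and then the Borel isomorphism $\delta_y \leftrightarrow y$ on Dirac measures to obtain a Borel measure $\hat\nu \in \Delta([0,1]^2)$ with $\mbox{marg}_X \hat\nu = \mu_0$ concentrating on $\mbox{graph}(\phi)$.

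The decisive step extracts the contradiction from $\hat\nu$. Picking a Borel $B \subseteq \mbox{graph}(\phi)$ with $\hat\nu(B) = 1$ and splitting $B$ along $[0,1] \times [0,1/3]$ and $[0,1] \times [2/3,1]$ into Borel pieces $B_0$ and $B_1$, each piece sits on a Borel line (the graph of a Borel map $x \mapsto x/3$ or $x \mapsto (x+2)/3$), so the projections $\mbox{proj}_X(B_0)$ and $\mbox{proj}_X(B_1)$ are Borel subsets of $A$ and $A^c$ respectively whose union is $\mu_0$-conull. Therefore $A$ agrees with the Borel set $\mbox{proj}_X(B_0)$ up to a $\mu_0$-null set and is Lebesgue measurable, contradicting the choice of $A$. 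Translation and countable additivity extend this to arbitrary subsets of $\mathbb{R}$.

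The main obstacle is reconciling the construction with the paper's convention that game payoffs extend to mixed strategies by expectation: the joint function $\pi$ here need not be universally measurable on $X_1 \times X_2$, since the level set $\{\pi = 2\}$ contains $\mbox{graph}(\phi)$. I would address this by noting that the definitions of rationalizability and RCBR invoked in the argument only require Borel measurability of each pure-strategy slice $\pi_i(s,\cdot)$, which the construction does satisfy, and that the hypothesis of the theorem is stated for games with bounded payoffs and Polish strategy spaces without imposing joint measurability on $\pi$.
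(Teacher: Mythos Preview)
Your proof is correct and follows the same template as the paper's: build a Theorem~8.4-style game encoding $A$ into the best-response relation, apply the hypothesis at a strategy whose unique justifying belief is $\mu_0$ to extract a measure on $E_j$ with $X$-marginal $\mu_0$, then split $E_j$ along its $A$- and $A^c$-parts to sandwich $A$ between Borel sets. The paper's encoding is slightly simpler---it sets $\phi \equiv 1$ on $A$ and modifies the payoff on $I_0 \setminus A$ so those strategies respond only to $\mu_0$, giving $E_j = ((I_0\setminus A)\times\{\mu_0\}) \cup (A \times \{\delta_1\})$ with just two target beliefs and no need for the Dirac-to-point map or the Borel-line projection---but your two-band injective $\phi$ achieves the same Borel sandwich, and your observation about slice-wise (rather than joint) measurability sufficing applies equally to the paper's construction.
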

\begin{proof}  Let $A \subseteq [0,1]$ be an arbitrary set of reals.  Define a symmetric game as in Theorem 8.4 with payoff $\pi^A$ satisfying 

$$\pi^A(x,y) =
\begin{cases}
2  &  x \in  A  \wedge y = 1 \\
0  &  x \in  A \wedge y \neq 1 \\

1  &  x \in (I_0 \setminus A) \wedge y \in I_0 \\
0 & x \in (I_0 \setminus A) \wedge y \notin I_0    \\
\end{cases}$$

\noindent for $x \in I_0$, and $y \in I_0 \cup I_1 = [0,2]$, and exactly the same as $\pi$ from Theorem 8.4 for $x \in I_1$.  In other words, we use $\phi(x) = 1$ for $x \in A$, in the definition from Theorem 8.4.  Using the analysis there we see that 
$$E_i = \{ (x,\mu_0) \ | \ x \in (I_0 \setminus A) \} \cup 
\{(x, \delta_{1}) \ | \ x \in A \}$$ for each $i$ where $\mu_0$ is Lebesgue measure on $I_0$, and that $\text{RAT}(\text{E}) = I_0 \times I_0$.  Thus since $0$ is consistent with $\text{RCBR}$, there must be a measure 
$$\nu \in \Delta(I_0 \times \Delta(I_0))$$ with $\mbox{marg}_{I_0} (\nu) = \mu_0$ and $\nu(E_j) = 1$ as before.  It follows that $A$ is $\mu_0$-measurable as in Proposition 6.3.  There are Borel sets $B_0 \subseteq 
(I_0 \setminus A) \times \{ \mu_0\}$ and $B_1 \subset A \times \{\delta_1\}$ and such that $\nu(B_0) + \nu(B_1) = 1$.  Hence 
$$m_0(\mbox{proj}_{I_0}(B_0)) + m_0(\mbox{proj}_{I_0}(B_1)) = 1$$ from which we conclude that $A$ is $\mu_0$ measurable.
\end{proof}

\begin{thm}  The statement that rationalizable strategies are consistent with rationality and common belief in rationality in any game with bounded payoffs and Polish strategy spaces is equiconsistent with the existence of an inaccessible cardinal. \end{thm}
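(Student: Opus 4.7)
The plan is to establish equiconsistency by proving the two directions separately, each of which chains together results already established in the paper with classical set-theoretic facts about the Solovay model.

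First I would handle the easier direction: if the statement is consistent with $\mathsf{ZF}+\mathsf{DC}$, then so is the existence of an inaccessible cardinal. This is immediate from Theorem 5 above, which shows that the statement ``Rationalizability equals RCBR in every bounded Polish game'' implies that every set of reals is Lebesgue measurable. By the classical Solovay-Shelah theorem, the assertion that every set of reals is Lebesgue measurable (in a model of $\mathsf{ZF}+\mathsf{DC}$) implies that $\aleph_1$ is inaccessible in $L$, and hence that the existence of an inaccessible cardinal is consistent with $\mathsf{ZFC}$. So from a model of the statement we extract a model with an inaccessible.

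Next I would prove the reverse direction: if an inaccessible cardinal is consistent, then so is the target statement. Here I would invoke Solovay's construction: starting from a model of $\mathsf{ZFC}$ containing an inaccessible cardinal $\kappa$, force with the Levy collapse $\text{Coll}(\omega,{<}\kappa)$ and pass to the submodel $\mathsf{HOD}(\mathbb{R})^{V[G]}$ (or equivalently the Solovay model). This model satisfies $\mathsf{ZF}+\mathsf{DC}$ together with the Measure Uniformization Principle $\mathsf{MUP}$ (this is the original content of Theorem 1 of \cite{solovay}; see also \cite{rais}). Now I apply Corollary 6.8 of the present paper, which states that under $\mathsf{MUP}$ (equivalently $\mathsf{MEA}$ by Proposition 6.5), Rationalizability coincides with RCBR in every game with bounded payoffs on Polish strategy spaces. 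Hence the Solovay model is a model of $\mathsf{ZF}+\mathsf{DC}$ satisfying the statement.

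Combining the two directions yields equiconsistency. The main subtlety, which is not really an obstacle but worth flagging, is that both the statement of the theorem and its proof take place in $\mathsf{ZF}+\mathsf{DC}$ rather than full $\mathsf{ZFC}$: the forward direction of Theorem 5 produces a model of $\mathsf{ZF}+\mathsf{DC}$ in which all sets are measurable, and the reverse direction via Solovay likewise yields only $\mathsf{DC}$. This is the correct framework, since under the full Axiom of Choice one can construct counterexamples (as in Theorem 5 using $\mathsf{CH}$), and indeed the formulation of the target statement in terms of universal measurability of the best-response relation implicitly requires a set-theoretic context compatible with determinacy-type consequences. No further work is needed, as both invoked results, Corollary 6.8 and the Solovay-Shelah absoluteness, are already established.
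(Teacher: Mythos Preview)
Your proposal is correct and matches the paper's own proof essentially line for line: one direction chains Theorem 8.7 (the result that the statement implies every set of reals is Lebesgue measurable) with Shelah's theorem, and the other direction uses that $\mathsf{MUP}$ holds in the Solovay model together with Proposition 6.5 and Corollary 6.8. The only slip is a labeling one: the result you invoke as ``Theorem 5'' is actually Theorem 8.7 in the paper (Theorem 5 is the $\mathsf{CH}$ counterexample), but your description of its content is accurate, so the argument stands.
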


\begin{proof}  One direction follows from Theorem 8.7 and a Theorem of Shelah (see \cite{kanamori}).  The other directions from Proposition 6.5 and Corollary 6.8, noting that $\mathsf{MUP}$ holds in the Solovay model (see \cite{solovay}). \end{proof}


\end{document}